\newcommand{\mcal}{\mathcal}
\newcommand{\mbf}{\mathbf}
\newcommand{\mfrak}{\mathfrak}
\newcommand{\mbb}{\mathbb}
\newcommand{\mrm}{\mathrm}
\newcommand{\vphi}{\varphi}
\newcommand{\cO}{\mathcal{O}}
\newtheorem{theorem}{Theorem}[section]
\newtheorem{corollary}[theorem]{Corollary}
\newtheorem{lemma}[theorem]{Lemma}
\newtheorem{proposition}[theorem]{Proposition}
\theoremstyle{definition}
\newtheorem{remark}[theorem]{Remark}
\newtheorem*{acknowledgments}{Acknowledgments}
\title{Explicit bounds on torsion of CM abelian varieties over $p$-adic fields 
with values in Lubin-Tate extensions} 
\author{Yoshiyasu Ozeki\footnote{
Faculty of Science, Kanagawa University,
3-27-1 Rokkakubashi, Kanagawa-ku, Yokohama-shi, Kanagawa 221-8686, JAPAN
\endgraf
e-mail: {\tt ozeki@kanagawa-u.ac.jp}}
}
\begin{document}
\maketitle

\begin{abstract}
Let $K$ and $k$ be $p$-adic fields. 
Let $L$ be the composite field of 
$K$ and  a certain Lubin-Tate extension over  $k$
(including the case where $L=K(\mu_{p^{\infty}})$).
In this paper, 
we show that there exists an explicitly described constant $C$,
depending only on $K,k$ and an integer $g \ge 1$,   
which satisfies the following property:
If $A_{/K}$ is a $g$-dimensional CM abelian variety,  
then the order of the $p$-torsion subgroup of $A(L)$
 is bounded by $C$. 
We also give a similar bound in the case where 
$L=K(\sqrt[p^{\infty}]{K})$.
Applying our results, we study  bounds of  orders of  
torsion subgroups of some CM abelian varieties 
over number fields with values in full cyclotomic fields. 
\end{abstract}



\section{Introduction}

Let $p$ be a  prime number and $K$ a $p$-adic field 
(= a finite extension of $\mbb{Q}_p$). 
It is a theorem of Mattuck \cite{Mat} that, for a $g$-dimensional abelian variety $A$ 
over $K$ and a finite extension $L/K$,  the Mordell-Weil group 
$A(L)$ is isomorphic to 
the direct sum of $\mbb{Z}_p^{\oplus g\cdot [L:\mbb{Q}_p]}$ 
and a finite group.  
Our interest is to study various information about
the torsion subgroup  $A(L)_{\mrm{tor}}$ of $A(L)$. 
For this, Clark and Xarles \cite{CX} gave an explicit upper bound of 
the order of $A(L)_{\mrm{tor}}$ of $A(L)$
in terms of $p, g$ and some numerical invariants of  $L$
if $A$ has anisotropic reduction.
This includes the case where $A$ has potential good reduction
and in this case the existence of a bound can be found in some literatures
(cf.\ \cite{Si2}, \cite{Si3}).
We consider the case where $L/K$ is {\it of infinite degree}.
There are some situations in which the torsion part $A(L)_{\mrm{tor}}$ is finite.
Suppose that $A$ has potential good reduction.
It is a theorem of Imai  \cite{Im} that $A(K(\mu_{p^{\infty}}))_{\mrm{tor}}$  is finite.
Here,  $K(\mu_{p^{\infty}})$ is the extension field of 
$K$ obtained by adjoining   all $p$-power roots of unity.
Moreover,  Kubo and Taguchi showed in \cite{KT} that 
$A(K(\sqrt[p^{\infty}]{K}))_{\mrm{tor}}$  is also finite 
where $K(\sqrt[p^{\infty}]{K})$ is the extension field of $K$
obtained by adjoining   all $p$-power roots of all elements of $K$.
The author showed in \cite{Oz1} 
that there exists a "uniform" and "theoretical" bound of the order of 
$A(K(\sqrt[p^{\infty}]{K}))_{\mrm{tor}}$
under the assumption that $A$ has complex multiplication. 
(Here we say that $A$ has complex multiplication if there exists 
a ring homomorphism $F\to \mbb{Q}\otimes_{\mbb{Z}}\mrm{End}_{\overline{K}}A$
for some algebraic number field $F$ of degree $2g$.)

The main purpose of this paper is to give explicit upper  bounds of 
the orders of  $A(K(\mu_{p^{\infty}}))_{\mrm{tor}}$
and $A(K(\sqrt[p^{\infty}]{K}))_{\mrm{tor}}$ 
for  abelian varieties $A/K$ with
complex multiplication.
For this, we should note that to give an upper bound of the order of the prime-to-$p$ part of $A(K(\mu_{p^{\infty}}))_{\mrm{tor}}$
is not so difficult.  In fact, the reduction map gives an injection 
from the prime-to-$p$ part of the group which we want to study
into certain rational points of the reduction $\bar{A}$ of $A$ 
(if $A$ has good reduction), 
and the order of the target is bounded by the Weil bound.
Hence the essential obstruction for our purpose appears in a study of 
the $p$-part $A(K(\mu_{p^{\infty}}))[p^{\infty}]$ of 
$A(K(\mu_{p^{\infty}}))_{\mrm{tor}}$.

Let us state our main results. For a $p$-adic field $k$ and a uniformizer $\pi$
of $k$, we denote by $k_{\pi}/k$ the Lubin-Tate extension associated with $\pi$
(that is,
$k_{\pi}$ is the extension field of $k$ obtained by adjoining 
all $\pi$-power torsion points of the Lubin-Tate formal group associated with $\pi$).
For example,  we have $k_{\pi}=\mbb{Q}_p(\mu_{p^{\infty}})$ if 
 $k=\mbb{Q}_p$ and $\pi=p$. 
We set $d_L:=[L:\mbb{Q}_p]$ for any $p$-adic field $L$.
For any integer $n>0$, 
we set
\begin{align*}
& \Phi(n):=
\mrm{Max}\{m\in \mbb{Z} \mid \mbox{$\varphi(m)$ divides $2n$} \}, \\
& H(n):=
\mrm{gcd}\{\sharp \mrm{GSp}_{2n}(\mbb{Z}/N\mbb{Z}) \mid 
N\ge 3 \}. 
\end{align*}
Here, $\varphi$ is the Euler's totient function. 
There are some upper bounds related with $H(n)$ and $\Phi(n)$
(see Section \ref{Phi-H}).
It is a theorem of Silverberg \cite{Si1} that we have $H(n)<2(9n)^{2n}$ for any $n>0$. 
It follows from elementary arguments that we have $\Phi(n)<6n\sqrt[3]{n}$ for $n>1$.
Furthermore, a lower bound \eqref{phi:lower} of $\vphi$
proved by Rosser and Schoenfeld  \cite{RS}
gives $\Phi(n)< 4n\log \log n$ for $n>3^{3^9}$.
\if0
The upper bounds of functions $\Phi(n)$ and $H(n)$
are important for our results.
A lot of bounds on $\Phi(n)$ are well-known.
The function $H(n)$ is well-studied in \cite{Si1}
We set 
\begin{align*}
m:=
\left\{
\begin{array}{cl}
3 &\quad 
\mrm{if}\ p\not=3, \cr
4 &\quad \mrm{if}\ p=3,
\end{array}
\right.
\quad 
d_g:=\sharp GL_{2g}(\mbb{Z}/m\mbb{Z})
=
\left\{
\begin{array}{cl}
\prod^{2g-1}_{i=0}(3^{2g}-3^i) &\quad 
\mrm{if}\ p\not=3, \cr
2^{4g^2}\prod^{2g-1}_{i=0}(2^{2g}-2^i) 
&\quad \mrm{if}\ p=3.
\end{array}
\right.
\end{align*}
Note that we have $d_g<m^{4g^2}$.
\fi

\begin{theorem}[= a special case of Theorem \ref{MT:CM:refined}]
\label{MT:CM}
Let $g>0$ be a positive integer. 
Let  $k$ be a $p$-adic field  with residue cardinality $q_k$ and  $\pi$ a uniformizer of $k$.
Assume the following conditions.
\begin{itemize}
\item[{\rm (i)}] $q_k^{-1}\mrm{Nr}_{k/\mbb{Q}_p}(\pi)$ is a root of 
unity\footnote{This condition is equivalent to say that some finite extension of $k_{\pi}$
contains $\mbb{Q}_p(\mu_{p^{\infty}})$ (cf.\ \cite[Lemma 2.7 (2)]{Oz}).};
we denote by $0<\mu<p$ the minimum integer so that 
$(q_k^{-1}\mrm{Nr}_{k/\mbb{Q}_p}(\pi))^{\mu}=1$, and 
\item[{\rm (ii)}] $d_k$ is prime to $(2g)!$.
\end{itemize}
Then, for any $g$-dimensional abelian variety $A$ over 
a $p$-adic field $K$ with complex multiplication, 
we have 
$$
A(Kk_{\pi})[p^{\infty}]\subset A[p^C]
$$
where 
$$
C := 2g^2\cdot (2g)!\cdot \Phi(g)H(g)\cdot \mu \cdot d_{Kk}  
+12g^2-18g+10.
$$
\if0
\begin{align*}
C& := 2g^2\cdot (2g)!\cdot \Phi(g)H(g)\cdot \mu \cdot d_{Kk}  
+12g^2-18g+10 \\
& < 2g^2\cdot (2g)! \cdot m^{4g^2}d_{Kk} (p-1).
\end{align*}
\fi
In particular, we have 
$$
\sharp A(Kk_{\pi})[p^{\infty}]\le p^{2gC}.
$$
\end{theorem}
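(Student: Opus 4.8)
The plan is to reduce, at the cost of an explicitly bounded finite extension of the base, to an abelian variety with good reduction whose complex multiplication is rationally defined, and then to read off the bound from the local theory of CM abelian varieties together with local class field theory.

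\emph{Reduction.} Put $L:=Kk$. As $A$ has CM it has potential good reduction (Serre--Tate), and by the criterion of N\'eron--Ogg--Shafarevich good reduction is achieved over an extension of $L$ of degree dividing the order of the image of inertia in $\mathrm{Aut}(A[N])$ for some $N\ge3$; since this image lies (for $N$ prime to a polarization degree) in $\mathrm{GSp}_{2g}(\mathbb{Z}/N\mathbb{Z})$, a Minkowski-type argument (Silverberg) shows that degree divides $H(g)$. After a further extension of degree dividing $(2g)!$ we may moreover assume $A$ is isogenous over the base to a product of simple CM abelian varieties with their CM fields $F$ (of degree twice the respective dimension) fully defined, and --- since the number $w_F$ of roots of unity in such an $F$ satisfies $\varphi(w_F)\mid 2g$, hence $w_F\le\Phi(g)$ --- an extension of degree at most $\Phi(g)$ both adjoins $\mu_{w_F}$ and trivializes the finite-order part of the characters occurring below. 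Altogether we obtain a finite extension $K'/L$ with $[K':L]\le(2g)!\,\Phi(g)\,H(g)$, so $[K':\mathbb{Q}_p]\le(2g)!\,\Phi(g)\,H(g)\,d_{Kk}$; since $A(Kk_{\pi})[p^{\infty}]\subseteq A(K'k_{\pi})[p^{\infty}]$, it is enough to bound the latter. From here I assume $A/K'$ has good reduction and, up to isogeny, is simple with CM by a field $F\subseteq\mathrm{End}^{0}_{K'}(A)$, $[F:\mathbb{Q}]=2g$, with $\mu_{w_F}\subseteq K'$.

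\emph{The CM characters.} Over $K'$ the rational Tate module $V_pA$ is free of rank one over $F\otimes_{\mathbb{Q}}\mathbb{Q}_p=\prod_{\mathfrak{p}\mid p}F_{\mathfrak{p}}$, so $V_pA=\bigoplus_{\mathfrak{p}\mid p}V_{\mathfrak{p}}$ with $\dim_{F_{\mathfrak{p}}}V_{\mathfrak{p}}=1$, on which $G_{K'}$ acts through a character $\psi_{\mathfrak{p}}\colon G_{K'}\to\cO_{\mathfrak{p}}^{\times}$ (the image is in the units since $G_{K'}$ stabilizes the lattice $T_{\mathfrak{p}}:=T_pA\cap V_{\mathfrak{p}}$; after enlarging $T_pA$ to the $\cO_F\otimes\mathbb{Z}_p$-lattice it generates, which changes $A(K'k_{\pi})[p^{\infty}]$ only within the bounds obtained below, $T_{\mathfrak{p}}$ is free of rank one over $\cO_{\mathfrak{p}}$). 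By the main theorem of complex multiplication in the local setting (Lubin--Tate theory), $\psi_{\mathfrak{p}}\circ\mathrm{Art}_{K'}$ agrees on $\cO_{K'}^{\times}$, up to a character of order dividing $w_F$, with the explicit algebraic character $u\mapsto\prod_{\sigma}\sigma(u)^{\pm1}$ attached to the $\mathfrak{p}$-part of the CM type, while $\prod_{\mathfrak{p}}N_{F_{\mathfrak{p}}/\mathbb{Q}_p}\circ\psi_{\mathfrak{p}}$ equals a power of the cyclotomic character up to finite order. Because $\psi_{\mathfrak{p}}$ stabilizes $T_{\mathfrak{p}}$,
\[
(V_{\mathfrak{p}}/T_{\mathfrak{p}})^{G_{K'k_{\pi}}}=\mathfrak{p}^{-a_{\mathfrak{p}}}T_{\mathfrak{p}}/T_{\mathfrak{p}},\qquad
\mathfrak{p}^{a_{\mathfrak{p}}}=\bigl(\psi_{\mathfrak{p}}(g)-1:g\in G_{K'k_{\pi}}\bigr)\cO_{\mathfrak{p}},
\]
so the problem reduces to bounding the exponents $a_{\mathfrak{p}}$: then $A(K'k_{\pi})[p^{\infty}]$ is annihilated by $p^{\max_{\mathfrak{p}}\lceil a_{\mathfrak{p}}/e_{\mathfrak{p}}\rceil}$ (with $e_{\mathfrak{p}}:=e(F_{\mathfrak{p}}/\mathbb{Q}_p)$) and has order $\prod_{\mathfrak{p}}q_{\mathfrak{p}}^{a_{\mathfrak{p}}}$.

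\emph{The main estimate and the obstacle.} Bounding $a_{\mathfrak{p}}$ --- equivalently, forcing $\psi_{\mathfrak{p}}(G_{K'k_{\pi}})$ to contain $1+\mathfrak{p}^{b}\cO_{\mathfrak{p}}$ for an explicitly small $b$ --- is the step I expect to be the real obstacle. By local class field theory $G_{K'k_{\pi}}$ corresponds under $\mathrm{Art}_{K'}$ to $N_{K'/k}^{-1}(\overline{\langle\pi\rangle})$, which contains $\mathrm{Art}_{K'}\!\bigl(\ker(N_{K'/k}\colon\cO_{K'}^{\times}\to\cO_{k}^{\times})\bigr)$ together with suitable elements of positive valuation (lifts of a Frobenius power). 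Inserting the explicit formula for $\psi_{\mathfrak{p}}$, the image of the first piece is the image of an open subgroup of $\cO_{K'}^{\times}$ under the $\mathfrak{p}$-component of a reflex norm (a product of embeddings), hence is open in $\cO_{\mathfrak{p}}^{\times}$ with index bounded in terms of the ramification of $K'/\mathbb{Q}_p$ --- here hypothesis (ii) is used: since $d_k$ is prime to $(2g)!$, $F_{\mathfrak{p}}$ is linearly disjoint from $k$ over $\mathbb{Q}_p$, which is exactly the non-degeneracy needed for openness. The constraint coupling the Frobenius twist to the unit part is governed by $q_k^{-1}\mathrm{Nr}_{k/\mathbb{Q}_p}(\pi)$, and this is where hypothesis (i) enters: it makes that element a root of unity of order $\mu$ (equivalently, $[k_{\pi}(\mu_{p^{\infty}}):k_{\pi}]=\mu$, cf.\ the footnote to (i)), which caps the cyclotomic contribution to $a_{\mathfrak{p}}$ by $\mu$. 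Assembling these contributions and converting $\mathfrak{p}$-adic to $p$-adic valuations produces the multiplicative part $2g^{2}\cdot(2g)!\,\Phi(g)H(g)\cdot\mu\cdot d_{Kk}$ of $C$ (the factor $2g^{2}$ absorbing the summation of the residue degrees $f_{\mathfrak{p}}$ over the at most $2g$ primes $\mathfrak{p}$ above $p$ and the rank of $T_pA$), while the standard estimates on the conductors of the crystalline characters $\psi_{\mathfrak{p}}$ --- bounding $b$ in terms of $e(F_{\mathfrak{p}}/\mathbb{Q}_p)$ and $v_{\mathfrak{p}}(p)$ --- yield the additive correction $12g^{2}-18g+10$. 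Finally $\#A(Kk_{\pi})[p^{\infty}]\le\#A(K'k_{\pi})[p^{\infty}]\le\#A[p^{C}]=p^{2gC}$, which is the last assertion.
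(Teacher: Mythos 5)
Your overall skeleton matches the paper's: reduce by Serre--Tate/Silverberg to good reduction with rationally defined endomorphisms over an extension of degree at most $\Phi(g)H(g)$, decompose $V_pA$ into CM characters, describe $G_{K'k_\pi}$ via the local Artin map as $\mathrm{Nr}^{-1}$ of the subgroup generated by $\pi$, and use hypothesis (ii) for linear disjointness and hypothesis (i) to cap the Frobenius contribution by $\mu$. But the paragraph you yourself label ``the real obstacle'' is where the entire content of the proof lives, and what you assert there is not a proof and is in part false as stated. First, the claim that the image of $\ker(N_{K'/k})\cap\mathcal{O}_{K'}^\times$ under $\psi_{\mathfrak{p}}$ is \emph{open} in $\mathcal{O}_{\mathfrak{p}}^\times$ with explicitly bounded index fails for the characters whose Hodge--Tate multiset is constant ($\{0\}$ or $\{1\}$): there the algebraic part is trivial or a norm, the unit group contributes nothing, and the paper must instead bound $v_p(\psi_i^\mu(\pi\omega)^{-1}-1)$ by the Weil bound applied to $\#\overline{A}(\mathbb{F}_{q_{L'}})$ (Lemma 3.5). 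Your proposal never invokes the Weil bound, so these characters are unbounded in your argument.

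Second, and more fundamentally, the quantity that controls $A(K'k_\pi)[p^\infty]$ is $\min_{\sigma}\sum_i d_{F_i}v_p(\psi_i(\sigma)-1)$ over a \emph{single} $\sigma\in G_{K'k_\pi}$, so one must exhibit one group element at which \emph{all} the characters simultaneously have small (but nonzero, hence bounded below too) valuation. This is the paper's Theorem 2.3: an explicit $\hat\omega\in\ker\mathrm{Nr}_{M/\mathbb{Q}_p}$ built from exponentials of powers of a primitive element, with a Vandermonde-determinant/different computation giving the per-character bound $C(d_K,M,h)$ and an inductive pigeonhole choice of exponents $s_r$ (and the shift $s(x)$ for $x=\pi$) preventing cancellation among the characters; the shifts $\delta_{(i)}$ and $r+\delta_{(i)}$ are exactly what produce the additive term $12g^2-18g+10$. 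Your proposal replaces all of this with ``assembling these contributions \ldots produces the multiplicative part \ldots while standard estimates on the conductors \ldots yield the additive correction,'' which does not establish either constant. As written, the argument does not yield any explicit $C$, let alone the stated one.
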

As an immediate consequence of the theorem above, 
we obtain a result for cyclotomic extensions;
see Corollary \ref{MT:CM:cycl}.
Furthermore, the method of our proof of Theorem \ref{MT:CM}
can be applied to the filed $K(\sqrt[p^{\infty}]{K})$ 
discussed in Kubo and Taguchi, 
which gives a refinement of the main theorem of \cite{Oz1}.
\begin{theorem}
\label{MT:CM:KT}
Let $g>0$ be a positive integer.
For any $g$-dimensional abelian variety $A$ over a $p$-adic field 
$K$ with complex multiplication, 
we have 
$$
A(K(\sqrt[p^{\infty}]{K}))[p^{\infty}] \subset A[p^C]
$$
where 
$$
C := 2g^2 \cdot (2g)!\cdot p^{1+v_p(2)}\cdot 
(\Phi(g)H(g))^2 \cdot p^{v_p(d_K)}d_K
+12g^2-18g+10.
$$
$($Here, $v_p$ is the $p$-adic valuation normalized by $v_p(p)=1$.$)$ 
In particular, we have 
$$
\sharp A(K(\sqrt[p^{\infty}]{K}))[p^{\infty}]\le p^{2gC}.
$$
\end{theorem}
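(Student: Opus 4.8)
The plan is to run the same chain of reductions as in the proof of Theorem~\ref{MT:CM}, the only genuinely new ingredient being a precise enough description of the arithmetic of $K(\sqrt[p^{\infty}]{K})$ to play the role of the Lubin--Tate extension $k_{\pi}$ there. As in the introduction, the prime-to-$p$ part of the torsion is controlled by the reduction map, so the task is to bound the exponent of $A(K(\sqrt[p^{\infty}]{K}))[p^{\infty}]$.

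\emph{Reduction to good reduction with the CM defined over the base.} Since $A$ has complex multiplication there is a CM field $F$ of degree $2g$ with $F\hookrightarrow \mbb{Q}\otimes_{\mbb{Z}}\mrm{End}_{\overline{K}}A$ (replacing $A$ by an isogenous variety, which changes the torsion only by an amount absorbed into the constant, we may assume $\mcal{O}_F$ acts); because $G_K$ permutes the $2g$ embeddings $F\hookrightarrow\overline{\mbb{Q}}$, the field $K'$ over which all geometric endomorphisms are defined has $[K':K]\mid(2g)!$. Over $K'$ the variety $A$ has potential good reduction, and — trivializing the inertia action on $A[p^{\infty}]$ after adjoining a suitable level structure, using the Weil-pairing inclusion of the mod-$N$ image in $\mrm{GSp}_{2g}(\mbb{Z}/N\mbb{Z})$ for $N\ge3$ and passing to the greatest common divisor over $N$, together with the control of the roots of unity in the $F_w$ — one obtains a further extension over which $A$ has good reduction, the total base change $K_0/K$ satisfying $[K_0:K]\mid(2g)!\,\Phi(g)H(g)$; in particular $d_{K_0}\mid(2g)!\,\Phi(g)H(g)\,d_K$. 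Since $K(\sqrt[p^{\infty}]{K})\subseteq L_0:=K_0\cdot K(\sqrt[p^{\infty}]{K})$ it suffices to bound the exponent of $A(L_0)[p^{\infty}]$; running the estimate over the good-reduction base $K_0$ produces the universal additive summand $12g^2-18g+10$ (coming from the \'etale part of $A[p^{\infty}]$ and from the passage from a $G_{L_0}$-invariant sublattice to an exponent of torsion), exactly as in Theorem~\ref{MT:CM}.

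\emph{The CM characters.} Over $\mcal{O}_{K_0}$ the $p$-divisible group $\mcal{G}:=A[p^{\infty}]$ carries an action of $\mcal{O}_F\otimes\mbb{Z}_p=\prod_{w\mid p}\mcal{O}_{F_w}$, whence a decomposition $\mcal{G}=\prod_{w}\mcal{G}_w$ with $T_p\mcal{G}_w$ free of rank one over $\mcal{O}_{F_w}$, on which $G_{K_0}$ acts through a character $\chi_w\colon G_{K_0}\to\mcal{O}_{F_w}^{\times}$. By the local theory of complex multiplication (Lubin--Tate / Serre--Tate), $\chi_w\circ\mrm{rec}_{K_0}$ agrees on a congruence subgroup $1+\mfrak{m}_{K_0}^{c}$ of $\mcal{O}_{K_0}^{\times}$ with an explicit reflex-norm homomorphism into $\mcal{O}_{F_w}^{\times}$ that is injective modulo torsion, $c$ being bounded in terms of $v_p$ of the different of $K_0$. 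For any algebraic extension $L_0/K_0$, the $w$-component of $A(L_0)[p^{\infty}]$, realised inside $F_w/\mcal{O}_{F_w}$, is the submodule annihilated by the ideal $\mfrak{m}_{F_w}^{j_w}$ generated by $\{\chi_w(\sigma)-1:\sigma\in G_{L_0}\}$, so its exponent equals $p^{\lceil j_w/e_w\rceil}$ with $e_w=e(F_w/\mbb{Q}_p)$; everything thus reduces to a uniform upper bound on the $j_w$, that is, to showing that $\chi_w(G_{L_0})$ is not too small.

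\emph{The arithmetic of $K(\sqrt[p^{\infty}]{K})$, and the main obstacle.} This is the only step where the specific field enters, and it is the crux. By Kummer theory over $K_0(\mu_{p^{\infty}})\subseteq L_0$ combined with local class field theory — the point that refines Kubo--Taguchi's finiteness statement — one shows that $L_0$ contains, up to a subextension whose relative degree is bounded by $\Phi(g)H(g)$, a Lubin--Tate-type extension of $K_0$ obtained by adjoining $\mu_{p^{\infty}}$ and the $p$-power roots of a uniformizer and of a $\mbb{Z}_p$-basis of $\mcal{O}_{K_0}^{\times}$; on it $\chi_w$ is forced into $1+\mfrak{m}_{F_w}^{j_w}$ with $j_w$ satisfying the same type of estimate as in the Lubin--Tate case of Theorem~\ref{MT:CM}, except that the order $\mu$ of the norm-residue root of unity is replaced by its worst-case value $p^{1+v_p(2)}$, $d_{Kk}$ is replaced by $p^{v_p(d_K)}d_K$ (the extra $p^{v_p(d_K)}$ coming from the $p$-part of the torsion of the unit group and from the $d_K+1$ Kummer directions), and one picks up the announced additional factor $\Phi(g)H(g)$ from comparing $K(\sqrt[p^{\infty}]{K})$ with an honest Lubin--Tate extension of $K_0$. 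Inserting this bound on the $j_w$ into the previous step and adding the summand $12g^2-18g+10$ gives $A(K(\sqrt[p^{\infty}]{K}))[p^{\infty}]\subseteq A[p^{C}]$ with $C$ as stated, and the final inequality $\sharp A(K(\sqrt[p^{\infty}]{K}))[p^{\infty}]\le p^{2gC}$ then follows from $A[p^{C}]\cong(\mbb{Z}/p^{C}\mbb{Z})^{2g}$. The hard part is precisely this last step: carrying out the class-field-theoretic comparison explicitly enough to recover the exact constant $p^{1+v_p(2)}(\Phi(g)H(g))^{2}p^{v_p(d_K)}d_K$, in particular bookkeeping correctly the interaction between the reflex-norm description of $\chi_w$ and the Kummer/reciprocity description of the field, and the $p=2$ anomaly measured by $v_p(2)$.
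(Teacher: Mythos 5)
Your overall architecture (reduce via Serre--Tate/Silverberg to good reduction with all endomorphisms defined over the base, decompose $V_p(A)$ into CM characters, and bound $v_p(\psi(\sigma)-1)$) matches the paper, but the one genuinely new step --- handling $K(\sqrt[p^{\infty}]{K})$ --- is exactly the step you leave open, and your proposed route for it is not the one that works. You suggest showing that $K(\sqrt[p^{\infty}]{K})$ contains, up to a subextension of degree bounded by $\Phi(g)H(g)$, a Lubin--Tate-type extension of $K_0$; this claim is unsubstantiated (the field is an infinite, highly non-abelian extension and no such containment is established), and you yourself flag that "the hard part is precisely this last step." The paper's actual mechanism is different and simpler: since the CM representation is abelian, all points of $A(\hat K)[p^{\infty}]$ are already defined over the maximal abelian subextension $\hat K_{\mathrm{ab}}$ of $\hat K=K(\sqrt[p^{\infty}]{K})$ over $K$; writing $G=\mathrm{Gal}(\hat K/K)$, $H=\mathrm{Gal}(\hat K/K(\mu_{p^{\infty}}))$ and using $\sigma\tau\sigma^{-1}=\tau^{\chi_p(\sigma)}$, one gets $(G,G)\supset H^{\chi_p(\sigma)-1}$, and a short lemma produces $\sigma_0$ with $\chi_p(\sigma_0)-1=p^{\nu}$, $\nu=v_p(d_K)+1+v_p(2)$. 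Hence $\mathrm{Gal}(\hat K_{\mathrm{ab}}/K(\mu_{p^{\infty}}))$ has exponent $p^{\nu}$, and the whole problem reduces to the cyclotomic case with the pair $(k,\mu)$ of the Lubin--Tate argument replaced by $(\mbb{Q}_p,p^{\nu})$. This is where the factors $p^{1+v_p(2)}$ and $p^{v_p(d_K)}$ in $C$ really come from; your attribution of an "additional factor $\Phi(g)H(g)$ from comparing with an honest Lubin--Tate extension" is a misreading --- the $(\Phi(g)H(g))^2$ arises solely in the final descent step, one factor from $d_L\le \Phi(g)H(g)\,d_K$ and one from $p^{v_p(d_{L/K})}\le d_{L/K}\le\Phi(g)H(g)$.

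Two further points would derail the stated constant even if the main step were filled in. First, the paper's reduction (Theorem \ref{ST-Sil}(4)) gives a single extension $L/K$ of degree at most $\Phi(g)H(g)$ over which $A$ has good reduction \emph{and} all endomorphisms are defined; your bound $[K_0:K]\mid (2g)!\,\Phi(g)H(g)$ is larger and would replace $(\Phi(g)H(g))^2$ by $((2g)!\,\Phi(g)H(g))^2$ in $C$. Second, replacing $A$ by an isogenous variety so that $\mcal{O}_F$ acts is unnecessary (the paper only uses that $V_p(A)$ is free of rank one over $F\otimes_{\mbb{Q}}\mbb{Q}_p$) and changes the torsion by an amount you do not control; "absorbed into the constant" is not justified for an explicit bound of this kind.
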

We can consider some further topics. 
For example, we do not know what will happen if we remove the CM assumption from above theorems.
Our proofs in this paper deeply depend on the theory of locally algebraic representations,
which can be  adapted only for abelian representations.
This is the main reason why we can not remove the CM assumption
form our arguments. 
To overcome this obstruction, 
it seems to be helpful for us to study the case of  
(not necessary CM) elliptic curves. We will study this case as a future work.
We are also interested in giving 
the list of the groups that appears as $A(Kk_{\pi})[p^{\infty}]$ or 
$A(K(\sqrt[p^{\infty}]{K}))[p^{\infty}]$. 
However, this should be quite difficult; 
the author does not know such classification results even for $A(K)[p^{\infty}]$.

Combining the cyclotomic case of Theorem \ref{MT:CM}  and 
Ribet's arguments in \cite{KL},
we can obtain a result 
on a bound of the order of the 
torsion subgroup of some CM abelian variety 
defined over a number field with values in full cyclotomic fields. 
(Here, a number field is a finite extension of $\mbb{Q}$.)

\begin{theorem}
\label{gsurf2}
Let $g>0$ be an integer.
Let $K$ be a number field of degree $d$ 
and denote by $h$  the narrow class number of $K$.
Let $K(\mu_{\infty})$ be the field 
obtained by adjoining to $K$ all roots of unity. 
Let $A$ be a $g$-dimensional abelian variety over $K$ with  complex multiplication
which has good reduction everywhere. 
Then, we have 
$$
 A(K(\mu_{\infty}))_{\mrm{tor}}\subset A[N]
$$
where 
$$
N:=\left( \prod_{p} p\right)^{2g^2\cdot (2g)!\cdot \Phi(g)H(g)\cdot dh
+12g^2-18g+10}.
$$
Here, $p$ ranges over the prime numbers  such that 
either 
$p\le (1+\sqrt{2}^{dh})^{2g}$ or 
$p$ is ramified in $K$.
\end{theorem}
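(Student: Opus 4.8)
The plan is to bound the $\ell$-primary torsion $A(K(\mu_{\infty}))[\ell^{\infty}]$ one prime $\ell$ at a time, using $A(K(\mu_{\infty}))_{\mrm{tor}}=\bigoplus_{\ell}A(K(\mu_{\infty}))[\ell^{\infty}]$. Write $c:=2g^2\cdot(2g)!\cdot\Phi(g)H(g)\cdot dh+12g^2-18g+10$, so that $A[N]=\prod_{p}A[p^{c}]$ with $p$ ranging over the primes specified in the statement (those ramified in $K$, or at most $(1+\sqrt{2}^{dh})^{2g}$). It then suffices to prove two assertions: \textbf{(a)} $A(K(\mu_{\infty}))[\ell^{\infty}]\subseteq A[\ell^{c}]$ for \emph{every} prime $\ell$; and \textbf{(b)} $A(K(\mu_{\infty}))[\ell^{\infty}]=0$ whenever $\ell$ is unramified in $K$ and $\ell>(1+\sqrt{2}^{dh})^{2g}$. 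Granting (a) and (b), the only $\ell$-primary parts that can be nonzero are those of the relevant primes, each contained in $A[\ell^{c}]$, whence $A(K(\mu_{\infty}))_{\mrm{tor}}\subseteq A[N]$.

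For \textbf{(a)}, I localize at $\ell$ and apply the cyclotomic case of Theorem \ref{MT:CM}. Let $P\in A(K(\mu_{\infty}))$ have $\ell$-power order; then $P$ is rational over a subfield of $K(\mu_{\infty})$ of the form $K(\mu_{\ell^{a}})\cdot F$, where $F/K$ is generated by roots of unity of order prime to $\ell$ and is therefore unramified at every place above $\ell$. Because $A$ has complex multiplication and good reduction everywhere, Ribet's argument in \cite{KL} --- together with class field theory --- shows that one may choose a place $w$ of $K(\mu_{\ell^{a}})F$ above $\ell$ for which the completion $E:=(K(\mu_{\ell^{a}})F)_{w}$ satisfies $[E:\mbb{Q}_{\ell}]\le dh$: the factor $d=[K:\mbb{Q}]$ bounds the local degree of $K$ at the prime below $w$, while the narrow class number $h$ bounds the extra ``unramified-away-from-$\ell$'' part of the cyclotomic field of definition not already present in $K$. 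Now $E\cdot\mbb{Q}_{\ell}(\mu_{\ell^{\infty}})$ is the composite of the $p$-adic field $E$ with the Lubin--Tate extension of $k:=\mbb{Q}_{\ell}$ attached to the uniformizer $\pi:=\ell$; for this choice $q_{k}^{-1}\mrm{Nr}_{k/\mbb{Q}_{\ell}}(\pi)=\ell^{-1}\cdot\ell=1$, so the invariant $\mu$ of Theorem \ref{MT:CM} equals $1$, and $d_{k}=1$ is prime to $(2g)!$. Hence Theorem \ref{MT:CM} (equivalently Corollary \ref{MT:CM:cycl}) applies to the CM abelian variety $A_{/E}$ and gives $P\in A(E\cdot\mbb{Q}_{\ell}(\mu_{\ell^{\infty}}))[\ell^{\infty}]\subseteq A[\ell^{C_{E}}]$, where $C_{E}=2g^2\cdot(2g)!\cdot\Phi(g)H(g)\cdot[E:\mbb{Q}_{\ell}]+12g^2-18g+10\le c$. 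Since $P$ was arbitrary, (a) follows.

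For \textbf{(b)}, fix $\ell$ unramified in $K$ with $\ell>(1+\sqrt{2}^{dh})^{2g}$; then $\ell\ne 2$, and it suffices to show $A(K(\mu_{\infty}))[\ell]=0$, for then the whole $\ell$-primary part vanishes. Suppose $Q\in A(K(\mu_{\infty}))$ has exact order $\ell$, so that $K(Q)/K$ is a finite abelian subextension of $K(\mu_{\infty})/K$. By the same complex-multiplication-and-good-reduction input, Ribet's argument (via class field theory) provides a place $\mfrak{q}$ of $K(Q)$ above the rational prime $2$ whose residue field $\mbb{F}_{\mfrak{q}}$ has cardinality at most $2^{dh}$ (again $d$ coming from $K$, $h$ from the class-field-theoretic part). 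Since $A$ has good reduction at $\mfrak{q}$ and $2\ne\ell$, reduction modulo $\mfrak{q}$ carries $Q$ to a point of exact order $\ell$ in $\tilde{A}(\mbb{F}_{\mfrak{q}})$, whence $\ell\le\#\tilde{A}(\mbb{F}_{\mfrak{q}})\le(1+\sqrt{\#\mbb{F}_{\mfrak{q}}})^{2g}\le(1+\sqrt{2^{dh}})^{2g}=(1+\sqrt{2}^{dh})^{2g}$ by the Weil bound, contradicting the choice of $\ell$. Hence $A(K(\mu_{\infty}))[\ell]=0$, and (b) follows.

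The main obstacle is the effective form of Ribet's local--global argument used in both (a) and (b): showing that the field of definition of the cyclotomic torsion, localized at a place above $\ell$ (respectively above $2$), has degree (respectively residue degree) at most $dh$. This requires combining the good-reduction constraint --- forcing the associated abelian extension to be unramified outside $\ell$ --- with the CM structure, which makes the Galois action on the torsion abelian up to a base change already subsumed by the $(2g)!$ appearing in the local Theorem \ref{MT:CM}, and with a class-field-theoretic computation identifying the away-from-$\ell$ part of that abelian extension with a quotient of the narrow class group of $K$, of order dividing $h$. Once this reduction is in place, the remaining ingredients --- the Weil bound, the recognition of $\mbb{Q}_{\ell}(\mu_{\ell^{\infty}})$ as the Lubin--Tate extension of $\mbb{Q}_{\ell}$ for the uniformizer $\ell$, and the already-established Theorem \ref{MT:CM} --- fit together routinely.
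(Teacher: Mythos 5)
Your overall strategy --- bound the $\ell$-primary torsion prime by prime, establish a uniform exponent bound for every $\ell$, and a vanishing result for $\ell$ large and unramified --- matches the paper's decomposition. But the two places where you invoke ``Ribet's argument, together with class field theory'' to produce a local degree bound contain genuine gaps: neither bound is correct as stated, and the actual argument requires an intermediate field you never introduce.

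For (a), you claim a place $w$ of $K(\mu_{\ell^{a}})F$ above $\ell$ can be chosen so that $E:=(K(\mu_{\ell^{a}})F)_{w}$ has $[E:\mbb{Q}_{\ell}]\le dh$. That is false: $K(\mu_{\ell^{a}})$ alone has ramification index $\ell^{a-1}(\ell-1)$ over $\ell$, unbounded in $a$, and $F=K(\mu_{N})$ with $N$ prime to $\ell$ can contribute arbitrarily large unramified residue degree at $\ell$ --- so no completion of this field at $\ell$ has degree bounded by $dh$. What actually does the job is the field $K'$, the maximal subextension of $K(\mu_{\infty})/K$ unramified at all finite places; by class field theory $[K':K]$ divides the narrow class number $h$. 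By N\'eron--Ogg--Shafarevich ($A$ has good reduction everywhere), the $G_{K}$-action on $A[\ell^{\infty}]$ is unramified outside $\ell$, so the $\ell$-primary torsion rational over $K(\mu_{\infty})$ is already rational over the maximal subextension unramified outside $\ell$, which by the appendix lemma of Katz--Lang equals $K'(\mu_{\ell^{\infty}})$. Thus $A(K(\mu_{\infty}))[\ell^{\infty}]=A(K'(\mu_{\ell^{\infty}}))[\ell^{\infty}]$, and one applies Corollary~\ref{MT:CM:cycl} to $A$ over a completion $K'_{w}$ with $[K'_{w}:\mbb{Q}_{\ell}]\le[K':\mbb{Q}]\le dh$. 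The cyclotomic tower $\mu_{\ell^{\infty}}$ lives \emph{outside} the base $p$-adic field, in the Lubin--Tate extension, not inside $E$.

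For (b), you reduce a point $Q$ of exact order $\ell$ modulo a place $\mfrak{q}$ of $K(Q)$ above $2$ and assert $\#\mbb{F}_{\mfrak{q}}\le 2^{dh}$. There is no such bound: $K(Q)$ is a finite abelian extension of $K$ inside $K(\mu_{\infty})$, but its degree over $K$ and its residue degree at $2$ are not controlled by $h$. The argument you are gesturing at (Ribet, in the appendix to \cite{KL}) proceeds differently: one takes a \emph{simple} $G_{K'}$-submodule $W$ of $A[\ell]^{G_{K(\mu_{\infty})}}$; by the semistability and the structure of $\mrm{Gal}(K(\mu_{\infty})/K')$, $W$ is one-dimensional and $G_{K'}$ acts by $\overline{\chi}_{\ell}^{n}$; good reduction at $\ell$ and the Tate--Oort classification force $n\in\{0,1\}$. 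If $n=0$, $A(K')[\ell]\ne 0$; if $n=1$, one passes to a $K'$-isogenous quotient $A'=A^{\vee}/C$ with $A'(K')[\ell]\ne 0$. Only then does one reduce modulo a place $\mfrak{p}'_{0}$ of $K'$ (not of $K(Q)$) above $2$, where the residue field has size at most $2^{[K':\mbb{Q}]}\le 2^{dh}$, and the Weil bound on $\tilde{A'}(\mbb{F}_{\mfrak{p}'_{0}})$ gives the contradiction. Your shortcut skips both the reduction to $K'$ and the isogeny step, and without these the residue-field bound simply does not hold.
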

We should note that Chou gave in \cite{Ch}
the complete list of the groups that appears as $A(\mbb{Q}(\mu_{\infty}))_{\mrm{tor}}$
as $A$ ranges over all elliptic curves defined over $\mbb{Q}$.
For CM elliptic curves $A$ over a  number field $K$, 
more precise observations for the order of $A(K(\mu_{\infty}))_{\mrm{tor}}$
than ours are studied in \cite{CCM}.

\begin{acknowledgments}
The author would like to thank  Yuichiro Taguchi 
for useful discussion and correspondence
to the proof of our main results.
The author would like to thank Manabu Yoshida 
for giving us various advice on  an earlier draft.
Thanks also are due to Takaichi Fujiwara for helpful advice
on  functions discussed in Section \ref{Phi-H}.
This work is supported by JSPS KAKENHI Grant Number JP19K03433. 
\end{acknowledgments}

\vspace{5mm}
\noindent
{\bf Notation :}
For any perfect field $F$, 
we denote by $G_F$  the absolute Galois group of $F$.
In this paper,  a $p$-adic field is a finite extension of $\mbb{Q}_p$.
If $F$ is an algebraic extension of $\mbb{Q}_p$, 
we  denote by $\cO_F$ and $\mbf{m}_F$ 
the ring of integers of $F$ and its maximal ideal, respectively. 
We also denote by $F^{\mrm{ab}}$ the maximal abelian extension 
of $F$ (in a fixed algebraic closure of $F$).
We put $d_F=[F:\mbb{Q}_p]$ if $F$ is a $p$-adic field. 
For an algebraic extension $F'/F$,
we denote by $e_{F'/F}$ and $f_{F'/F}$ 
the ramification index of $F'/F$ and 
the extension degree of the residue field extension of  $F'/F$,
respectively.
We set $e_F:=e_{F/\mbb{Q}_p}$ and $f_{F}:=f_{F/\mbb{Q}_p}$, 
and  also set $q_F:=p^{f_F}$. 
Finally, we denote by $\Gamma_F$ the set of $\mbb{Q}_p$-algebra 
embeddings of $F$ into 
a (fixed) algebraic closure $\overline{\mbb{Q}}_p$ of $\mbb{Q}_p$.


%
%

.

\section{Evaluations of some $p$-adic valuations for  characters}
\label{keysection}
We fix an algebraic closure 
$\overline{\mbb{Q}}_p$ of $\mbb{Q}_p$.
Throughout this section, 
\if0
\footnote{This assumption is needed 
to use the theory of locally algebraic representations.}
\fi 
we assume that all $p$-adic fields are subfields of $\overline{\mbb{Q}}_p$.
Denote by $v_p$ the $p$-adic valuation normalized by $v_p(p)=1$.
For  any  continuous character 
$\psi$ of $G_{K}$, we often regard $\psi$ as a character of $\mrm{Gal}(K^{\mrm{ab}}/K)$.
We denote by $\mrm{Art}_K$  
the local Artin map $K^{\times}\to \mrm{Gal}(K^{\mrm{ab}}/K)$
with arithmetic normalization.
We set
$\psi_{K}:=\psi\circ \mrm{Art}_{K}$. 
We denote by $\widehat{K}^{\times}$ 
the profinite completion of $K^{\times}$.
Note that  the local Artin map induces a topological isomorphism
$\mrm{Art}_K\colon \widehat{K}^{\times}\overset{\sim}{\rightarrow} \mrm{Gal}(K^{\mrm{ab}}/K)$.
For a uniformizer $\pi_K$ of  $K$ ,
we denote by $\chi_{\pi_K}\colon G_K\to \cO_K^{\times}$
the Lubin-Tate character associated with $\pi_K$.
By definition, the character $\chi_{\pi_K}$ is characterized by 
$\chi_{\pi_K,K}(\pi_K)=1$ and $\chi_{\pi_K,K}(x)=x^{-1}$ for any $x\in \cO_K^{\times}$.
Let $\pi$ be a uniformizer of $k$
and  denote by $k_{\pi}$ the Lubin-Tate extension of $k$ associated with $\pi$.
The field corresponding to 
the kernel  of the Lubin-Tate character $\chi_{\pi}\colon G_k\to \cO_k^{\times}$
is  $k_{\pi}$, 
and   $k_{\pi}$ is a totally ramified abelian extension of $k$.

\begin{proposition}
\label{vplem}
Let $\psi_1,\dots , \psi_n \colon G_{K}\to M^{\times}$  be continuous characters.
Then we have 
\begin{align*}
& \ \mrm{Min}\left\{ \sum^n_{i=1} v_p(\psi_i(\sigma)-1) \mid \sigma\in G_{Kk_{\pi}} \right\} 
\\
\le & \   
\mrm{Min}\left\{ \sum^n_{i=1} v_p(\psi_{i,Kk}(\omega)-1) 
\mid \omega\in \mrm{Nr}_{Kk/k}^{-1}(\pi^{f_{Kk/k}\mbb{Z}}) \right\}.
\end{align*}
\end{proposition}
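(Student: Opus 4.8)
\section*{Proof proposal}

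The plan is to rewrite the left-hand side through local class field theory for the field $Kk$ and to match it term by term against the right-hand side. Write $\mrm{Art}_{Kk}$ for the arithmetically normalised local reciprocity map of $Kk$, inducing an isomorphism $\widehat{Kk}^{\times}\overset{\sim}{\rightarrow}\mrm{Gal}((Kk)^{\mrm{ab}}/Kk)$, and regard $(Kk)^{\times}$ inside its profinite completion. It suffices to prove the following: for every $\omega\in\mrm{Nr}_{Kk/k}^{-1}(\pi^{f_{Kk/k}\mbb{Z}})$ there is a $\sigma\in G_{Kk_{\pi}}$ with $\psi_i(\sigma)=\psi_{i,Kk}(\omega)$ for all $i$. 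Granting this, every term occurring in the right-hand minimum occurs also among the terms of the left-hand minimum, and the asserted inequality follows.

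So fix such an $\omega$ and set $\tilde{\omega}:=\mrm{Art}_{Kk}(\omega)\in\mrm{Gal}((Kk)^{\mrm{ab}}/Kk)$. First I would check that $\tilde{\omega}$ fixes $Kk_{\pi}$. By functoriality of the Artin map for the extension $Kk/k$, the restriction $\tilde{\omega}|_{k^{\mrm{ab}}}$ equals $\mrm{Art}_{k}(\mrm{Nr}_{Kk/k}(\omega))$; by hypothesis $\mrm{Nr}_{Kk/k}(\omega)=\pi^{m}$ for some $m\in f_{Kk/k}\mbb{Z}$, and since $\chi_{\pi,k}(\pi)=1$ by the defining property of the Lubin--Tate character we get $\chi_{\pi,k}(\pi^{m})=1$, i.e.\ $\mrm{Art}_{k}(\pi^{m})$ acts trivially on the fixed field of $\ker\chi_{\pi}$, which is exactly $k_{\pi}$. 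Hence $\tilde{\omega}$ fixes $k_{\pi}$; as it already fixes $Kk$ and $Kk_{\pi}=(Kk)k_{\pi}$, it fixes $Kk_{\pi}$, i.e.\ $\tilde{\omega}\in\mrm{Gal}((Kk)^{\mrm{ab}}/Kk_{\pi})$.

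Next, since $Kk_{\pi}/Kk$ is abelian the extension $(Kk)^{\mrm{ab}}/Kk_{\pi}$ is Galois, so the restriction map $G_{Kk_{\pi}}\to\mrm{Gal}((Kk)^{\mrm{ab}}/Kk_{\pi})$ is surjective; choose $\sigma\in G_{Kk_{\pi}}$ with $\sigma|_{(Kk)^{\mrm{ab}}}=\tilde{\omega}$. Because each $\psi_i$ takes values in the abelian group $M^{\times}$, its restriction to $G_{Kk}$ factors through $G_{Kk}^{\mrm{ab}}=\mrm{Gal}((Kk)^{\mrm{ab}}/Kk)$; hence $\psi_i(\sigma)$ depends only on $\sigma|_{(Kk)^{\mrm{ab}}}=\mrm{Art}_{Kk}(\omega)$ and equals $\psi_{i,Kk}(\omega)$, this last equality being just the definition of $\psi_{i,Kk}$. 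Therefore $\sum_{i}v_p(\psi_i(\sigma)-1)=\sum_{i}v_p(\psi_{i,Kk}(\omega)-1)$ with $\sigma\in G_{Kk_{\pi}}$, and taking the minimum over all admissible $\omega$ yields the claim.

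I do not anticipate a genuine difficulty: the argument is bookkeeping with the Artin map, its compatibility with norms, and the description (recalled just before the statement) of $k_{\pi}$ as the fixed field of $\ker\chi_{\pi}$. The two points deserving care are the consistent use of the arithmetic normalisation of $\mrm{Art}$ (so that both ``$\chi_{\pi,k}(\pi)=1$'' and the norm-functoriality compatibility hold with the same conventions), and checking that $\mrm{Nr}_{Kk/k}^{-1}(\pi^{f_{Kk/k}\mbb{Z}})$ is nonempty so that the right-hand minimum is meaningful --- which holds because $\mrm{Nr}_{Kk/k}((Kk)^{\times})$ is an open subgroup of finite index in $k^{\times}$ and hence contains $\pi^{m}$ for some $m>0$, necessarily a multiple of $f_{Kk/k}$.
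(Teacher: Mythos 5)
Your proof is correct and follows essentially the same route as the paper's: both pass through the local Artin map for $Kk$ and its compatibility with the norm $\mrm{Nr}_{Kk/k}$, together with the characterization of $k_{\pi}$ as the fixed field of $\ker\chi_{\pi}$. Your version is a bit more economical — you verify directly that $\mrm{Art}_{Kk}(\omega)$ fixes $Kk_{\pi}$ and lift, rather than computing the full preimage $\mrm{Art}^{-1}_{Kk}(\mrm{Gal}((Kk)^{\mrm{ab}}/Kk_{\pi}))=\mrm{Nr}_{Kk/k}^{-1}(\pi^{f_{Kk/k}\widehat{\mbb{Z}}})$ as the paper does — but the underlying idea is the same.
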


\begin{proof}
This is Proposition 3 of \cite{Oz1} but we include a proof here
for the sake of completeness.
Let $M$ be the maximal unramified extension of $k$ contained in $Kk$.
The group $\mrm{Art}^{-1}_k(\mrm{Gal}(k^{\mrm{ab}}/M))$ contains 
$\mrm{Art}^{-1}_k(\mrm{Gal}(k^{\mrm{ab}}/k^{\mrm{ur}}))=\cO_k^{\times}$.
Furthermore,  $\mrm{Art}^{-1}_k(\mrm{Gal}(k^{\mrm{ab}}/M))$
is a subgroup of $\widehat{k}^{\times}=\pi^{\widehat{\mbb{Z}}}\times \cO_k^{\times}$
of index $[M:k]=f_{Kk/k}$.
Thus it holds that 
$\mrm{Art}^{-1}_k(\mrm{Gal}(k^{\mrm{ab}}/M))
=\pi^{f_{Kk/k}\widehat{\mbb{Z}}}\times \cO_k^{\times}$.
Since we have $\mrm{Art}^{-1}_k(\mrm{Gal}(k^{\mrm{ab}}/k_{\pi}))=\pi^{\widehat{\mbb{Z}}}$,
we obtain $\mrm{Art}^{-1}_k(\mrm{Gal}(k^{\mrm{ab}}/Mk_{\pi}))=\pi^{f_{Kk/k}\widehat{\mbb{Z}}}$.
If we denote by $\mrm{Res}_{Kk/k}$ the natural restriction map from 
$\mrm{Gal}((Kk)^{\mrm{ab}}/Kk)$ to $\mrm{Gal}(k^{\mrm{ab}}/k)$, 
it is not difficult to check 
$\mrm{Res}_{Kk/k}^{-1}(\mrm{Gal}(k^{\mrm{ab}}/Mk_{\pi}))
=\mrm{Gal}((Kk)^{\mrm{ab}}/Kk_{\pi})$.
Thus we find 
$\mrm{Art}^{-1}_{Kk}(\mrm{Gal}((Kk)^{\mrm{ab}}/Kk_{\pi}))=\mrm{Nr}_{Kk/k}^{-1}(\pi^{f_{Kk/k}\widehat{\mbb{Z}}})$.
Now the lemma follows from 
\begin{align*}
& \ \mrm{Min}\left\{ \sum^n_{i=1} v_p(\psi_i(\sigma)-1) \mid \sigma\in G_{Kk_{\pi}} \right\}  \\
= & \
\mrm{Min}\left\{ \sum^n_{i=1} v_p(\psi_{i, Kk}\circ \mrm{Art}^{-1}_{Kk}(\sigma)-1) 
\mid \sigma\in \mrm{Gal}((Kk)^{\mrm{ab}}/Kk_{\pi}) \right\}.
\end{align*}
\end{proof}

We recall an observation of Conrad.
We denote by $K_0$ 
the maximal unramified extension of  $\mbb{Q}_p$ contained in $K$
and set $D^K_{\mrm{cris}}(\ast) := (B_{\mrm{cris}}\otimes_{\mbb{Q}_p} \ast)^{G_K}$. 
We denote by $\underline{K}^{\times}$ 
the Weil restriction  $\mrm{Res}_{K/\mbb{Q}_p}(\mbb{G}_m)$.
\begin{proposition}
\label{LA2}
Let $\psi\colon G_K\to M^{\times}$ be 
a continuous character. 

\noindent
{\rm (1)} $M(\psi)$ is crystalline if and  only if 
there exists a {\rm(}necessarily unique{\rm )} $\mbb{Q}_p$-homomorphism 
$\psi_{\rm alg}\colon \underline{K}^{\times}\to \underline{M}^{\times}$
such that $\psi_K$ and $\psi_{\rm alg}$ {\rm (}on $\mbb{Q}_p$-points{\rm )} 
coincides on $\cO_K^{\times} (\subset \underline{K}^{\times}(\mbb{Q}_p))$.

\noindent
{\rm (2)} 
Assume that $M(\psi)$ is crystalline and let $\psi_{\rm alg}$ be as in {\rm (1)}. 
{\rm (}Note that $M(\psi^{-1})$ is also  crystalline.{\rm )}
Then, the filtered $\vphi$-module 
$D^K_{\mrm{cris}}(M(\psi^{-1}))=(B_{\mrm{cris}}\otimes_{\mbb{Q}_p} M(\psi^{-1}))^{G_K}$
over $K$ is free of rank $1$ over $K_0\otimes_{\mbb{Q}_p} M$ 
and its $K_0$-linear endomorphism $\vphi^{f_K}$ 
is given by the action of the product 
$\psi_K(\pi_K)\cdot \psi_{\rm alg}^{-1}(\pi_K)\in M^{\times}$.  
Here, $\pi_K$ is any uniformizer of $K$.
\end{proposition}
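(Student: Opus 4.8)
The plan is to reduce both assertions to two elementary cases---unramified characters and characters of the shape $\sigma\circ\chi_{\pi_K}$ with $\sigma\in\Gamma_K$---for which everything is classical, and then to propagate by multiplicativity. Throughout I allow myself to enlarge the coefficient field $M$, since neither the crystallinity of $M(\psi)$ nor the existence of $\psi_{\mrm{alg}}$ is affected by a finite extension of $M$; in particular I may assume that $M$ contains the Galois closure of $K$ over $\mbb{Q}_p$.

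For part (1), in the ``only if'' direction I would start from Serre's theory of locally algebraic abelian representations: a crystalline---hence Hodge--Tate---character $\psi$ is locally algebraic, so there exist a $\mbb{Q}_p$-homomorphism $\alpha\colon\underline{K}^{\times}\to\underline{M}^{\times}$ and an open subgroup $U\subseteq\cO_K^{\times}$ with $\psi_K|_U=\alpha|_U$; uniqueness of such an $\alpha$ is automatic because every open subgroup of $\underline{K}^{\times}(\mbb{Q}_p)$ is Zariski dense in the torus $\underline{K}^{\times}$. To obtain the ``if'' direction and to upgrade the agreement from $U$ to all of $\cO_K^{\times}$, I would attach to a given $\mbb{Q}_p$-homomorphism $\psi_{\mrm{alg}}$ an explicit crystalline character $\psi_0\colon G_K\to M^{\times}$ with $\psi_{0,\mrm{alg}}=\psi_{\mrm{alg}}$, as follows: write the base change of $\psi_{\mrm{alg}}$ to $\overline{\mbb{Q}}_p$ as a Galois-equivariant integer matrix $(n_{\tau\sigma})_{\tau\in\Gamma_M,\,\sigma\in\Gamma_K}$, fix $\tau_0\in\Gamma_M$, and set $\psi_0:=\prod_{\sigma\in\Gamma_K}(\sigma\circ\chi_{\pi_K})^{-n_{\tau_0\sigma}}$. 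One checks that this product is $M^{\times}$-valued (using that the row $(n_{\tau_0\sigma})_\sigma$ is invariant under $\mrm{Gal}(\overline{\mbb{Q}}_p/\tau_0(M))$), is crystalline as a product of crystalline characters, and satisfies $\psi_{0,K}|_{\cO_K^{\times}}=\psi_{\mrm{alg}}|_{\cO_K^{\times}}$ since $(\sigma\circ\chi_{\pi_K})_K$ restricts to $u\mapsto\sigma(u)^{-1}$ on $\cO_K^{\times}$. Now $\psi\psi_0^{-1}$ is crystalline and trivial on the open subgroup $U\subseteq\cO_K^{\times}$, hence has finite image on inertia; but a crystalline character with finite inertia image has vanishing Hodge--Tate weight and is therefore unramified, so $(\psi\psi_0^{-1})_K$ is trivial on all of $\cO_K^{\times}$. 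This simultaneously gives that $\psi=(\psi\psi_0^{-1})\cdot\psi_0$ is crystalline (the ``if'' direction) and that $\psi_K|_{\cO_K^{\times}}=\psi_{0,K}|_{\cO_K^{\times}}=\psi_{\mrm{alg}}|_{\cO_K^{\times}}$ (the ``only if'' direction).

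For part (2), I would first record the formal part: since $M(\psi^{-1})$ is crystalline and $M$-linear of $M$-dimension $1$, the module $D^K_{\mrm{cris}}(M(\psi^{-1}))$ is free of rank $1$ over $K_0\otimes_{\mbb{Q}_p}M$; the Frobenius $\vphi$ is $M$-linear and semilinear over $K_0$ with respect to the absolute Frobenius, which has order $f_K$ on $K_0$, so $\vphi^{f_K}$ is $K_0$-linear, hence $(K_0\otimes_{\mbb{Q}_p}M)$-linear, hence multiplication by a scalar $\delta_\psi\in(K_0\otimes_{\mbb{Q}_p}M)^{\times}$. Both $\psi\mapsto\delta_\psi$ and $\psi\mapsto\psi_K(\pi_K)\psi_{\mrm{alg}}^{-1}(\pi_K)$ are multiplicative under tensor product of characters, so the factorization $\psi=\psi_0\cdot(\psi\psi_0^{-1})$ from part (1), with $\psi_0$ a product of integer powers of characters $\sigma\circ\chi_{\pi_K}$ and $\psi\psi_0^{-1}$ unramified, reduces the identity $\delta_\psi=\psi_K(\pi_K)\psi_{\mrm{alg}}^{-1}(\pi_K)$ to those two types. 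For an unramified $\lambda$ one has $\psi_{\mrm{alg}}=1$, and the classical description of $D_{\mrm{cris}}$ of an unramified representation gives $\vphi^{f_K}=\lambda(\mrm{Frob}_K)=\lambda_K(\pi_K)$ under the arithmetic normalization of $\mrm{Art}_K$. For $\sigma\circ\chi_{\pi_K}$ one has $(\sigma\circ\chi_{\pi_K})_K(\pi_K)=\sigma(\chi_{\pi_K,K}(\pi_K))=1$ and $\psi_{\mrm{alg}}$ equal to inversion composed with $\sigma$, and the classical computation of $D_{\mrm{cris}}$ of the $p$-adic Tate module of the Lubin--Tate formal group attached to $\pi_K$ (equivalently, in terms of a suitably normalized Lubin--Tate period in $B_{\mrm{cris}}$), together with functoriality in $\sigma$, gives $\vphi^{f_K}=\sigma(\pi_K)=\psi_K(\pi_K)\psi_{\mrm{alg}}^{-1}(\pi_K)$. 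Multiplying the two cases yields $\delta_\psi=\psi_K(\pi_K)\psi_{\mrm{alg}}^{-1}(\pi_K)$, which in particular lies in the subgroup $M^{\times}$ of $(K_0\otimes_{\mbb{Q}_p}M)^{\times}$.

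The step I expect to be the main obstacle is the bookkeeping of normalizations in the Lubin--Tate computation: arranging that the Hodge--Tate weight, the direction of the Galois action on the Tate module ($\chi_{\pi_K}$ versus $\chi_{\pi_K}^{-1}$), the arithmetic-versus-geometric normalization of $\mrm{Art}_K$, and the covariant-versus-contravariant convention for $D_{\mrm{cris}}$ all cohere so that $\delta_\psi$ comes out as $\psi_K(\pi_K)\psi_{\mrm{alg}}^{-1}(\pi_K)$ and not its inverse or an off-by-a-unit variant. A convenient way to pin this down is to test the asserted formula in the case $K=\mbb{Q}_p$, $\psi=\chi_{\mrm{cyc}}$, where $\psi_K(p)=1$ and $\psi_{\mrm{alg}}^{-1}(p)=p$, so that one must recover $\vphi=p$ on $D_{\mrm{cris}}(\mbb{Q}_p(\chi_{\mrm{cyc}}^{-1}))$, consistent with $\vphi(t)=pt$.
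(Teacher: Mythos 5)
The paper does not prove this proposition at all; it simply cites Proposition B.4 of Conrad's preprint \cite{Co}. Your sketch supplies an actual argument, and it is the standard one that underlies Conrad's statement (going back to Serre and Tate): reduce a crystalline abelian character, via Serre's locally algebraic theory, to a product of an unramified character and integer powers of conjugate Lubin--Tate characters $\sigma\circ\chi_{\pi_K}$, for which both the agreement on $\cO_K^{\times}$ and the Frobenius eigenvalue on $D_{\mrm{cris}}$ can be computed by hand, and then conclude by multiplicativity. The logic is sound. Two places deserve slightly more than assertion. First, in part (1) the pivot of the ``only if'' direction is that a crystalline character with finite inertia image is unramified; you correctly pass through ``all Hodge--Tate weights are $0$,'' but the implication ``crystalline with weight $0$ $\Rightarrow$ unramified'' is itself a nontrivial (if standard) fact of $p$-adic Hodge theory, resting on the filtration on $D_{\mrm{cris}}$ being trivial so that one has a unit-root $\varphi$-module, and it should be cited or proved rather than taken for granted. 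Second, in part (2) the freeness of $D^K_{\mrm{cris}}(M(\psi^{-1}))$ over $K_0\otimes_{\mbb{Q}_p}M$ does not follow from the dimension count alone, since $K_0\otimes_{\mbb{Q}_p}M$ need not be a field; one needs the general freeness result for $D_{\mrm{cris}}$ of $M$-linear crystalline representations (or an argument over each factor of the product ring). Your $K=\mbb{Q}_p$, $\psi=\chi_{\mrm{cyc}}$, $\vphi(t)=pt$ sanity check is exactly the right device for pinning down the normalization, which, as you note, is the only genuinely delicate point.
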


\begin{proof}
This is Proposition B.4 of \cite{Co}.
\end{proof}

Let $\psi\colon G_K\to M^{\times}$ be a crystalline character. 
For any $\sigma\in \Gamma_M$, let $\chi_{\sigma M}\colon I_{\sigma M}\to \sigma M^{\times}$ 
be the restriction to the inertia $ I_{\sigma M}$ of the Lubin-Tate character associated  
with any choice of uniformizer of $\sigma M$ 
(it depends on the choice of a uniformizer of $\sigma M$,
but its restriction to the inertia subgroup does not).
Assume that $K$ contains the Galois Closure of $M/\mbb{Q}_p$. 
Then, we have 
$$
\psi = 
\prod_{\sigma \in \Gamma_M} \sigma^{-1} \circ \chi_{\sigma M}^{h_{\sigma}}
$$
on the inertia $I_K$ for some integer $h_{\sigma}$. 
Equivalently,  the character
$\psi_{\rm alg}$ on $\mbb{Q}_p$-points  coincides with 
$\prod_{\sigma \in \Gamma_M} \sigma^{-1} \circ \mrm{Nr}_{K/\sigma M}^{-h_{\sigma}}$. 
Note that $\{h_{\sigma} \mid \sigma\in \Gamma_M \}$ 
is the set of Hodge-Tate weights of $M(\psi)$, 
that is, $C\otimes_{\mbb{Q}_p} M(\psi)\simeq \oplus_{\sigma \in \Gamma_M} C(h_{\sigma})$
where $C$ is the completion of $\overline{\mbb{Q}}_p$.

For integers $d,h$ and a $p$-adic field $M$, 
we define a constant $C(d,M,h)$ by 
\begin{equation}
\label{const0}
C(d,M,h) := \ v_p(d/d_M) + h +  \frac{d_M}{2}\left(d_M+v_p(e_M)-\frac{1}{e_M}+v_p(2)(d_M-1)\right).
\end{equation}

\begin{theorem}
\label{key:est2}
Let $\psi_1,\dots ,\psi_n\colon G_K\to M^{\times}$ be crystalline characters
and $h\ge 0$ an integer.
Assume that $M$ is a Galois extension of $\mbb{Q}_p$ and 
$K$ contains $M$.
Suppose that, for each $i$, we have 
$$
\psi_i= 
\prod_{\sigma \in \Gamma_M} \sigma^{-1} \circ \chi_{M}^{h_{i,\sigma}}
$$
on the inertia $I_K$; thus $\{h_{i,\sigma} \mid \sigma\in \Gamma_M \}$ 
is the Hodge-Tate weights of $M(\psi_i)$. 
We assume the following conditions.
\begin{itemize}
\item[{\rm (i)}]  $\{h_{i, \sigma} \mid \sigma\in \Gamma_M \}$ contains 
at least two different integers for each $i$. 
{\rm (}In particular, we have $M\not=\mbb{Q}_p$.{\rm )}
\item[{\rm (ii)}] We have $\mrm{Min}\left\{ v_p(h_{i,\sigma}-h_{i,\tau}) \mid \sigma,\tau\in \Gamma_M \right\} \le h$
for each $i$.
\end{itemize}

\vspace{2mm}

\noindent
{\rm (1)} There exists an element $\hat{\omega}\in \mrm{ker}\ \mrm{Nr}_{M/\mbb{Q}_p}$
with the property that, for every $1\le i\le n$, it holds that 
\begin{equation}
\label{eval'}
1+v_p(2)\le v_p(\psi_{i,K}(\hat\omega)^{-1}-1) \le \delta_{(i)}+C(d_{K},M,h).
\end{equation}
Here, 
$$
\delta_{(i)}:= \left\{
\begin{array}{cl}
0 &\quad 
\mbox{if $i=1,2$}, \cr
2i-5 &\quad 
\mbox{if $i\ge 3$}.
\end{array}
\right.
$$

\noindent
{\rm (2)} Let $\hat\omega$ be as in {\rm (1)}.
For any $x\in K^{\times}$, 
there exists an integer $0\le s(x)\le n$ with the property 
that, for every $1\le i \le n$, it holds that 
\begin{equation}
\label{nonunitcase'}
v_p(\psi_{i,K}(x\hat\omega^{p^{s(x)}})^{-1}-1)
\le 
n+\delta_{(i)}+C(d_{K},M,h).
\end{equation} 
\end{theorem}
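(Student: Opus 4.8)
The plan is to reduce part~(2) to part~(1) by a pigeonhole argument, and to prove part~(1) via a Hilbert-90 construction of norm-one units together with an induction on $n$. For part~(1), the first step is to rewrite the quantity to be estimated: since $M/\mathbb{Q}_p$ is Galois and $M\subset K$, the norm $\mathrm{Nr}_{K/M}$ restricts on $M^{\times}$ to the $[K:M]$-th power map, and every element of $\ker\mathrm{Nr}_{M/\mathbb{Q}_p}$ is automatically a unit, so Proposition~\ref{LA2} gives, for $\hat\omega\in\ker\mathrm{Nr}_{M/\mathbb{Q}_p}$,
$$
\psi_{i,K}(\hat\omega)^{-1}=\prod_{\sigma\in\Gamma_M}\sigma^{-1}(\hat\omega)^{[K:M]\,h_{i,\sigma}},\qquad [K:M]=d_K/d_M .
$$
Passing to $p$-adic logarithms (with due care at $p=2$), the logarithm of this element is $[K:M]\,\theta_i(\log\hat\omega)$, where $\theta_i:=\sum_{\sigma\in\Gamma_M}h_{i,\sigma}\,\sigma^{-1}$ is regarded as an operator on $M$. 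So the goal becomes: find $\hat\omega\in\ker\mathrm{Nr}_{M/\mathbb{Q}_p}$ — equivalently, a trace-zero element $x=\log\hat\omega$ of a lattice in $\cO_M$ — with $v_p(\theta_i(x))$ in a controlled window for every $i$. Condition~(i) says precisely that $\theta_i$ is not a scalar multiple of the norm element $\sum_\sigma\sigma^{-1}$, i.e.\ that $\theta_i$ acts nontrivially on the trace-zero hyperplane of $M$ (nonzero since $M\neq\mathbb{Q}_p$); as a $\mathbb{Q}_p$-vector space is not covered by finitely many proper subspaces, this alone yields $\theta_i(x)\neq0$ for a suitable trace-zero $x$.

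To bound $v_p(\theta_i(x))$ from above I would use condition~(ii), which furnishes, for each $i$, embeddings $\sigma_i,\tau_i\in\Gamma_M$ with $v_p(h_{i,\sigma_i}-h_{i,\tau_i})\le h$ (so in particular $h_{i,\sigma_i}\neq h_{i,\tau_i}$). The Hilbert-90 unit $\hat\omega_i=\sigma_i(\zeta)\,\tau_i(\zeta)^{-1}$ lies automatically in $\ker\mathrm{Nr}_{M/\mathbb{Q}_p}$, and $\psi_{i,K}(\hat\omega_i)^{-1}=\prod_{\nu\in\Gamma_M}\nu(\zeta)^{[K:M]\,(h_{i,\sigma_i\nu^{-1}}-h_{i,\tau_i\nu^{-1}})}$, whose $\nu=\mathrm{id}$ exponent has $p$-valuation $v_p(d_K/d_M)+v_p(h_{i,\sigma_i}-h_{i,\tau_i})\le v_p(d_K/d_M)+h$; this is the source of the first two summands of $C(d_K,M,h)$. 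Running $\zeta$ over a fixed $\mathbb{Z}_p$-basis of $\cO_M$ and keeping the best choice, the remaining, uncontrolled Galois-conjugate contributions can cancel only by an amount measured by a generalized Vandermonde determinant for $M/\mathbb{Q}_p$, i.e.\ essentially by $\tfrac12 v_p(\mathrm{disc}_{M/\mathbb{Q}_p})$; this is exactly what the term $\tfrac{d_M}{2}\bigl(d_M+v_p(e_M)-\tfrac1{e_M}+v_p(2)(d_M-1)\bigr)$ bounds (the factor $\binom{d_M}{2}$ counting pairs of embeddings, the $v_p(2)$-term encoding the extra wild contribution at $p=2$). This settles $n=1$.

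For general $n$ I would build $\hat\omega$ inductively as a product $\prod_i\hat\omega_i^{p^{a_i}}$ with $0=a_1\le a_2\le\cdots$, the exponents chosen so that for each $j$ the $j$-th factor dominates $\psi_{j,K}(\hat\omega)^{-1}-1$; the accumulated shifts produce exactly the correction $\delta_{(i)}$, and a final replacement $\hat\omega\mapsto\hat\omega^{p^m}$ with $m$ minimal pushes every $v_p(\psi_{i,K}(\hat\omega)^{-1}-1)$ past $\tfrac1{p-1}$ into the stable range of the unit filtration, giving the lower bound $1+v_p(2)$ at the cost of an $O(1)$ overshoot that is again absorbed into $C(d_K,M,h)$. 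Part~(2) then follows formally: with $u_i:=\psi_{i,K}(\hat\omega)^{-1}$, the bound $v_p(u_i-1)\ge1+v_p(2)>\tfrac1{p-1}$ forces $v_p(u_i^{p^s}-1)=v_p(u_i-1)+s$ for all $s\ge0$, so for $x\in K^{\times}$ one has $v_p(\psi_{i,K}(x\hat\omega^{p^s})^{-1}-1)=v_p\bigl(\psi_{i,K}(x)^{-1}u_i^{p^s}-1\bigr)$, which equals $\min\{b_i,\,v_p(u_i-1)+s\}$ unless these coincide, where $b_i:=v_p(\psi_{i,K}(x)^{-1}-1)$ (and $b_i\le0$, so this $i$ is harmless, when $\psi_{i,K}(x)^{-1}\not\equiv1$). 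Since each $i$ excludes at most one integer value $s=b_i-v_p(u_i-1)$, at most $n$ values of $s$ are forbidden, so some $s(x)\in\{0,1,\dots,n\}$ serves all $i$; for it, $v_p(\psi_{i,K}(x\hat\omega^{p^{s(x)}})^{-1}-1)\le v_p(u_i-1)+s(x)\le\bigl(\delta_{(i)}+C(d_K,M,h)\bigr)+n$, as claimed.

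The main obstacle is the $n=1$ case of part~(1): extracting the exact constant $C(d_K,M,h)$ requires quantitative control of the Galois-conjugate contributions on which condition~(ii) says nothing (the Vandermonde/discriminant input), of the failure of $\cO_M$ to be free over $\mathbb{Z}_p[\mathrm{Gal}(M/\mathbb{Q}_p)]$, and of the unit filtration at $p=2$; propagating this estimate through the inductive $p$-power-scaled construction to obtain precisely the stated $\delta_{(i)}$ is then careful but essentially routine bookkeeping.
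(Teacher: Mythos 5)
Your outline coincides with the paper's proof in all of its structural elements: the units $\omega^{\tau}_{\ell}=\tau\omega_{\ell}/\omega_{\ell}$ with $\omega_{\ell}=\exp((x')^{\ell-1})$ (where $\cO_M=\mbb{Z}_p[x]$ and $x'=p'x$) are exactly your Hilbert-90 norm-one elements; Lemma \ref{exp1} gives $\psi_{r,K}(\omega^{\tau}_{\ell})^{-1}=\exp(y^{\tau}_{r,\ell})$ and turns the estimate into one on $p$-adic logarithms as you propose; Lemma \ref{Lem:bound} is your quantitative step, carried out via the $d_M\times d_M$ Vandermonde matrix $X$ with $(i,j)$-entry $(\tau_i^{-1}x')^{j-1}$ and the bound $v_p(\det X)\le \frac{d_M}{2}\bigl(d_M+v_p(e_M)-\frac{1}{e_M}+v_p(2)(d_M-1)\bigr)$, exactly your ``generalized Vandermonde/discriminant'' input; the inductive $p$-power exponents $s_r$ produce the correction $\delta_{(i)}$; and part (2) is the same pigeonhole over $s\in\{0,\dots,n\}$. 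So the route is genuinely the same, not an alternative.

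The one place where your description would fail to reproduce the exact stated constant is the lower bound $1+v_p(2)$. You propose to secure it a posteriori by replacing $\hat\omega$ with $\hat\omega^{p^m}$ for a minimal $m$; but this raises every $v_p(\psi_{i,K}(\hat\omega)^{-1}-1)$ by $m$, so you would end with $m+\delta_{(i)}+C(d_K,M,h)$ instead of $\delta_{(i)}+C(d_K,M,h)$, and the overshoot is \emph{not} already accounted for inside $C$. The paper instead gets the lower bound for free by scaling \emph{inside} the exponential: it uses $x'=p'x$ with $p'=p$ or $4$, so every $y^{\tau}_{i,\ell}$ with $\ell\ge2$ automatically satisfies $v_p(y^{\tau}_{i,\ell})\ge v_p(p')=1+v_p(2)$, and the cost $(p')^{\binom{d_M}{2}}$ of this scaling in $\det X$ is precisely the summand $v_p(2)(d_M-1)$ already present in $C(d_K,M,h)$. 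No after-the-fact correction is needed, and switching to this scaling removes the gap. (Also, the failure of $\cO_M$ to be free over $\mbb{Z}_p[\mrm{Gal}(M/\mbb{Q}_p)]$ never enters: the paper only uses that $\cO_M$ is monogenic over $\mbb{Z}_p$, taking $\cO_M=\mbb{Z}_p[x]$.)
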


\begin{proof}
Take an element $x\in \cO_M$ such that $\cO_M=\mbb{Z}_p[x]$.
We set $p':=p$ or $p':=4$ if $p\not=2$ or $p=2$, respectively,
and  put $x'=p'x$.
Set $m^{\tau}_{r, \sigma}:= d_{K/M}(h_{r, \tau\sigma}-h_{r, \sigma})$
for $1\le r\le n$ and $\sigma,\tau \in \Gamma_M$.
We also set 
\begin{equation*}
y^{\tau}_{r,\ell}  
:=\sum_{\sigma\in \Gamma_M} m^{\tau}_{r, \sigma} (\sigma^{-1}x')^{\ell-1} 
\end{equation*} 
for  $1\le \ell\le d_M$. 
(Note that $y^{\tau}_{r,1}=0$.)
Set
\begin{equation*}
\omega_{\ell}:=\mrm{exp}((x')^{\ell-1})\quad  \mbox{and} \quad  
\omega^{\tau}_{\ell}:=\frac{\tau \omega_{\ell}}{\omega_{\ell}}
\end{equation*}
for any  $1\le \ell\le d_M$ and $\tau\in \Gamma_M$.
It holds $\omega^{\tau}_{\ell}\in \ker \mrm{Nr}_{M/\mbb{Q}_p}$
by construction.
\begin{lemma}
\label{exp1}
We have $\mrm{exp}(y^{\tau}_{r,\ell})=\psi_{r,K}(\omega^{\tau}_{\ell})^{-1}$.
\end{lemma}
\begin{proof}
We see 
$$
\psi_{r,K}(\omega_{\ell})^{-1}
=\prod_{\sigma \in \Gamma_M} 
\sigma^{-1} \circ \mrm{Nr}_{K/M}(\omega_{\ell})^{h_{r, \sigma}}
=\left(\prod_{\sigma \in \Gamma_M} 
\sigma^{-1}\omega_{\ell}^{h_{r, \sigma}}\right)^{d_{K/M}}.
$$
We also have $\psi_{r,K}(\tau\omega_{\ell})^{-1}
=\left(\prod_{\sigma \in \Gamma_M} 
\sigma^{-1}\tau\omega_{\ell}^{h_{r, \sigma}}\right)^{d_{K/M}}
=\left(\prod_{\sigma \in \Gamma_M} 
\sigma^{-1}\omega_{\ell}^{h_{r, \tau\sigma}}\right)^{d_{K/M}}$.
Thus we have
$$
\psi_{r,K}(\omega^{\tau}_{\ell})^{-1}
=\left(\prod_{\sigma \in \Gamma_M} 
\sigma^{-1}\omega_{\ell}^{h_{r, \tau\sigma}-h_{r, \sigma}}\right)^{d_{K/M}}
=\prod_{\sigma\in \Gamma_M} \sigma^{-1} \omega_{\ell}^{m^{\tau}_{r, \sigma}}.
$$
On the other hand, we have 
$$
\mrm{exp}(y^{\tau}_{r,\ell})
=\mrm{exp}\left( \sum_{\sigma\in \Gamma_M} 
m^{\tau}_{r, \sigma} (\sigma^{-1}x')^{\ell-1} \right)
=\prod_{\sigma\in \Gamma_M} \mrm{exp}((\sigma^{-1}x')^{\ell-1})^{m^{\tau}_{r, \sigma}}
= \prod_{\sigma\in \Gamma_M} \sigma^{-1} \omega_{\ell}^{m^{\tau}_{r, \sigma}}.
$$
Thus we obtain the lemma.
\end{proof}
We furthermore need the following
evaluation.
\begin{lemma}
\label{Lem:bound}
For each $1\le r\le n$,  
there exist $\tau_r\in \Gamma_M$ and  an integer $2\le \ell_r \le d_M$
such that 
\begin{equation*}
v_p(y^{\tau_r}_{r,\ell_r})\le C(d_{K},M,h).
\end{equation*}
\end{lemma}
\begin{proof}
In this proof, we fix $r$. 
By the assumption (i), 
there exist $\tau_1,\tau_2\in \Gamma_M$ such that $h_{r,\tau_1}\not=h_{r,\tau_2}$.
We choose such $\tau_1$  and $\tau_2$ so that $v_p(h_{r,\tau_1}-h_{r,\tau_2})
= \mrm{Min}\left\{ v_p(h_{r,\sigma}-h_{r,\tau}) \mid \sigma,\tau\in \Gamma_M \right\} $.
Set $\tau:=\tau_2\tau_1^{-1}\in \Gamma_{M}$.
We write $\Gamma_{M}=\{\tau_1,\tau_2,\dots ,\tau_{d_M}\}$.
Note that $m^{\tau}_{r,\tau_1}=d_{K/M}(h_{r,\tau_2}-h_{r,\tau_1})$ is not zero.
We denote by $X\in M_d(\cO_M)$ the matrix whose $(i,j)$-component 
is $(\tau_i^{-1}x')^{j-1}$.
Then we have 
\begin{equation}
\label{matrix}
\begin{pmatrix}
y^{\tau}_{r,1} & \cdots & y^{\tau}_{r,d_M}  
\end{pmatrix}
=
\begin{pmatrix}
m^{\tau}_{r,\tau_1} & \cdots & m^{\tau}_{r,\tau_{d_M}}  
\end{pmatrix}
X
\end{equation}
and the determinant $\det X$ of $X$ is 
$\prod_{1\le i< j\le d_M} (\tau_j^{-1}x'-\tau_i^{-1}x') 
=(p')^{\frac{d_M(d_M-1)}{2}} \prod_{1\le i< j\le  d_M} 
(\tau_j^{-1}x-\tau_i^{-1}x)$.
We also have 
\begin{align*}
v_p\left(\prod_{1\le i< j\le d_M} 
(\tau_j^{-1}x-\tau_i^{-1}x)\right)
& = \sum_{1\le i< j\le d_M}  v_p\left(\tau_j^{-1}x-\tau_i^{-1}x\right) \\
& = \frac{1}{2}\sum_{1\le i, j\le d_M, i\not=j}  
v_p\left(\tau_j^{-1}x-\tau_i^{-1}x\right)\\
& = \frac{d_M}{2}v_p(\mcal{D}_{M/\mbb{Q}_p})
\le  \frac{d_M}{2}\left(1+v_p(e_{M})-\frac{1}{e_{M}}\right).
\end{align*}
(cf.\ \cite[Chapter 3, Section 6, Proposition 13]{Se1}),\ 
where 
$\mcal{D}_{M/\mbb{Q}_p}$ is the differential of $M/\mbb{Q}_p$.
We find 
\begin{equation}
\label{detX}
v_p(\det X)\le 
\frac{d_M}{2}\left(d_M+v_p(e_{M})-\frac{1}{e_{M}}
+v_p(2)(d_M-1)\right).
\end{equation}
By \eqref{matrix},  
we have $m^{\tau}_{r,\tau_1}\det X
=\sum^{d_M}_{\ell=1}y^{\tau}_{r,\ell}x_{\ell}$ for some $x_{\ell}\in \cO_M$,
which gives the fact that 
there exists an integer  $\ell_r=\ell$ with the property that 
$v_p(y^{\tau}_{r,\ell})\le v_p(m^{\tau}_{r,\tau_1}\det X)$. By \eqref{detX},
we have 
$$
v_p(y^{\tau}_{r,\ell}) \le  
v_p(d_{K/M})+v_p(h_{r,\tau_1}-h_{r,\tau_2})+v_p(\det X)\le C(d_{K},M,h)
$$
as desired.
We remark that $\ell$ is not equal to $1$ since 
$y^{\tau}_{r,1}$ is zero. 
\end{proof}

Now we return to the proof of Theorem \ref{key:est2}.
Take $\tau_r$ and  $\ell_r$ as in Lemma \ref{Lem:bound} with 
the additional condition that  
\begin{equation}
\label{y'}
v_p(y^{\tau_r}_{r,\ell_r})
=\mrm{Min}\{ v_p(y^{\tau}_{r,\ell})  \mid \tau\in \Gamma_M, 2\le \ell\le d_M\}. 
\end{equation}
Here we consider an element  $\hat\omega\in \ker \mrm{Nr}_{M/\mbb{Q}_p}$ 
which is of the form 
$\hat\omega =\prod^{n}_{r=1} (\omega^{\tau_r}_{\ell_r})^{s_r}$,
where $s_r$ is defined inductively by the following.
$$
(s_1,s_2)= \left\{
\begin{array}{cl}
(0,1) &\quad 
\mbox{if $v_p(y^{\tau_1}_{1,\ell_1})= v_p(y^{\tau_2}_{1,\ell_2})$}, \cr
(1,0) &\quad \mbox{if $v_p(y^{\tau_1}_{1,\ell_1})\not= v_p(y^{\tau_2}_{1,\ell_2})$ and 
$v_p(y^{\tau_1}_{2,\ell_1}) = v_p(y^{\tau_2}_{2,\ell_2})$}, \cr 
(1,1) &\quad 
\mbox{if $v_p(y^{\tau_1}_{1,\ell_1})\not= v_p(y^{\tau_2}_{1,\ell_2})$
and $v_p(y^{\tau_1}_{2,\ell_1})\not= v_p(y^{\tau_2}_{2,\ell_2})$.}
\end{array}
\right.
$$

$$
s_3= \left\{
\begin{array}{cl}
p &\quad 
\mbox{if $v_p(s_1y^{\tau_1}_{3,\ell_1}+s_2y^{\tau_2}_{3,\ell_2})\not= v_p(py^{\tau_3}_{3,\ell_3})$}, \cr
p^2 &\quad \mbox{if $v_p(s_1y^{\tau_1}_{3,\ell_1}+s_2y^{\tau_2}_{3,\ell_2})= v_p(py^{\tau_3}_{3,\ell_3})$.}
\end{array}
\right.
$$
For $r\ge 4$, 
$$
s_r= \left\{
\begin{array}{cl}
ps_{r-1} &\quad 
\mbox{if $v_p(\sum^{r-1}_{j=1} s_j y^{\tau_j}_{r,\ell_j})\not= v_p(ps_{r-1}y^{\tau_r}_{r,\ell_r})$}, \cr
p^2s_{r-1} &\quad 
\mbox{if $v_p(\sum^{r-1}_{j=1} s_j y^{\tau_j}_{r,\ell_j})= v_p(ps_{r-1}y^{\tau_r}_{r,\ell_r})$}.
\end{array}
\right.
$$
We calim that we have
$$
1+v_p(2)\le v_p\left(\sum^{n}_{r=1} s_ry^{\tau_r}_{i,\ell_r}\right) \le \delta_{(i)}+C(d_{K},M,h)
$$
for any $i$, where 
$\delta_{(i)}$ is as in the statement (1).  
The inequality $1+v_p(2)\le v_p\left(\sum^{n}_{r=1} s_ry^{\tau_r}_{i,\ell_r}\right)$
is clear since we always have  $1+v_p(2)\le v_p(y^{\tau}_{i,\ell})$
by definition of $y^{\tau}_{i,\ell}$.
We show $v_p\left(\sum^{n}_{r=1} s_ry^{\tau_r}_{i,\ell_r}\right) \le \delta_{(i)}+C(d_{K},M,h)$
by induction on $i$.
\begin{itemize}
\item Suppose either $i=1$ or $i=2$.
By  \eqref{y'}  and the inequality 
$0< v_p(s_r)$ for $r\ge 3$,
it is not difficult to check  
$v_p\left(\sum^{n}_{r=1} s_ry^{\tau_r}_{i,\ell_r}\right)
=v_p(y^{\tau_i}_{i,\ell_i})$.
Furthermore, we have 
$v_p(y^{\tau_i}_{i,\ell_i}) 
\le C(d_K,M,h)= \delta_{(i)}+C(d_K,M,h)$ by Lemma \ref{Lem:bound}.

\item Suppose $i\ge 3$. By definition of $s_i$
we have 
$v_p\left(\sum^{i-1}_{r=1} s_ry^{\tau_r}_{i,\ell_r}\right)\not= v_p(s_iy^{\tau_i}_{i,\ell_i})$.
We also have 
$v_p\left(\sum^{n}_{r=i} s_ry^{\tau_r}_{i,\ell_r}\right)
=v_p(s_iy^{\tau_i}_{i,\ell_i})$
since $v_p(s_i y^{\tau_i}_{i,\ell_i})<v_p(s_r y^{\tau_r}_{i,\ell_r})$ for $i<r$.
Hence, it follows from Lemma \ref{Lem:bound} that 
we have  
\begin{align*}
v_p\left(\sum^{n}_{r=1} s_ry^{\tau_r}_{i,\ell_r}\right)
& = \mrm{Min}\left\{ v_p\left(\sum^{i-1}_{r=1} s_ry^{\tau_r}_{i,\ell_r}\right),  
v_p(s_iy^{\tau_i}_{i,\ell_i}) \right\} \\
& \le  v_p(ps_{i-1}y^{\tau_i}_{i,\ell_i})
\le 1+v_p(s_{i-1})+C(d_K,M,h)
\end{align*}
if $i\ge 4$.
Since we have $v_p(s_{i-1})\le 2(i-3)$ if $i\ge 4$,
the  claim  for $i\ge 4$ follows.
The claim for  $i=3$ follows by a similar manner; 
we have  
$v_p\left(\sum^{n}_{r=1} s_ry^{\tau_r}_{3,\ell_r}\right)
\le  v_p(py^{\tau_3}_{3,\ell_3})
\le 1+C(d_K,M,h)
=\delta_{(3)}+C(d_K,M,h)$.
\end{itemize}
By  construction of $\hat\omega$ and Lemma \ref{exp1},
we see
$$
\psi_{i,K}(\hat\omega)^{-1}
=\prod^{n}_{r=1} \psi_{i,K}(\omega^{\tau_r}_{\ell_r})^{-s_r}
=\prod^{n}_{r=1} \mrm{exp}\left(s_ry^{\tau_r}_{i,\ell_r}\right)
=\mrm{exp}\left(\sum^{n}_{r=1} s_ry^{\tau_r}_{i,\ell_r}\right).
$$
Thus we find 
$v_p(\psi_{i,K}(\hat\omega)^{-1}-1)=
v_p\left(\sum^{n}_{r=1} s_ry^{\tau_r}_{i,\ell_r}\right).$
Therefore, the claim above gives the statement   
(1) of Theorem \eqref{key:est2}.

We show (2).
We set $m_i:=\psi_{i,K}(x)^{-1}-1$ 
and $\theta^{(s)}_i=\psi_{i,K}(\hat\omega^{p^s})^{-1}-1$
for any $s\ge 0$.
It follows from the condition 
$v_p(\psi_{i,K}(\hat\omega)^{-1}-1)\ge 1+v_p(2)$ that 
the equality $v_p(\theta^{(s)}_i)=s+v_p(\theta_i^{(0)})$ holds.
For each $1\le i\le n$,
there exists at most only one integer $s\ge 0$ so that 
$v_p(m_i)=v_p(\theta_i^{(s)})$ 
since $\{v_p(\theta_i^{(s)})\}_s$ is  strictly increasing.
Hence, there exists an integer $0\le s(x)\le n$
with the property that 
$v_p(m_i)\not=v_p(\theta_i^{(s(x))})$ for every $1\le i \le n$
(by "Pigeonhole principle").
With this choice of $s(x)$, we obtain 
$v_p(\psi_{i,K}(x\hat\omega^{p^{s(x)}})^{-1}-1)
=v_p(m_i+\theta_i^{(s(x))}+m_i\theta_i^{(s(x))})\le v_p(\theta_i^{(n)})
=n+v_p(\theta_i^{(0)}).
$
This finishes the proof of (2).
\end{proof}

\if0
\subsection{Some  results on  abelian varieties}

The following is well-known but it will be useful to write down here.
\begin{proposition}
\label{WB}
Let $K$ be a $p$-adic field and $\mbb{F}$ the residue field of $K(\mu_{p^{\infty}})$.

\noindent
{\rm (1)} The order of $\mbb{F}$ is at most $p^{d_K}$.
 
\noindent
{\rm (2)} For  a $g$-dimensional abelian variety  $A$ over $\mbb{F}_K$, 
we have $v_p(\sharp A(\mbb{F}))\le \log_p(1+p^{\frac{d_K}{2}})^{2g}$.
\end{proposition}

\begin{proof}
The assertion (1) follows from the fact that 
$f_{K(\mu_{p^{\infty}})}
=f_{K(\mu_{p^{\infty}})/\mbb{Q}_p(\mu_{p^{\infty}})}$
divides $d_K$.
If we denote by $q$ the order of $\mbb{F}$, 
it follows from the Weil bound that 
$\sharp A(\mbb{F})
\le (1+\sqrt{q})^{2g}
\le (1+p^{\frac{d_K}{2}})^{2g}
$.
This in particular implies  (2).
\end{proof}
\fi

\section{Proof of main theorems}
\label{Mainpart}

The main purpose of this section is to show Theorems \ref{MT:CM}
and \ref{MT:CM:KT} in Introduction.  
As for Theorem \ref{MT:CM}, 
we show a slightly refined statement as follows.
\begin{theorem}
\label{MT:CM:refined}
Let $g>0$ be a positive integer. 
Let  $k$ be a $p$-adic field  with residue cardinality $q_k$ and  $\pi$ a uniformizer of $k$.
Put $p'=p$ or $p'=4$ if $p\not=2$ or $p=2$, respectively.
Let $\mu\ge 1$ be the smallest integer\footnote{If 
$q_k^{-1}\mrm{Nr}_{k/\mbb{Q}_p}(\pi)$ 
is a root of unity, the constant $\mu$ here coincides with $\mu$ 
appeared in Theorem \ref{MT:CM}.} so that
$$
(q_k^{-1}\mrm{Nr}_{k/\mbb{Q}_p}(\pi))^{\mu}\equiv 1 \ \mrm{mod}\ p'.
$$
Assume the following conditions\footnote{The condition (i) here 
depends on the choice of $K$. However, 
the author hopes that this condition would be replaced with
certain one which does not depend on $K$
as (i) in Theorem \ref{MT:CM}.}.
\begin{itemize}
\item[{\rm (i)}] $v_p((q_k^{-1}\mrm{Nr}_{k/\mbb{Q}_p}(\pi))^{\mu}-1)
>g\cdot (2g)!\cdot \Phi(g)H(g) \cdot \mu \cdot d_{Kk/k}f_k$
and 
\item[{\rm (ii)}] $d_k$ is prime to $(2g)!$.
\end{itemize}
Then, for any $g$-dimensional abelian variety $A$ over 
a $p$-adic field $K$ with complex multiplication, 
we have 
$$
A(Kk_{\pi})[p^{\infty}]\subset A[p^C]
$$
where 
$$
C := 2g^2\cdot (2g)!\cdot \Phi(g)H(g)\cdot \mu \cdot d_{Kk}  
+12g^2-18g+10.
$$
In particular, we have 
$$
\sharp A(Kk_{\pi})[p^{\infty}]\le p^{2gC}.
$$
\end{theorem}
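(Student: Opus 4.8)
The plan is to translate the statement, via the CM structure of $A$, into a uniform upper bound on $\mrm{Min}_{\sigma\in G_{K'k_{\pi}}}v_p(\psi(\sigma)-1)$ for the $p$-adic characters $\psi$ attached to $A$ over a suitable finite extension $K'$ of $K$, and then to extract that bound from Proposition \ref{vplem} together with Theorem \ref{key:est2}. First I would pass to a finite extension $K'/K$ with $K\subset K'$ over which $A$ has good reduction, all endomorphisms of $A$ are defined, and which contains the fields of definition $F_v$ ($v\mid p$ a place of the CM field $F$, $[F:\mbb{Q}]=2g$) of the characters occurring in $V_pA$; over $K'$ the representation $V_pA\otimes\overline{\mbb{Q}}_p$ becomes a direct sum of $2g$ crystalline characters $\psi_1,\dots,\psi_{2g}\colon G_{K'}\to\overline{\mbb{Q}}_p^{\times}$, each valued in (a conjugate of) some $F_v$ and with Hodge--Tate weights in $\{0,1\}$. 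If $P\in A(Kk_{\pi})$ has order $p^n$, then $P\in A(K'k_{\pi})$; lifting $P$ to $T_pA$ and decomposing into eigencomponents, one finds an index $i$ and a vector in the $\psi_i$-component of $T_pA\otimes\overline{\mbb{Z}}_p$ whose denominator is bounded by an absolute constant $c_0$ (controlled by the integral $p$-adic Hodge theory of crystalline representations with Hodge--Tate weights $\le 1$), so that $v_p(\psi_i(\sigma)-1)\ge n-c_0$ for every $\sigma\in G_{K'k_{\pi}}$; hence $n\le c_0+\mrm{Min}_{\sigma\in G_{K'k_{\pi}}}v_p(\psi_i(\sigma)-1)$. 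Along the way I would bound $d_{K'k}$: the Galois closure of $F/\mbb{Q}$ has degree dividing $(2g)!$, the potentially-good-reduction part of $K'/K$ is governed by $H(g)$, the roots of unity in $F$ by $\Phi(g)$, and condition (ii) (that $d_k$ is prime to $(2g)!$) ensures these contributions do not interact with $k$, giving $d_{K'k}\le (2g)!\,\Phi(g)H(g)\,d_{Kk}$.

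Next I would split the characters according to whether their Hodge--Tate weights are constant. If $\{h_{i,\sigma}\}$ contains both $0$ and $1$, then (the weight differences being $\pm1$, the invariant $h$ of Theorem \ref{key:est2} may be taken to be $0$) I apply Proposition \ref{vplem} to $\psi_i$ and then Theorem \ref{key:est2}(2) to a suitable element of $\mrm{Nr}_{K'k/k}^{-1}(\pi^{f_{K'k/k}\mbb{Z}})$ obtained by combining a uniformizer with the element $\hat\omega$ of that theorem, to get $\mrm{Min}_{\sigma}v_p(\psi_i(\sigma)-1)\le 2g+\delta_{(i)}+C(d_{K'k},F_v,0)$; since $d_{F_v}\le 2g$, the quantity $C(d_{K'k},F_v,0)$ is bounded by an explicit function of $g$ together with $v_p(d_{K'k})$, which with $2g+\delta_{(i)}$ and $c_0$ is absorbed into the additive constant $12g^2-18g+10$ and a bounded multiple of $d_{K'k}$. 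If instead $\{h_{i,\sigma}\}$ is constant, say $\equiv a_i\in\{0,1\}$, then $\psi_i$ is an unramified twist of $\chi_{\mrm{cyc}}^{a_i}$; since the restriction of the cyclotomic character to $G_{k_{\pi}}$ is the unramified character sending Frobenius to $(q_k^{-1}\mrm{Nr}_{k/\mbb{Q}_p}(\pi))^{\pm1}$, the character $\psi_i$ becomes unramified on $G_{K'k_{\pi}}$ with some Frobenius value $\beta_i$, and by Proposition \ref{vplem} (taking $\omega$ a uniformizer of $K'k$ of norm $\pi^{f_{K'k/k}}$) the quantity to estimate is $\mrm{Min}_{m\ge1}v_p(\beta_i^m-1)$. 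Using Proposition \ref{LA2}(2) one identifies $\beta_i$, up to a root of unity, with $\lambda_i\cdot\mrm{Nr}_{k/\mbb{Q}_p}(\pi)^{-a_if_{K'k/k}}$ for a Weil number $\lambda_i$, and the Weil-pairing relation $\psi_i\psi_i^{\vee}=\chi_{\mrm{cyc}}$ gives $\beta_i\beta_i^{\vee}=(q_k^{-1}\mrm{Nr}_{k/\mbb{Q}_p}(\pi))^{\mp f_{K'k/k}}$, with both $\beta_i$ and $\beta_i^{\vee}$ $p$-adic units.

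The heart of the argument — and the step I expect to be the main obstacle — is this constant-weight (``ordinary'') case. Here $\beta_i^{\vee}$ is an algebraic integer all of whose archimedean absolute values equal $q_{K'k}^{1/2}$, hence is not a root of unity, and the product formula bounds $v_p((\beta_i^{\vee})^{\mu}-1)$ by an explicit multiple of $\mu\,d_{K'k}\le \mu\,(2g)!\,\Phi(g)H(g)\,d_{Kk}$. Condition (i) is calibrated exactly so that $v_p\big((q_k^{-1}\mrm{Nr}_{k/\mbb{Q}_p}(\pi))^{\mu}-1\big)$ exceeds this bound; combining this with $\beta_i\beta_i^{\vee}=(q_k^{-1}\mrm{Nr}_{k/\mbb{Q}_p}(\pi))^{\mp f_{K'k/k}}$ forces $v_p(\beta_i^m-1)$, hence $\mrm{Min}_{\sigma}v_p(\psi_i(\sigma)-1)$, to be bounded by the same quantity, and after accounting for the factor $2g$ coming from $c_0$ and the number of characters this is $\le 2g^2\cdot(2g)!\cdot\Phi(g)H(g)\cdot\mu\cdot d_{Kk}$. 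Taking the maximum over the two cases and adding the additive contributions yields $n\le C$, i.e.\ $A(Kk_{\pi})[p^{\infty}]\subset A[p^C]$. Finally, since $A[p^C]\simeq(\mbb{Z}/p^C\mbb{Z})^{2g}$, the inclusion gives at once $\sharp A(Kk_{\pi})[p^{\infty}]\le p^{2gC}$. The most delicate points will be the uniform bound on the denominator $c_0$, the bookkeeping that keeps $d_M\le 2g$ when invoking Theorem \ref{key:est2}, and the precise interplay of condition (i) with the product-formula estimate in the ordinary case.
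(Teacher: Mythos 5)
Your overall architecture matches the paper's: pass to a finite extension of degree at most $\Phi(g)H(g)$ over which $A$ has good reduction and all endomorphisms are defined (Theorem \ref{ST-Sil}), decompose $V_p(A)\otimes\overline{\mbb{Q}}_p$ into crystalline characters, split them by whether the Hodge--Tate weights are $\{0,1\}$ or constant, treat the non-constant case with Proposition \ref{vplem} and Theorem \ref{key:est2} (with $x=\pi$, $h=0$), and treat the constant case via unit roots of Frobenius and the Weil bound, with condition (i) calibrated so that the term $(q_k^{-1}\mrm{Nr}_{k/\mbb{Q}_p}(\pi))^{\mu d}-1$ cannot interfere. The constant-weight analysis you sketch is essentially the paper's Lemma \ref{key:Wei}.

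However, there is a genuine gap at your very first reduction, the passage from a $p^n$-torsion point fixed by $G_{K'k_\pi}$ to the inequality $v_p(\psi_i(\sigma)-1)\ge n-c_0$ for a single eigencharacter. You assert that one can project a lift $P\in T_p(A)$ onto a $\psi_i$-eigencomponent of $T_p(A)\otimes\overline{\mbb{Z}}_p$ with ``denominator bounded by an absolute constant $c_0$.'' The lattice $T_p(A)$ does not split integrally into eigenlattices: the idempotents of $M\otimes_{\mbb{Q}_p}M$ (and of a possibly non-maximal CM order) have denominators governed by discriminants/differents, and you give no argument that these are bounded, let alone explicitly, in terms of $g$ and $d_K$ alone --- yet the final constant $C$ must be fully explicit. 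The paper avoids this entirely with a determinant trick: from $(\rho(\sigma)-E)\iota(P)\in p^n\mbb{Z}_p^{\oplus 2g}$ one multiplies by the (integral) adjugate to get $\det(\rho(\sigma)-E)\,P\in p^nT$, whence $n\le c:=\mrm{Min}_\sigma v_p(\det(\rho(\sigma)-E))=\mrm{Min}_\sigma\sum_i d_{F_i}v_p(\psi_i(\sigma)-1)$ with no denominators at all. Note that this produces a \emph{sum} over all characters evaluated at a \emph{common} $\sigma$, which is precisely why Theorem \ref{key:est2} is stated as a simultaneous estimate (the inductive construction of $\hat\omega$ and the pigeonhole choice of $s(x)$); your single-character formulation would not need that simultaneity, so the shape of your reduction is not the one the quoted tools are built for. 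Two smaller bookkeeping errors: Theorem \ref{key:est2} requires the value field $M$ to be Galois over $\mbb{Q}_p$ and contained in the base field, so one must take the compositum of the Galois closures of the $F_i/\mbb{Q}_p$, giving $d_M\mid(2g)!$ rather than your $d_M\le 2g$; and condition (ii) is used specifically to ensure $M\cap k=\mbb{Q}_p$, hence $\ker\mrm{Nr}_{M/\mbb{Q}_p}\subset\ker\mrm{Nr}_{K_Mk/k}$, so that the element $\hat\omega$ produced by Theorem \ref{key:est2} actually lies in the set to which Proposition \ref{vplem} applies --- not merely to control $d_{K'k}$.
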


Our proofs of Theorems \ref{MT:CM:refined} and \ref{MT:CM:KT} 
proceed by similar methods.
As in the previous section, 
we fix an algebraic closure 
$\overline{\mbb{Q}}_p$ of $\mbb{Q}_p$
and suppose that $K$ is a subfield of $\overline{\mbb{Q}}_p$.
In this section, we often use the following technical constants:
\begin{align*}
& L_g(m):= \left[\log_p(1+p^{\frac{m}{2}})^{2g}\right],\\ 
& C(m,M,h):=v_p\left( \frac{m}{d_M} \right)  + h 
+ \frac{d_M}{2}\left(d_M+v_p(e_M)-\frac{1}{e_M} +v_p(2)(d_M-1)\right).
\end{align*}
Here, $m\ge 1$ and $h\ge 0$ are integers and $M$ is a $p$-adic field.
\begin{remark}
\label{Lg}
(1) We have $mg\le L_g(m)<g(m+1+v_p(2))$ for any prime $p$ and $m\ge 1$, 
and we also have $L_g(m)<g(m+1)$ if $(p,m)\not = (2,1), (2,2)$. 

\noindent
(2) Moreover, we have\footnote{The evaluation $8g$ here is "rough" but it is 
enough for our proofs.} 
$$
L_g(m)=mg\quad \mbox{for  $m\ge 8g$}.
$$ 
This can be checked as follows:  
It suffices to show $(1+p^{\frac{m}{2}})^{2g}<p^{mg+1}$ for $m\ge 8g$.
This inequality is equivalent to $(1+p^{-\frac{m}{2}})^{2g}<p$.
Thus it is  enough to show $(1+2^{-\frac{m_0}{2}})^{2g}<2$ where $m_0:=8g$.
By inequalities $2g<2^{2g}$ and 
$\left(\begin{smallmatrix}
  2g \\ r
\end{smallmatrix}\right)<2^{2g}$
for $0\le r\le 2g$,
we find
\if0
\begin{align*}
\left(1+2^{-\frac{m_0}{2}}\right)^{2g}
& 
= \left(1+\left(\frac{1}{2}\right)^{\frac{m_0}{2}}\right)^{2g}
=1+\sum^{2g}_{r=1} 
\left(\begin{smallmatrix}
  2g \\ r
\end{smallmatrix}\right)
\left(\frac{1}{2}\right)^{\frac{rm_0}{2}} \\
& 
< 1+ 2g\cdot 2^{2g} \left(\frac{1}{2}\right)^{\frac{m_0}{2}}
< 1+ \left(\frac{1}{2}\right)^{\frac{m_0}{2}-4g}=2
\end{align*}
\fi
$$
\left(1+2^{-\frac{m_0}{2}}\right)^{2g}
=1+\sum^{2g}_{r=1} 
\left(\begin{smallmatrix}
  2g \\ r
\end{smallmatrix}\right)
\left(\frac{1}{2}\right)^{\frac{rm_0}{2}} 
< 1+ 2g\cdot 2^{2g} \left(\frac{1}{2}\right)^{\frac{m_0}{2}}
< 1+ \left(\frac{1}{2}\right)^{\frac{m_0}{2}-4g}=2
$$
as desired.
\end{remark}


\subsection{Special cases}

We consider Theorem \ref{MT:CM:refined} under some additional hypothesis. 
In this section, we show 
\begin{proposition}
\label{MT:CM:good}
Let the situation be as in Theorem \ref{MT:CM:refined}
except assuming not {\rm (i)} but 
\begin{itemize}
\item[{\rm (i)'}] $v_p((q_k^{-1}\mrm{Nr}_{k/\mbb{Q}_p}(\pi))^{\mu}-1)
>L_g((2g)!\cdot \mu \cdot d_{Kk/k}f_k)$.
\end{itemize}
Moreover, we assume that 
$A$ has good reduction over $K$ 
and all the endomorphisms of $A$
are defined over $K$.
Put
\begin{align*}
C_g(K,k) & =v_p(d_{Kk})+ \frac{(2g)!}{2} \left((2g)!+v_p((2g)!) + v_p(2)((2g)!-1)\right), \\
\Delta_g(K,k)& =\mrm{Max}\left\{ C_g(K,k), L_g((2g)!\cdot \mu \cdot d_{Kk/k}f_k) \right\}.
\end{align*}
Then,
we have 
$$
A(Kk_{\pi})[p^{\infty}] \subset A[p^C]
$$
where 
$$
C:= 2g\Delta_g(K,k) + 12g^2-18g+10.
$$
\end{proposition}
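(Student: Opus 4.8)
The plan is to split $V_pA$ into crystalline characters, apply Proposition~\ref{vplem} together with Theorem~\ref{key:est2} to those with non-constant Hodge--Tate weights, and treat the remaining (essentially unramified) ones by the Weil bound and hypothesis (i)'. Since $A$ has complex multiplication by a $\mathbb{Q}$-algebra $E$ with $\dim_{\mathbb{Q}}E=2g$ and all endomorphisms of $A$ are defined over $K$, the group $G_K$ acts $E\otimes_{\mathbb{Q}}\mathbb{Q}_p$-linearly on $V_pA$, which is free of rank one over $E\otimes_{\mathbb{Q}}\mathbb{Q}_p=\prod_v E_v$. Fixing a finite Galois extension $M/\mathbb{Q}_p$ with $d_M\mid (2g)!$ into which every $E_v$ embeds and decomposing coefficients, one gets a $G_K$-equivariant isomorphism $V_pA\otimes_{\mathbb{Q}_p}M\simeq\bigoplus_{i=1}^{2g}M(\psi_i)$ with each $\psi_i\colon G_K\to\mathcal{O}_M^\times$ a crystalline character (crystalline because $A$ has good reduction) whose Hodge--Tate weights lie in $\{0,1\}$. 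Put $c_i:=\mathrm{Min}\{v_p(\psi_i(\sigma)-1)\mid \sigma\in G_{Kk_\pi}\}$. A point of $A(Kk_\pi)$ of exact order $p^a$ gives, via the lattice $T_pA\otimes_{\mathbb{Z}_p}\mathcal{O}_M\subset\bigoplus_i M(\psi_i)$, a primitive vector fixed modulo $p^a$ by $G_{Kk_\pi}$; hence, up to the index of this lattice inclusion and a rounding (both bounded in terms of $g$ alone), $a$ is at most $\mathrm{Max}_i c_i$, so it suffices to bound $\sum_i c_i$. Split the indices into $\mathcal{J}$, those $i$ with $\{h_{i,\sigma}\}_\sigma$ containing both $0$ and $1$, and $\mathcal{I}$, those $i$ with all $h_{i,\sigma}$ equal (so $M(\psi_i)$ is unramified, or a cyclotomic twist of an unramified character).

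For $i\in\mathcal{J}$, restrict $\psi_i$ to $G_{(KM)k}$; as $(KM)k\supset M$ and $M$ is Galois over $\mathbb{Q}_p$, Theorem~\ref{key:est2} applies with $n=\sharp\mathcal{J}$ and $h=0$ (its hypothesis (ii) holds since the weights of $M(\psi_i)$ differ by $\pm1$). Let $\pi$ be a uniformizer of $k$ and set $\omega_0:=\pi\hat\omega^{p^{s(\pi)}}$ with $\hat\omega$ from Theorem~\ref{key:est2}(1) and $s(\pi)$ from~(2). Then $\mathrm{Nr}_{(KM)k/k}(\hat\omega)=1$: indeed $\hat\omega\in\ker\mathrm{Nr}_{M/\mathbb{Q}_p}\subset M^\times$, and hypothesis (ii) of Theorem~\ref{MT:CM:refined} makes $M$ and $k$ linearly disjoint over $\mathbb{Q}_p$, so that $\mathrm{Nr}_{Mk/k}$ restricts to $\mathrm{Nr}_{M/\mathbb{Q}_p}$ on $M$. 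Since also $\mathrm{Nr}_{(KM)k/k}(\pi)=\pi^{d_{(KM)k/k}}\in\pi^{f_{(KM)k/k}\mathbb{Z}}$, the element $\omega_0$ lies in $\mathrm{Nr}_{(KM)k/k}^{-1}(\pi^{f_{(KM)k/k}\mathbb{Z}})$, and Proposition~\ref{vplem} (applied over $KM$) together with Theorem~\ref{key:est2}(2) gives
\begin{align*}
\sum_{i\in\mathcal{J}}c_i
&\le \mathrm{Min}\Bigl\{\, \sum_{i\in\mathcal{J}}v_p(\psi_i(\sigma)-1)\ \Big|\ \sigma\in G_{(KM)k_\pi}\Bigr\}
\ \le\ \sum_{i\in\mathcal{J}}v_p\bigl(\psi_{i,(KM)k}(\omega_0)-1\bigr) \\
&\le \sum_{i\in\mathcal{J}}\bigl(\sharp\mathcal{J}+\delta_{(i)}+C(d_{(KM)k},M,0)\bigr).
\end{align*}
Because $d_M,e_M$ divide $(2g)!$ and $v_p(d_{(KM)k}/d_M)\le v_p(d_{Kk})$, one checks $C(d_{(KM)k},M,0)\le C_g(K,k)$; with $\sharp\mathcal{J}\le 2g$ and $\sum_{i=1}^{2g}\delta_{(i)}=4g^2-8g+4$ this yields $\sum_{i\in\mathcal{J}}c_i\le 8g^2-8g+4+2g\,C_g(K,k)$.

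For $i\in\mathcal{I}$ the character $\psi_i$ factors through the Galois group of an unramified extension of $K$ up to a power of the cyclotomic character, so its contribution to the torsion over $Kk_\pi$ is, up to a bounded Tate twist, controlled by the $p$-primary part of $\bar A$ over the finite residue field of $Kk_\pi$; bounding this by the Weil estimate and using the relation, supplied by the main theorem of complex multiplication, between the relevant Frobenius eigenvalues and $q_k^{-1}\mathrm{Nr}_{k/\mathbb{Q}_p}(\pi)$ (as in \cite{Oz1}, cf.\ \cite{Im}), hypothesis (i)' yields $c_i\le L_g((2g)!\cdot\mu\cdot d_{Kk/k}f_k)$. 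Combining the two cases, $\sum_i c_i\le 8g^2-8g+4+2g\,\Delta_g(K,k)$; adding the $g$-bounded errors from the rounding step and from the lattice index, one obtains $A(Kk_\pi)[p^\infty]\subset A[p^C]$ with $C=2g\,\Delta_g(K,k)+12g^2-18g+10$, and then $\sharp A(Kk_\pi)[p^\infty]\le\sharp A[p^C]=p^{2gC}$. I expect the main obstacle to be the case $i\in\mathcal{I}$ — making the relation between the Frobenius on the reduction of $A$ and $q_k^{-1}\mathrm{Nr}_{k/\mathbb{Q}_p}(\pi)$ explicit enough to see that (i)' is exactly what keeps the unramified part of the torsion small over the infinite extension $k_\pi$ — together with the bookkeeping that converts the $(2g)!$-divisibilities and lattice indices into the stated constants $C_g(K,k)$ and $12g^2-18g+10$.
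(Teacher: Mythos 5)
Your overall strategy — decompose $V_pA\otimes M$ into crystalline characters, hand the ones with non-constant Hodge--Tate weights to Proposition~\ref{vplem} and Theorem~\ref{key:est2}, and control the remaining ones with the Weil bound plus hypothesis (i)' — is in the right spirit, and your treatment of the $\mathcal{J}$-part (the choice of $\omega_0=\pi\hat\omega^{p^{s(\pi)}}$, the linear-disjointness of $M$ and $k$ so that $\hat\omega\in\ker\mathrm{Nr}_{K_Mk/k}$, and the use of $C(d_{K_Mk},M,0)\le C_g(K,k)$) matches the paper's. But there is a genuine gap at the very first reduction, and it is not a cosmetic one.

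You want to pass from a $p$-power torsion point of $A(Kk_\pi)$ to the quantities $c_i=\mathrm{Min}\{v_p(\psi_i(\sigma)-1)\}$ by ``fixing a primitive vector in the lattice $T_pA\otimes_{\mathbb{Z}_p}\mathcal{O}_M\subset\bigoplus_i M(\psi_i)$, up to the index of this lattice inclusion and a rounding, both bounded in terms of $g$ alone.'' This step is not justified, and I do not see how to justify it. The $\mathcal{O}_M$-lattice $T_pA\otimes\mathcal{O}_M$ need not be $G_K$-equivariantly compatible with a direct-sum lattice $\bigoplus\mathcal{O}_M(\psi_i)$: the decomposition $V_pA\otimes M\simeq\bigoplus M(\psi_i)$ is an isomorphism of representations after passing to (semi-)simple pieces, and even when it holds on the nose, the index of one $\mathcal{O}_M$-lattice inside another of the same $M$-span depends on the abelian variety (and on $M$), not just on $g$. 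There is no \emph{a priori} uniform bound for it, so the claimed ``$a\le\mathrm{Max}_i c_i+\text{(bounded error)}$'' has no proof behind it, and the final constants are being conjured rather than derived (you note yourself that you must ``add $g$-bounded errors'' of size $4g^2-10g+6$ to hit $12g^2-18g+10$; nothing in your argument produces that number).

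The paper avoids this entirely with a determinant argument that is purely over $\mathbb{Z}_p$ and uses no lattice comparison: if $\bar P\in(T/p^nT)^{G_{Kk_\pi}}$ then $(\rho(\sigma)-E)\iota(P)\in p^n\mathbb{Z}_p^{\oplus 2g}$ for all $\sigma\in G_{Kk_\pi}$, hence $\det(\rho(\sigma)-E)\cdot P\in p^nT$, and setting $c=\mathrm{Min}_\sigma v_p(\det(\rho(\sigma)-E))$ gives $A(Kk_\pi)[p^\infty]\subset A[p^c]$ cleanly. Since $\det(\rho(\sigma)-E)=\prod_i\det(\rho_i(\sigma)-E)$ and $v_p(\det\rho_i(\sigma)-E)=d_{F_i}v_p(\psi_i(\sigma)-1)$ by $\mathbb{Q}_p$-conjugacy, the quantity to bound is $c=\mathrm{Min}_\sigma\sum_i d_{F_i}v_p(\psi_i(\sigma)-1)$, which is a single min over $\sigma$ of the full sum, \emph{not} $\sum_i c_i$. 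This difference matters: the per-character minimizers live at different $\sigma$, which is exactly why Theorem~\ref{key:est2} is formulated to produce one $\hat\omega$ good for all the characters simultaneously. Note also that the paper raises the characters to the $\mu$-th power throughout the chain of inequalities before splitting into cases — both so that Lemma~\ref{key:Wei} can use the Frobenius over the degree-$\mu e_{L/k}f_L$ residue field and hypothesis (i)', and to keep the subsequent bookkeeping coherent; your proposal applies Theorem~\ref{key:est2} directly to $\psi_i$ rather than $\psi_i^\mu$, which would require its own check. In short: replace the primitive-vector/lattice-index step by the determinant argument, carry the $\mu$-th power through, keep the multiplicity $d_{F_i}$ bookkeeping, and the constant $12g^2-18g+10$ then comes out of the arithmetic with $\delta_{(i)}$ rather than having to be absorbed into an unquantified error.
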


\begin{proof}
Put $T=T_p(A)$ and $V=V_p(A)$ to simplify notation.
Let $\rho\colon G_K\to GL_{\mbb{Z}_p}(T)$
be  the continuous homomorphism obtained by the $G_K$-action on $T$.
Fix an  isomorphism
$\iota \colon T\overset{\sim}{\rightarrow} \mbb{Z}_p^{\oplus 2g}$
of $\mbb{Z}_p$-modules. 
We have an isomorphism
$\hat{\iota}\colon GL_{\mbb{Z}_p}(T)\simeq GL_{2g}(\mbb{Z}_p)$ 
relative to $\iota$. 
We abuse notation by writing $\rho$
for the composite map 
$G_K\to GL_{\mbb{Z}_p}(T)
\simeq GL_{2g}(\mbb{Z}_p)$
of $\rho$ and $\hat{\iota}$.
Now let $P\in T$ and denote by $\bar{P}$ the image of $P$ in $T/p^nT$.
By definition, we have 
 $\iota(\sigma P)=\rho(\sigma)\iota(P)$
for $\sigma\in G_K$. 
Suppose that $\bar{P}\in(T/p^nT)^{G_{Kk_{\pi}}}$.
This implies  $\sigma P-P \in p^nT$ for any $\sigma\in {G_{Kk_{\pi}}}$. 
This is equivalent to say that 
 $(\rho(\sigma)-E)\iota(P) \in p^n\mbb{Z}_p^{\oplus 2g}$, and this in particular 
implies 
$\det (\rho(\sigma)-E)\iota(P) \in p^n\mbb{Z}_p^{\oplus 2g}$
for any $\sigma\in {G_{G_{Kk_{\pi}}}}$.
Hence we find 
$\det (\rho(\sigma)-E)P \in p^nT$
for any $\sigma\in {G_{Kk_{\pi}}}$.
Put 
$$
c=\mrm{Min}\{v_p(\det (\rho(\sigma)-E)))\mid \sigma\in {G_{Kk_{\pi}}}\}.
$$
Then we see $P\in p^{n-c}T$ (if $c$ is finite and $n>c$)  and  this shows 
$(T/p^nT)^{G_{Kk_{\pi}}}\subset p^{n-c}T/p^nT$.
This implies an inequality
\begin{equation}
\label{order:det}
A(Kk_{\pi})[p^{\infty}] \subset A[p^c]
\end{equation}
if $c$ is finite.

On the other hand, let us denote by $F$ the field of complex multiplication of $A$. 
We know that $V$ is a free $F\otimes_{\mbb{Q}} \mbb{Q}_p$-module of rank one
and the $G_K$-action on $V$ commutes with 
$F\otimes_{\mbb{Q}} \mbb{Q}_p$-action.
Let $\prod^{n}_{i=1} F_i$ denote the decomposition 
of $F\otimes_{\mbb{Q}} \mbb{Q}_p$ into a finite product of $p$-adic fields. 
This induces a decomposition $V\simeq \oplus^{n}_{i=1} V_i$
of $\mbb{Q}_p[G_K]$-modules. 
Each $V_i$ is equipped with a structure of one dimensional $F_i$-modules
and the $G_K$-action on $V_i$ commutes with $F_i$-action.
Let $\rho_i\colon G_K\to GL_{\mbb{Q}_p}(V_i)$ be the homomorphism
obtained by the $G_K$-action on $V_i$.
Since $\rho_i$ is abelian, 
it follows from the Shur's lemma that 
we have $(V_i\otimes_{\mbb{Q}_p} \overline{\mbb{Q}}_p)^{\mrm{ss}}\simeq 
\oplus^{d_{F_i}}_{j=1} \overline{\mbb{Q}}_p(\psi_{i,j})$
for some continuous characters $\psi_{i,j}\colon G_K\to \overline{\mbb{Q}}_p^{\times}$. 
Here, the subscript "ss" stands for the semi-simplification. 
As is well-known, 
$\psi_{i,j}$ satisfies the following properties 
(since the $G_K$-action on  $V_i$ is given by 
a character $G_K\to F_i^{\times}$):
\begin{itemize}
\item[(a)] $\psi_{i,1},\dots ,\psi_{i,d_{F_i}}$ are $\mbb{Q}_p$-conjugate with each other, that is,
$\psi_{i,k}=\tau_{k\ell}\circ \psi_{i,\ell}$ for some  $\tau_{k\ell}\in G_{\mbb{Q}_p}$, and 
\item[(b)] $\psi_{i,1},\dots ,\psi_{i,d_{F_i}}$ have values in a 
$p$-adic field $M_i$ (in the fixed algebraic closure $\overline{\mbb{Q}}_p$ of $\mbb{Q}_p$)
 which is  
$\mbb{Q}_p$-isomorphic\footnote{Note that $K$ lives in our fixed algebraic closure 
$\overline{\mbb{Q}}_p$ of $\mbb{Q}_p$ but 
 $F_i$ does not 
lives in $\overline{\mbb{Q}}_p$.} 
to the Galois closure of $F_i/\mbb{Q}_p$
(in an algebraic closure of $F_i$).
We remark that $d_{M_i}$ divides $d_{F_i}!$.
\end{itemize}
In particular, we have 
$$
v_p(\det \rho_i(\sigma)-E)=d_{F_i}v_p(\psi_i(\sigma)-1),
$$
where $\psi_i:=\psi_{i,1}$.
\if0
We recall that 
$p$-adic fields $K$ and $k$ are subfields of  a fixed algebraic closure 
$\overline{\mbb{Q}}_p$ of $\mbb{Q}_p$
but   each  $F_i$ does not lives in this $\overline{\mbb{Q}}_p$.
We fix an algebraic closure $\overline{F}_i$ of $F_i$. 
Since $\overline{F}_i$ is also an algebraic closure of $\mbb{Q}_p$, 
there exists an isomorphism 
$\iota_i\colon \overline{F}_i
\overset{\sim}{\rightarrow}  \overline{\mbb{Q}}_p$ 
of $\mbb{Q}_p$-algebras.
Now, for any $\sigma\in G_K$,
 we denote by  $f_{A,\sigma}(X)\in \mbb{Q}_p[X]$ the characteristic polynomial 
$\det (X-\rho_i(\sigma)\mid V_i)$ 
of the $\sigma$-action on the $\mbb{Q}_p$-vector space $V_i$.
The polynomial 
$f_{A,\sigma}(X)$  decomposes as 
$$
f_{A,\sigma}(X) =\prod_{\tau} (X-\omega_{\tau}),\ \omega_{\tau}:=\tau\phi_i(\sigma)
$$
in $\overline{F}_i$
where $\tau$ runs through all $\mbb{Q}_p$-algebra embeddings 
$F_i\hookrightarrow \overline{F}_i$ (cf.\ \cite[Chapter I, \S 2, Proposition 2.6]{Ne}).  
By taking $\iota_i$ to this equality,
we have a decomposition 
$$
f_{A,\sigma}(X) =\prod^{d_{F_i}}_{i=1} (X-\omega_i)
$$
in $\overline{\mbb{Q}}_p$. 
Furthermore $\omega_1,\dots \omega_{d_i}$ 
are $\mbb{Q}_p$-conjugate with each others. 
If we denote by $M_i$ the image of the Galois closure of $F_i/\mbb{Q}_p$ via $\iota_i$,
then $M_i$ is a Galois extension of $\mbb{Q}_p$ in $\overline{\mbb{Q}}_p$, $\omega_j\in M_i$ for any $j$,
and $d_{M_i}$ divides $d_{F_i}!$. 
On the other hand, since $\rho_i$ is abelian, 
it follows from the Shur's lemma that 
we have $(V_i\otimes_{\mbb{Q}_p} \overline{\mbb{Q}}_p)^{\mrm{ss}}\simeq 
\oplus^{d_i}_{j=1} \overline{\mbb{Q}}_p(\psi_{i,j})$
for some continuous characters $\psi_{i,j}\colon G_K\to \overline{\mbb{Q}}_p^{\times}$. 
Here, the subscript "ss" stands for the semi-simplification. 
Since $\psi_{i,j}(\sigma)$ is a root of $f_{A,\sigma}(T)$ for any $j$ and any $\sigma\in G_K$,  
we know that 
$\psi_{i,1}(\sigma),\dots ,\psi_{i,d_i}(\sigma)$
are $\mbb{Q}_p$-conjugate with each others and hence we see
$$
v_p(\det \rho_i(\sigma)-E)=d_{F_i}v_p(\psi_i(\sigma)-1),
$$
where $\psi_i:=\psi_{i,1}$.
\fi
Let $M$ be the composite field of $M_1,\dots ,M_{n}$,
and we regard $\psi_1,\dots, \psi_{n}$
as characters of $G_K$ with values in $M^{\times}$;  $\psi_i\colon G_K\to M^{\times}$. 
The field $M$ is a Galois extension of $\mbb{Q}_p$ in $\overline{\mbb{Q}}_p$ and 
$d_M$ divides $d_{F_1}!d_{F_2}!\cdots d_{F_n}!$.
Since $\sum^n_{i=1}d_{F_i}=2g$, 
we find 
\begin{equation}
\label{Msize}
d_M\mid (2g)!.
\end{equation} 
(Here, we recall that the product of consecutive $n$ natural numbers is divided by $n!$
for any natural number $n$.)
In particular, we have $M\cap k=\mbb{Q}_p$
since $d_k$ is prime to $(2g)!$, and then we obtain
$$
\ker \mrm{Nr}_{M/\mbb{Q}_p}\subset \ker \mrm{Nr}_{Mk/k}\subset \ker \mrm{Nr}_{K_Mk/k}.
$$
Here, $K_M$ is the composite $KM$ of $K$ and $M$.
It follows from Proposition \ref{vplem} that   we obtain 
\begin{align}
\label{later}
c 
& \le  \mrm{Min}\left\{v_p(\det (\rho(\sigma)-E)))\mid \sigma\in {G_{K_Mk_{\pi}}}\right\} 
=  \mrm{Min}\left\{ \sum^n_{i=1} d_{F_i}v_p(\psi_i(\sigma)-1) \mid \sigma\in G_{K_Mk_{\pi}} \right\}  
\notag \\
& \le
\mrm{Min}\left\{ \sum^n_{i=1} d_{F_i}v_p(\psi_{i,K_Mk}(\pi \omega)^{-1}-1) 
\mid \omega\in \ker \mrm{Nr}_{K_Mk/k} \right\} \notag \\
& \le
\mrm{Min}\left\{ \sum^n_{i=1} d_{F_i}v_p(\psi_{i,K_Mk}(\pi \omega)^{-1}-1) 
\mid \omega\in \ker \mrm{Nr}_{M/\mbb{Q}_p} \right\} \notag \\
&  \le
\mrm{Min}\left\{ \sum^n_{i=1} d_{F_i}v_p(\psi^{\mu}_{i,K_Mk}(\pi \omega)^{-1}-1) 
\mid \omega\in \ker \mrm{Nr}_{M/\mbb{Q}_p} \right\}. 
\end{align}
Here, $\mu$ is the integer appeared in the statement 
of Theorem \ref{MT:CM:refined}.
Note that $\psi_i$ is a crystalline character since $A$ has good reduction over $K$.
By rearranging the numbering of subscripts,
we may suppose the following situation for some $0\le r\le n$.
\begin{itemize}
\item[(I)] For  $1\le i\le r$, the set of the Hodge-Tate weights of 
$M(\psi_i)$ is  $\{0,1\}$. 
\item[(II)] For  $r< i\le n$, the set of  the Hodge-Tate weights of $M(\psi_i)$ 
is either $\{1\}$ or $\{0\}$.
\end{itemize}

\begin{lemma}
\label{key:Wei}
For  $r< i\le n$ and any $\omega\in \ker \mrm{Nr}_{M/\mbb{Q}_p}$,
we have 
$$
v_p(\psi^{\mu}_{i,K_Mk}(\pi\omega)^{-1}-1) \le L_g((2g)!\cdot d_{Kk/k}f_k\cdot \mu).
$$
\end{lemma}

\begin{proof}
In this proof we  set $L:=K_Mk$.
We know that  the morphism
$\psi_{i,\mrm{alg}}\colon \underline{L}^{\times}\to \underline{M}^{\times}$
corresponding to $\psi_i|_{G_{L}}$ is trivial or 
$\mrm{Nr}_{L/\mbb{Q}_p}^{-1}$ on $\mbb{Q}_p$-points. 
This in particular gives  $\psi_{i,L}(\omega)=1$.
Since $\pi_L^{e_{L/k}}\pi^{-1}$ is a $p$-adic unit
for any uniformizer $\pi_L$ of $L$, 
we  find
\begin{align*}
\psi_{i,L}(\pi\omega)^{-1}
&=\psi_{i,L}(\pi)^{-1}
=
\psi_{i,L}(\pi_L^{-e_{L/k}}\cdot \pi_L^{e_{L/k}}\pi^{-1}) 
\notag \\
& = 
\alpha_i^{-e_{L/k}}\cdot \psi_{i,\mrm{alg}}(\pi)^{-1}
\end{align*}
where $\alpha_i:=\psi_{i,L}(\pi_L)\psi_{i,\mrm{alg}}(\pi_L)^{-1}$. 
Denote by $L'$ the unramified extension of $L$ of degree $\mu e_{L/k}$.

(I) Suppose that the set of  the Hodge-Tate weights of $M(\psi_i)$ is $\{0\}$.
In this case $\psi_{i,\mrm{alg}}$ is  trivial and thus we have 
$\psi^{\mu}_{i,L}(\pi \omega)^{-1}=\alpha_i^{-\mu e_{L/k}}$.
It follows from Lemma 9 of \cite{Oz1} that 
$\psi^{\mu}_{i,L}(\pi \omega)^{-1}$ 
is a unit root of 
the characteristic polynomial  $f(T)$ of 
the geometric Frobenius endomorphism of  
$\overline{A}_{/\mbb{F}_{L'}}$.
Since $f(1)=\sharp \overline{A}(\mbb{F}_{q_{L'}})$,
we see 
$v_p(\psi^{\mu}_{i,L}(\pi \omega)^{-1}-1)
\le v_p(\sharp \overline{A}(\mbb{F}_{q_{L'}}))
\le \left[\log_p\sharp \overline{A}(\mbb{F}_{q_{L'}}) \right]$. 
It follows from the Weil bound that 
$
v_p(\psi^{\mu}_{i,L}(\pi \omega)^{-1}-1)\le 
L_g(f_{L'}). 
$
Since 
we have 
$
f_{L'}  = \mu e_{L/k}f_L
 = d_{L/Kk} \cdot \mu  \cdot d_{Kk/k} f_{k} 
\le  (2g)! \cdot \mu \cdot d_{Kk/k}f_k.
$
we obtain the desired inequality.

(II) Suppose that the set of  the Hodge-Tate weights of $M(\psi_i)$ is $\{1\}$.
In this case $\psi_{i,\mrm{alg}}$ is  $\mrm{Nr}_{L/\mbb{Q}_p}^{-1}$ on $\mbb{Q}_p$-points.
If  we set $\beta:=q_k^{-1}\mrm{Nr}_{k/\mbb{Q}_p}(\pi)$,  
we find  
\begin{align*}
\psi^{\mu}_{i,L}(\pi \omega)^{-1}-1
& = (\alpha_i^{-1}\mrm{Nr}_{k/\mbb{Q}_p}(\pi)^{f_{L/k}})^{\mu e_{L/k}} -1 \notag \\
& = ((\alpha_i^{-1}q_L)^{\mu e_{L/k}}-1)
\beta^{\mu d_{L/k}} + (\beta^{\mu d_{L/k}}-1).
\end{align*} 
It again follows from Lemma 9 of \cite{Oz1}   
that 
$(\alpha_i^{-1}q_L)^{\mu e_{L/k}}$ 
is a unit root of 
the characteristic polynomial  $f^{\vee}(T)$ of 
the geometric Frobenius endomorphism of  
$\overline{A^{\vee}}_{/\mbb{F}_{L'}}$.
Since $f^{\vee}(1)=\sharp \overline{A^{\vee}}(\mbb{F}_{q_{L'}})$,
the same argument as in (I) shows that  
$v_p((\alpha_i^{-1}q_L)^{\mu e_{L/k}}-1)
\le L_g(f_{L'})\le L_g((2g)!\cdot \mu \cdot d_{Kk/k}f_k)$.  
In particular, we have  $v_p(\beta^{\mu d_{L/k}}-1)>v_p((\alpha_i^{-1}q_L)^{\mu e_{L/k}}-1)$
by the assumption  (i)'.
Since $\beta$ is a $p$-adic unit, 
we obtain 
$v_p(\psi^{\mu}_{i,L}(\pi\omega)^{-1}-1) 
= v_p((\alpha_i^{-1}q_L)^{\mu e_{L/k}}-1) 
\le L_g((2g)!\cdot \mu \cdot d_{Kk/k}f_k)$
as desired. 
\end{proof}
By \eqref{later} and the lemma, in the case where $r=0$,
we have 
\begin{equation}
\label{r=0}
c\le \sum^n_{i=1} d_{F_i}L_g((2g)!\cdot \mu \cdot d_{Kk/k}f_k)
=2gL_g((2g)!\cdot \mu \cdot d_{Kk/k}f_k). 
\end{equation}
In the rest of the proof, we assume that $r>0$.
By \eqref{later}  and the lemma again, we have 
\begin{align*}
c 
&  \le
\mrm{Min}\left\{ \sum^r_{i=1} d_{F_i}v_p(\psi^{\mu}_{i,K_Mk}(\pi \omega)^{-1}-1) 
\mid \omega\in \ker \mrm{Nr}_{M/\mbb{Q}_p} \right\} \\
& \quad +L_g((2g)!\cdot \mu \cdot d_{Kk/k}f_k)\sum^n_{i=r+1} d_{F_i}. 
\end{align*}
Here we remark that    $v_p(\mu)=0$ and
the Hodge-Tate weights of $\psi_i^{\mu}$ for each $1\le i\le r$ consist of 
$0$ and $\mu$. 
Hence, applying Theorem \ref{key:est2} to the set of characters 
$\psi_1^{\mu},\dots , \psi_r^{\mu}\colon G_{K_Mk}\to M^{\times}$, an element $x=\pi$ and $h=0$, 
there exists an element $\hat{\omega}\in \ker \mrm{Nr}_{M/\mbb{Q}_p}$ 
and an integer $0\le s=s(\pi)\le r$ as in the theorem. Then we obtain
\begin{align*}
c 
& \le \sum^r_{i=1} d_{F_i}v_p(\psi^{\mu}_{i,K_Mk}(\pi \hat{\omega}^{p^s})^{-1}-1) 
+L_g((2g)!\cdot \mu \cdot d_{Kk/k}f_k)\sum^n_{i=r+1} d_{F_i} \\
& \le \sum^r_{i=1} d_{F_i}(r+\delta_{(i)}+C(d_{K_Mk},M,0)) 
+L_g((2g)!\cdot \mu \cdot d_{Kk/k}f_k)\sum^n_{i=r+1} d_{F_i}\\
& \le 2g\Delta_0 +\sum^r_{i=1} d_{F_i}(r+\delta_{(i)})
\end{align*}
where $\Delta_0:=\mrm{Max}\left\{ C(d_{K_Mk},M,0), 
L_g((2g)!\cdot \mu \cdot d_{Kk/k}f_k) \right\}$.
Since $d_M$ divides $(2g)!$, 
we also have 
$$
C(d_{K_Mk},M,0)
<v_p(d_{Kk})+ \frac{(2g)!}{2} \left((2g)!+v_p((2g)!) + v_p(2)((2g)!-1)\right).
$$
Thus, for the constant
$\Delta_g(K,k)$
defined in the statement of the proposition,
we obtain  $\Delta_0\le \Delta_g(K,k)$ and 
$
c \le 2g\Delta_g(K,k) +\sum^r_{i=1} d_{F_i}(r+\delta_{(i)}).
$
\begin{itemize}
\item If $r\le 2$, we have 
$\sum^r_{i=1} d_{F_i}(r+\delta_{(i)})=
\sum^r_{i=1} d_{F_i}r\le r\cdot 2g\le 4g$.
\item If $r>2$, we have 
$\sum^r_{i=1} d_{F_i}(r+\delta_{(i)})
=r\sum^r_{i=1} d_{F_i} 
+\sum^r_{i=3} d_{F_i}\delta_{(i)}
\le n\sum^n_{i=1} d_{F_i} + \sum^n_{i=3} d_{F_i}(2n-5)
\le n\cdot 2g +(2n-5) (\sum^n_{i=1} d_{F_i}-2)
\le 2g\cdot 2g +(4g-5) \cdot (2g-2)
=12g^2-18g+10$.
\end{itemize}
Therefore, for any $r>0$,  we find 
$$
c \le 2g\Delta_g(K,k) +12g^2-18g+10.
$$
Note that this inequality holds also for the case $r=0$ by \eqref{r=0}.
Now the proposition follows from \eqref{order:det}.
\end{proof}


\subsection{General cases}
We show Theorems \ref{MT:CM:refined}
and \ref{MT:CM:KT}.
For this, we need the following observations given by Serre-Tate \cite{ST} and Silverberg \cite{Si1}.
\begin{theorem}
\label{ST-Sil}
Let $A$ be a $g$-dimensional abelian variety over $K$.

\noindent
{\rm (1)}  Put $m=3$ or $m=4$ if $p\not=3$ or $p=3$, respectively.
Then  $A$ has semi-stable reduction over $K(A[m])$ 
and all the endomorphisms of $A$
are defined over $K(A[m])$.

\noindent
{\rm (2)}  Let $L$ be the intersection 
of the fields $K(A[N])$ for all integers $N>2$.
 Then, all the endomorphisms of $A$
are defined over $L$ and $[L:K]$ divides $H(g)$.

\noindent
{\rm (3)} Assume that $A$ has potential good reduction. 
Let $\rho_{A,\ell}\colon G_K\to GL_{\mbb{Z}_p}(T_{\ell}(A))$
be the continuous homomorphism defined by the $G_K$-action on 
the Tate module $T_{\ell}(A)$ 
for any prime $\ell$.
\begin{itemize}
\item[{\rm (3-1)}] For any  prime $\ell$ not equal to $p$, 
let $H_{\ell}$ be  the kernel of the restriction of 
$\rho_{A,\ell}$ to $I_K$.
Then $H_{\ell}$ is an open subgroup of $I_K$, which is 
independent of the choice of $\ell$. 
Moreover, if we set $c:=[I_K:H_{\ell}]$, then 
there exists a  finite totally ramified extension $L/K$ of degree  $c$
such that 
$A$ has good reduction over $L$. 
\item[{\rm (3-2)}] If  $A$ has complex multiplication and 
all the endomorphisms of $A$
are defined over $K$, then the constant 
$c$ above satisfies $c\le \Phi(g)$.
\end{itemize}

\noindent
{\rm (4)} Assume that $A$ has complex multiplication {\rm (}over $\overline{K}${\rm )}.
Then, there exists a finite extension $L/K$ of degree at most $\Phi(g)H(g)$ 
such that 
$A$ has good reduction over $L$ 
and all the endomorphisms of $A$
are defined over $L$.
\end{theorem}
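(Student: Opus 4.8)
The plan is to recognize these four assertions as a repackaging of results of Serre--Tate \cite{ST} and Silverberg \cite{Si1}, with one genuine combination appearing only in part (4); so I would quote each piece and then glue.

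First I would dispose of (1) via the Serre--Tate criterion: for $m\ge 3$ a full level-$m$ structure removes the unipotent ambiguity in the reduction, so $I_{K(A[m])}$ acts unipotently on $T_\ell(A)$ for $\ell\nmid m$ and $A$ acquires semistable reduction over $K(A[m])$; moreover the Galois action on $\mrm{End}(A_{\overline{K}})$ is trivial modulo $m$, hence trivial for $m\ge 3$, so all endomorphisms are defined over $K(A[m])$. For (2), I would use that the Weil pairing embeds $\mrm{Gal}(K(A[N])/K)$ into $\mrm{GSp}_{2g}(\mbb{Z}/N\mbb{Z})$, so $[K(A[N]):K]$ divides $\sharp\,\mrm{GSp}_{2g}(\mbb{Z}/N\mbb{Z})$; taking $L=\bigcap_{N>2}K(A[N])$ then gives $[L:K]\mid \gcd_{N\ge 3}\sharp\,\mrm{GSp}_{2g}(\mbb{Z}/N\mbb{Z})=H(g)$, while part (1) shows all endomorphisms are defined over each $K(A[m])$ with $m\ge 3$, hence over $L$. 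Both statements are exactly \cite{Si1}.

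For (3), I would start from the fact that potential good reduction together with the N\'eron--Ogg--Shafarevich criterion forces $I_K$ to act on $T_\ell(A)$ through a finite quotient for $\ell\ne p$; the independence of the kernel $H_\ell$, and hence of the index $c=[I_K:H_\ell]$, from the choice of $\ell$ is the theorem of Serre--Tate \cite{ST}. Working over $K^{\mrm{ur}}$, the fixed field of $H_\ell$ descends to a totally ramified extension $L/K$ of degree $c$ over which inertia acts trivially, i.e. $A$ has good reduction over $L$ by N\'eron--Ogg--Shafarevich again. The bound $c\le \Phi(g)$ in (3-2), under the CM hypothesis with all endomorphisms defined over $K$, is Silverberg's count: the inertia image is, up to a cyclic prime-to-$p$ part of bounded size, generated by a root of unity lying in an extension of $\mbb{Q}_p$ of degree $\le 2g$, so its order $m$ satisfies $\vphi(m)\mid 2g$, which is precisely the defining property of $\Phi(g)$. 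This too is \cite{Si1}.

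Finally, (4) is the synthesis: a CM abelian variety has potential good reduction (Serre--Tate), so I would first pass to the field $L_1$ of part (2), with $[L_1:K]\mid H(g)$, over which all endomorphisms of $A$ are rational, and then apply (3-1)--(3-2) over $L_1$ to obtain a totally ramified $L/L_1$ with $[L:L_1]=c\le \Phi(g)$ and $A$ of good reduction over $L$; since $L\supseteq L_1$, the endomorphisms stay defined over $L$ and $[L:K]=c\,[L_1:K]\le \Phi(g)H(g)$. The one step that is not pure bookkeeping is the inequality $c\le \Phi(g)$ of (3-2), where one must bound the order of the tame part of the inertia image in the CM case by $\vphi$; I expect that to be the main obstacle, and I would simply invoke Silverberg \cite{Si1} for it rather than reprove it.
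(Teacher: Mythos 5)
Your proposal follows essentially the same route as the paper: (1) and (2) are quoted from Silverberg's Theorem 4.1 (together with Raynaud's criterion for semistability), (3-1) is the Serre--Tate theorem plus a descent from $K^{\mrm{ur}}$, and (4) is the same composition of (2) with (3). Two remarks on the places you gloss. For (3-1), the paper actually writes out the descent (decomposing $G_K$ as a semidirect product of $I_K$ with the closure $\Gamma$ of a Frobenius lift and setting $L=K_{\Gamma}\cap M$, where $M/K^{\mrm{ur}}$ corresponds to $H_{\ell}$); this is exactly the content behind your phrase ``descends to a totally ramified extension,'' so nothing is missing, only unwritten. For (3-2), however, your heuristic is subtly off: the paper's argument (via Serre--Tate, \S4, Theorem 6) places the image of inertia inside the group $\mu(F)$ of roots of unity of the CM field $F$, which is a \emph{number field} of degree $2g$ over $\mbb{Q}$, so $\mbb{Q}(\mu_m)\subset F$ forces $\varphi(m)\mid 2g$ and hence $c\le m\le \Phi(g)$. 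Your version, with the root of unity ``lying in an extension of $\mbb{Q}_p$ of degree $\le 2g$,'' would not deliver this: for a $p$-adic field $M$ containing $\mu_m$ one only controls $[\mbb{Q}_p(\mu_m):\mbb{Q}_p]$, which for the prime-to-$p$ part of $m$ is the order of $p$ modulo that part and can be far smaller than $\varphi(m)$, so no divisibility $\varphi(m)\mid 2g$ follows. Since you explicitly defer this step to the literature the proposal is not broken, but the correct source of the bound is the global CM field, not a local one.
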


\begin{proof}
(1) follows from  \cite[Theorem 4.1]{Si1} 
and the Raynaud's criterion of semi-stable reduction \cite[Proposition 4.7]{Gr}.
(2) is \cite[Theorem 4.1]{Si1}, and (4) is an immediate 
consequence  of (2) and (3) since $A$ must have
potential good reduction 
under the condition that $A$ has complex multiplication.
The first statement related to  $H_{\ell}$ in (3-1) is \cite[\S 2, Theorem 2, p.496]{ST}.
The rest assertions of (3) are also essentially consequences of 
results given in 
\S 2 and \S 4  of  \cite{ST} 
but it is not directly mentioned in {\it loc., cit}.  
Thus we give a proof here,  just in case.  
The group $H$ is a closed normal subgroup of $G_K$, which is 
also open in $I_K$. 
Let $\Gamma$ be the closure of the subgroup of $G_K$  
generated by any choice of a lift of the $q_K$-th Frobenius element
in $G_{\mbb{F}_{q_K}}$.
The projection $G_K\to G_{\mbb{F}_{q_K}}$ gives an isomorphism of $\Gamma$
onto $G_{\mbb{F}_{q_K}}$; in particular, 
$G_K$ is the semi-direct product of $\Gamma$ and $I_K$.
Let $K_{\Gamma}/K$ be the field extension (of infinite degree) corresponding to 
$\Gamma\subset G_K$, and let
$M/K^{\mrm{ur}}$ be the finite extension corresponding to 
$H:=H_{\ell}\subset I_K$. 
Note that $A$ has good reduction over $M$.
Now we set $L:=K_{\Gamma} \cap M$.
Then $L/K$ is  totally ramified since so is $K_{\Gamma}/K$.
Furthermore, it is immediate to check $H\Gamma\cap I_K=H$;
this shows $LK^{\mrm{ur}}=M$.
Hence we obtain that $A$ has good reduction over $L$
and $[L:K]=[M:K^{\mrm{ur}}]=c$.
This shows (3-1).
Next we show (3-2).  
Let $F$ be the number field of degree $2g$ of complex multiplication of $A$.
Then $V_{\ell}(A)$ has a structure of free $F\otimes_{\mbb{Q}} \mbb{Q}_{\ell}$
module of rank one and the $G_K$-action on $V_{\ell}(A)$ commutes with 
$F\otimes_{\mbb{Q}} \mbb{Q}_{\ell}$.
Thus we may consider $\rho_{A,\ell}$ as a character 
$G_K\to (F\otimes_{\mbb{Q}} \mbb{Q}_{\ell})^{\times}$.
Moreover, the image of this character restricted to $I_K$ has values in 
the group $\mu(F)$ of roots of unity contained  in $F$
by \cite[\S 4, Theorem 6, p.503]{ST}. 
Thus we obtain the fact that  $c$ divides the order $m$ of $\mu(F)$. 
On the other hand,  since $\mu_m$ is a subset of $F$,
we have $\varphi(m)\mid 2g$.
Therefore, we obtain $c\le m\le \Phi(g)$ as desired.
\end{proof}

Now we are ready to show our main theorems. 
First we show Theorem \ref{MT:CM:refined}.
\begin{proof}[Proof of Theorem \ref{MT:CM:refined}]
Let $A$ be as in the theorem. 
Since $A$ has complex multiplication, 
it follows from Theorem \ref{ST-Sil} (4) 
that there exists a finite extension $L/K$ 
such that 
$d_{L/K}\le \Phi(g)H(g)$, 
$A$ has good reduction over $L$ 
and all the endomorphisms of $A$
are defined over $L$.
In addition, we have 
$v_p((q_k^{-1}\mrm{Nr}_{k/\mbb{Q}_p}(\pi))^{\mu}-1)
>g\cdot (2g)!\cdot \Phi(g)H(g) \cdot \mu \cdot d_{Kk/k}f_k
=L_g((2g)!\cdot \Phi(g)H(g) \cdot \mu \cdot d_{Kk/k}f_k)
\ge L_g((2g)!\cdot   \mu \cdot d_{Lk/k}f_k)$
by the assumption (i)  and Remark \ref{Lg} (2).
Thus we can apply  Proposition \ref{MT:CM:good} to $A/L$; 
we have 
$$
A(Lk_{\pi})[p^{\infty}]) \subset A[p^{C'}]
$$
where $C'=2g\Delta_g(L,k) + 12g^2-18g+10$.
Here, 
\begin{align*}
C_g(L,k) & =v_p(d_{Lk})+ \frac{(2g)!}{2} \left((2g)!+v_p((2g)!) + v_p(2)((2g)!-1)\right), \\
\Delta_g(L,k)& =\mrm{Max}\left\{ C_g(L,k), L_g((2g)!\cdot   \mu \cdot d_{Lk/k}f_k) \right\}.
\end{align*}
Note that we have 
$v_p(d_{Lk})< d_{Lk}\le \Phi(g)H(g)\cdot d_{Kk}$ 
and 
$L_g((2g)!\cdot   \mu \cdot d_{Lk/k}f_k)
\le  g\cdot (2g)!\cdot \Phi(g)H(g)\cdot \mu \cdot d_{Kk}$.
Therefore, it suffices to show
$$
\Phi(g)H(g)\cdot  d_{Kk} + \frac{(2g)!}{2} \left((2g)!+v_p((2g)!) + v_p(2)((2g)!-1)\right)
<g\cdot (2g)!\cdot \Phi(g)H(g)\cdot \mu \cdot d_{Kk}
$$
for the proof but this is clear.
\end{proof}

\begin{remark}
\label{MT:rem}
In the above proof of Theorem \ref{MT:CM:refined}, 
we referred the field extension $L/K$  of Theorem  \ref{ST-Sil} (4) and 
the upper bound $\Phi(g)H(g)$ of $[L:K]$. 
By Theorem \ref{ST-Sil} (1),  
we may refer the field $K(A[m])$ instead of the above $L$.
Since we have a natural embedding from $\mrm{Gal}(K(A[m])/K)$
into $GL(A[m])\simeq GL_{2g}(\mbb{Z}/m\mbb{Z})$,
we obtain a bound for the extension degree of $K(A[m])/K$; 
we have 
$[K(A[m])/K]\le G(g)$, where 
\begin{align*}
G(n):=\sharp GL_{2n}(\mbb{Z}/m\mbb{Z})
=
\left\{
\begin{array}{cl}
\prod^{2n-1}_{i=0}(3^{2n}-3^i) &\quad 
\mrm{if}\ p\not=3, \cr
2^{4n^2}\prod^{2n-1}_{i=0}(2^{2n}-2^i) 
&\quad \mrm{if}\ p=3.
\end{array}
\right.
\end{align*}
for $n>0$. 
Note that we have $G(n)<m^{4n^2}$.
It is not difficult to check  the inequalities  $\Phi(1)H(1) > G(1)$ and 
$\Phi(g)H(g) < G(g)$ for   $g>1$ (see Section \ref{Phi-H} below).
Hence, only in the case $g=1$ of elliptic curves, 
we can obtain smaller bound than that given in Theorem \ref{MT:CM:refined}
by replacing $\Phi(g)H(g)$ with $G(1)$.
\end{remark}

Applying Theorem \ref{MT:CM} with $k=\mbb{Q}_p$ and $\pi=p$,
we  immediately obtain the following.
\begin{corollary}
\label{MT:CM:cycl}
Let $A$ be a $g$-dimensional abelian variety  over a $p$-adic field 
$K$ with complex multiplication.
Then we have 
$$
A(K(\mu_{p^{\infty}}))[p^{\infty}] \subset A[p^{C}]
$$
where
\begin{align*}
C & := 2g^2\cdot (2g)!\cdot \Phi(g)H(g)\cdot d_{K} 
+12g^2-18g+10 
\end{align*}
In particular, we have 
$$
\sharp A(K(\mu_{p^{\infty}}))[p^{\infty}]\le p^{2gC}.
$$
\end{corollary}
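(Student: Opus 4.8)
The plan is to obtain Corollary~\ref{MT:CM:cycl} as the special case of Theorem~\ref{MT:CM} corresponding to the choice $k=\mbb{Q}_p$ and $\pi=p$; the entire proof consists of checking that the hypotheses of that theorem are met for this choice and then substituting the resulting values into the constant $C$. As recalled in the introduction, the Lubin-Tate extension of $\mbb{Q}_p$ attached to the uniformizer $p$ is $\mbb{Q}_{p,p}=\mbb{Q}_p(\mu_{p^{\infty}})$ (the Lubin-Tate formal group is $\widehat{\mbb{G}}_m$), so with $k=\mbb{Q}_p$ and $\pi=p$ one has $Kk=K$ and $Kk_{\pi}=K(\mu_{p^{\infty}})$; in particular $d_{Kk}=d_K$.

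First I would verify hypothesis (i) of Theorem~\ref{MT:CM}. Since $k=\mbb{Q}_p$ we have $q_k=p$ and $\mrm{Nr}_{k/\mbb{Q}_p}(\pi)=\mrm{Nr}_{\mbb{Q}_p/\mbb{Q}_p}(p)=p$, hence $q_k^{-1}\mrm{Nr}_{k/\mbb{Q}_p}(\pi)=1$, which is (trivially) a root of unity; the smallest positive integer $\mu$ with $1^{\mu}=1$ is $\mu=1$. Next I would verify hypothesis (ii): $d_k=d_{\mbb{Q}_p}=1$, which is prime to $(2g)!$. Both hypotheses therefore hold, so Theorem~\ref{MT:CM} applies to every $g$-dimensional CM abelian variety $A$ over $K$ and yields $A(Kk_{\pi})[p^{\infty}]\subset A[p^C]$ with
\[
C = 2g^2\cdot(2g)!\cdot\Phi(g)H(g)\cdot\mu\cdot d_{Kk}+12g^2-18g+10 .
\]

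Plugging in $\mu=1$, $d_{Kk}=d_K$, and $Kk_{\pi}=K(\mu_{p^{\infty}})$ gives precisely $A(K(\mu_{p^{\infty}}))[p^{\infty}]\subset A[p^C]$ with the constant displayed in the corollary. The final cardinality estimate then follows because $A[p^C]\simeq(\mbb{Z}/p^C\mbb{Z})^{\oplus 2g}$, so $\sharp A(K(\mu_{p^{\infty}}))[p^{\infty}]\le\sharp A[p^C]=p^{2gC}$. There is no real obstacle here: the only points that need a word of care are the identification $\mbb{Q}_{p,p}=\mbb{Q}_p(\mu_{p^{\infty}})$ and the elementary bookkeeping that forces $\mu=1$ and $d_{Kk}=d_K$; all of the substantive content has already been established in Theorem~\ref{MT:CM} (equivalently, in its refined form Theorem~\ref{MT:CM:refined}).
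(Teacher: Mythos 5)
Your proof is correct and coincides with the paper's: the corollary is obtained exactly by applying Theorem~\ref{MT:CM} with $k=\mbb{Q}_p$ and $\pi=p$, noting that $k_{\pi}=\mbb{Q}_p(\mu_{p^{\infty}})$, $\mu=1$, $d_k=1$, and $d_{Kk}=d_K$. No further comment is needed.
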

Next we show Theorem \ref{MT:CM:KT}.

\begin{proof}[Proof of Theorem \ref{MT:CM:KT}]
We follow essentially the same argument as that of Theorem \ref{MT:CM:refined}.
Put $\hat{K}=K(\sqrt[p^{\infty}]{K})$.

{\bf Step 1.} 
First we consider  the case  where $A$ has good reduction over $K$ 
and all the endomorphisms of $A$ are defined over $K$.
Put $\nu=v_p(d_K)+1+v_p(2)$ and 
\begin{align*}
C_g(K) & =v_p(d_K) +\nu+ \frac{(2g)!}{2} \left((2g)!+v_p((2g)!) + v_p(2)((2g)!-1)\right), \\
\Delta_g(K) & =\mrm{Max}\left\{ C_g(K), L_g((2g)!\cdot p^{\nu} \cdot d_K) \right\}.
\end{align*}
Following the proof of  Proposition \ref{MT:CM:good}, 
we show 
\begin{equation}
\label{hatK}
A(\hat{K})[p^{\infty}] \subset A[p^{C'}]
\end{equation}
where 
$
C':= 2g\Delta_g(K) + 12g^2-18g+10.
$
Let $\rho\colon G_K\to GL_{\mbb{Z}_p}(T_p(A))\simeq GL_{2g}(\mbb{Z}_p)$,
$M/\mbb{Q}_p$ 
and $\psi_1,\dots,\psi_n\colon G_K\to M^{\times}$
be as in the proof of  Proposition \ref{MT:CM:good}.
If we denote by $\hat{K}_{\mrm{ab}}$ 
the maximal abelian extension of $K$ contained in $\hat{K}$, 
all the points of $A(\hat{K})[p^{\infty}]$ are in fact defined over 
$\hat{K}_{\mrm{ab}}$  since $\rho$ is abelian.
Thus, setting 
$
c:=\mrm{Min}\{v_p(\det (\rho(\sigma)-E)))\mid \sigma\in {G_{\hat{K}_{\mrm{ab}}}} \}, 
$
we find 
\begin{equation}
\label{order:det2}
A(\hat{K})[p^{\infty}] = A(\hat{K}_{\mrm{ab}})[p^{\infty}]   \subset A[p^c]
\end{equation}
if $c$ is finite (see arguments just above \eqref{order:det}).
On the other hand, 
we set $G:=\mrm{Gal}(\hat{K}/K)$ and $H:=\mrm{Gal}(\hat{K}/K(\mu_{p^{\infty}}))$. 
Let $\chi_p\colon G_K\to \mbb{Z}_p^{\times}$ be the $p$-adic cyclotomic character.
Since we have $\sigma \tau \sigma^{-1}=\tau^{\chi_p(\sigma)}$ for any $\sigma\in G$
and $\tau\in H$,
we see $(G,G)\supset (G,H)\supset H^{\chi_p(\sigma)-1}$.
Hence we have a natural surjection
\begin{equation}
\label{H}
H/H^{\chi_p(\sigma)-1}
\twoheadrightarrow 
H/\overline{(G,G)}
=\mrm{Gal}(\hat{K}_{\mrm{ab}}/K(\mu_{p^{\infty}}))
\quad \mbox{for any $\sigma\in G$}.
\end{equation}
\begin{lemma}
We have $\chi_p(\sigma_0)-1=p^{\nu}$ 
for some $\sigma_0\in G$.
\end{lemma}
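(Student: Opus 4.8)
The plan is to use that the $p$-adic cyclotomic character $\chi_p \colon G_K \to \mathbb{Z}_p^{\times}$ is trivial on $G_{K(\mu_{p^{\infty}})}$, hence, since $K(\mu_{p^{\infty}}) \subseteq \hat{K}$, it factors through $G = \mathrm{Gal}(\hat{K}/K)$ and even through its quotient $\mathrm{Gal}(K(\mu_{p^{\infty}})/K)$. So it suffices to produce an element of $\mathrm{Gal}(K(\mu_{p^{\infty}})/K)$ on which $\chi_p$ takes the value $1 + p^{\nu}$ and then lift it to $G$. Identifying $\chi_p$ with the injection of $\mathrm{Gal}(K(\mu_{p^{\infty}})/K)$ into $\mathbb{Z}_p^{\times} = \mathrm{Gal}(\mathbb{Q}_p(\mu_{p^{\infty}})/\mathbb{Q}_p)$, its image $U$ is the closed subgroup fixing $K \cap \mathbb{Q}_p(\mu_{p^{\infty}})$, so $[\mathbb{Z}_p^{\times} : U] = [K \cap \mathbb{Q}_p(\mu_{p^{\infty}}) : \mathbb{Q}_p]$ divides $d_K$.

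Next I would pin down the shape of $U$. Write $\mathbb{Z}_p^{\times} = \Delta \times W$ with $\Delta = \mu_{p-1}$, $W = 1 + p\mathbb{Z}_p$ when $p$ is odd, and $\Delta = \{\pm 1\}$, $W = 1 + 4\mathbb{Z}_2$ when $p = 2$. Since $\Delta$ is finite of order prime to $p$ while $W$ is pro-$p$, the closed subgroup $U$ decomposes as $(U \cap \Delta) \times (U \cap W)$; moreover $U \cap W$ is one of the subgroups $1 + p^{n}\mathbb{Z}_p$ of $W$, with $[W : U \cap W]$ equal to $p^{n-1}$ when $p$ is odd and to $2^{n-2}$ when $p = 2$. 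From $[\mathbb{Z}_p^{\times} : U] = [\Delta : U \cap \Delta]\cdot[W : U \cap W]$ dividing $d_K$, the $p$-power factor $[W : U \cap W]$ divides $d_K$, which forces $n - 1 \le v_p(d_K)$ if $p$ is odd and $n - 2 \le v_2(d_K)$ if $p = 2$. In both cases this reads $n \le \nu$, where $\nu = v_p(d_K) + 1 + v_p(2)$, the extra $v_p(2)$ being exactly what accounts for $W$ starting at level $2$ when $p = 2$.

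Finally, since $n \le \nu$ we have $1 + p^{\nu}\mathbb{Z}_p \subseteq 1 + p^{n}\mathbb{Z}_p = U \cap W \subseteq U$, so $1 + p^{\nu}$ lies in $U$; lifting to $G$ the corresponding element of $\mathrm{Gal}(K(\mu_{p^{\infty}})/K)$ produces $\sigma_0 \in G$ with $\chi_p(\sigma_0) = 1 + p^{\nu}$, hence $\chi_p(\sigma_0) - 1 = p^{\nu}$, as required. The argument is essentially routine group theory of $\mathbb{Z}_p^{\times}$, so there is no real obstacle; the only place demanding a moment's care is the $p = 2$ case, for which the definition of $\nu$ has been arranged precisely to make the final inclusion go through.
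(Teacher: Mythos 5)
There is a gap in the $p=2$ case. You claim that, since ``$\Delta$ is finite of order prime to $p$ while $W$ is pro-$p$,'' the closed subgroup $U$ decomposes as $(U\cap\Delta)\times(U\cap W)$. That premise is false when $p=2$: there $\Delta=\{\pm1\}$ has order $2$, which is not prime to $p$, and $\mathbb{Z}_2^{\times}\cong\mathbb{Z}/2\times\mathbb{Z}_2$ is pro-$2$ in both factors, so a closed subgroup need not split along this product. A concrete counterexample: take $U$ to be the closure of $\langle -5\rangle$ in $\mathbb{Z}_2^{\times}$. Then $U\cap\Delta=\{1\}$ and $U\cap W=1+8\mathbb{Z}_2$, so $(U\cap\Delta)\times(U\cap W)=1+8\mathbb{Z}_2$ has index $4$, while $U$ itself has index $2$. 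Consequently the identity $[\mathbb{Z}_p^{\times}:U]=[\Delta:U\cap\Delta]\cdot[W:U\cap W]$ that you invoke can fail, and the argument as written does not go through at $p=2$ --- which is precisely the case you single out as delicate.

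The conclusion you need is salvageable, however, and the fix is cheaper than the false decomposition. In any abelian group one has $W/(U\cap W)\cong UW/U$, a subgroup of $\mathbb{Z}_p^{\times}/U$, so $[W:U\cap W]$ divides $[\mathbb{Z}_p^{\times}:U]$, which divides $d_K$. That is all that is used downstream (it gives $p^{\,n-1-v_p(2)}\mid d_K$, hence $n\le\nu$, for every $p$), so replacing the decomposition claim by this observation repairs the proof. For comparison, the paper sidesteps the issue at the start: it passes from $K$ to $K'=K(\mu_p)$ (resp.\ $K(\mu_4)$), so that $\chi_p(G_{K'})$ lands directly inside the pro-$p$ factor $W=1+p^{1+v_p(2)}\mathbb{Z}_p$ and no product decomposition of a general closed subgroup of $\mathbb{Z}_p^{\times}$ is ever needed; the divisibility then falls out of $[\mathbb{Q}_p(\mu_{p^{\ell}}):\mathbb{Q}_p]\mid[K':K]\,d_K$. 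Your route, once patched, is a perfectly reasonable alternative, but the decomposition step as stated is wrong.
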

\begin{proof}
We denote by $K'$ the field $K(\mu_p)$ or $K(\mu_4)$ if $p\not=2$ or $p=2$,
respectively. If we denote by $p^{\ell}$ the 
order of the set of $p$-power roots of unity in $K'$,
we see $K'\cap \mbb{Q}_p(\mu_{p^{\infty}})=\mbb{Q}_p(\mu_{p^{\ell}})$ and thus
$\chi_p(G_{K'})=1+p^{\ell}\mbb{Z}_p$. 
Furthermore, since  
$[\mbb{Q}_p(\mu_{p^{\ell}}):\mbb{Q}_p]$ divides $[K':K][K:\mbb{Q}_p]$,
we see $p^{\ell-1-v_p(2)}\mid d_K$. 
Hence we obtain $\chi_p(G_{K'})\supset 1+p^{\nu}\mbb{Z}_p$ and the lemma follows.
\end{proof}
By the lemma above and \eqref{H}, we see that  
$\mrm{Gal}(\hat{K}_{\mrm{ab}}/K(\mu_{p^{\infty}}))$
is of exponent $p^{\nu}$, that is, 
$\sigma\in G_{K(\mu_{p^{\infty}})}$ implies 
$\sigma^{p^{\nu}}\in G_{\hat{K}_{\mrm{ab}}}$.
This shows 
$
c\le \mrm{Min}\{v_p(\det (\rho(\sigma)^{p^{\nu}}-E)))\mid 
\sigma\in G_{K(\mu_{p^{\infty}})} \}.
$
Mimicking the arguments for inequalities  \eqref{later}, 
we find 
$$
c  \le
\mrm{Min}\left\{ \sum^n_{i=1} d_{F_i}v_p(\psi^{p^{\nu}}_{i,K_M}(\pi \omega)^{-1}-1) 
\mid \omega\in \ker \mrm{Nr}_{M/\mbb{Q}_p} \right\}.
$$
Now the inequality \eqref{order:det2} follows 
by completely the same method  as the proof of Proposition 
\ref{MT:CM:good} (with replacing the pair $(k,\mu)$ there with $(\mbb{Q}_p,p^{\nu})$).

{\bf Step 2.} 
Next we consider the general case.
Since $A$ has complex multiplication, 
it follows from Theorem \ref{ST-Sil} (4) 
that there exists a finite extension $L/K$ 
such that 
$d_{L/K}\le \Phi(g)H(g)$, 
$A$ has good reduction over $L$ 
and all the endomorphisms of $A$
are defined over $L$. 
Thus we can apply  the result of Step 1 to $A/L$; 
we have 
$$
A(\hat{K})[p^{\infty}] \subset A(\hat{L})[p^{\infty}] \subset A[p^{C''}]
$$
where $C'':=2g\Delta_g(L) + 12g^2-18g+10$.
We find  
\begin{align*}
L_g((2g)!\cdot p^{v_p(d_L)+1+v_p(2)}\cdot d_L)
&= L_g((2g)!\cdot p^{1+v_p(2)}\cdot 
p^{v_p(d_{L/K})}d_{L/K} \cdot p^{v_p(d_K)}d_K) \\
& \le L_g((2g)!\cdot p^{1+v_p(2)}\cdot 
(d_{L/K})^2 \cdot p^{v_p(d_K)}d_K) \\
& \le g\cdot (2g)!\cdot p^{1+v_p(2)}\cdot 
(\Phi(g)H(g))^2 \cdot p^{v_p(d_K)}d_K.
\end{align*}
(For the last equality, see Remark \ref{Lg} (2).)
Now Theorem \ref{MT:CM:KT} immediately follows by 
 $\Delta_g(L)\le g\cdot (2g)!\cdot p^{1+v_p(2)}\cdot 
(\Phi(g)H(g))^2 \cdot p^{v_p(d_K)}d_K$.
\end{proof}


One of the keys for our arguments above  
is a theory of locally algebraic representations.
Thus our method  essentially works also for abelian varieties $A$
with the property that the $G_K$-action on the semi-simplification of 
$V_p(A)\otimes_{\mbb{Q}_p} \overline{\mbb{Q}}_p$
is abelian.
For example, this is the case where  $A$  has good ordinary reduction.

\begin{proposition}
\label{val:ord}
Let $g>0$ be a positive integer.
Let $K$ and $k$ be $p$-adic fields.
Let $\pi$ be a uniformizer of $k$.
Assume that  
$q_k^{-1}\mrm{Nr}_{k/\mbb{Q}_p}(\pi)$ is a root of unity;
we denote by $0<\mu<p$ the minimum integer so that 
$(q_k^{-1}\mrm{Nr}_{k/\mbb{Q}_p}(\pi))^{\mu}=1$.
Then, for any $g$-dimensional abelian variety $A$  over  $K$
with good ordinary reduction, 
we have 
$$
A(Kk_{\pi})[p^{\infty}] \subset A[p^{2gL_g(\mu d_{Kk/k}f_k)}].
$$
In particular, we have 
$$
\sharp A(Kk_{\pi})[p^{\infty}]
 \le p^{4g^2L_g(\mu d_{Kk/k}f_k)}  < p^{4g^3(\mu d_{Kk/k}f_k+1+v_p(2))}.
$$
\end{proposition}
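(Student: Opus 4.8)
The plan is to follow the proof of Proposition \ref{MT:CM:good}, exploiting the observation that good \emph{ordinary} reduction forces every Jordan-H\"older character of $V_p(A)\otimes_{\mbb{Q}_p}\overline{\mbb{Q}}_p$ to have a single Hodge-Tate weight; this is precisely the situation ``$r=0$'' occurring there, so the auxiliary input Theorem \ref{key:est2} --- responsible for the $\delta_{(i)}$ and $C(\cdots)$ terms in the CM bound, and for the extra hypotheses (i)/(i)$'$ of Theorem \ref{MT:CM:refined} --- never enters. First I would reproduce the elementary reduction from the proof of Proposition \ref{MT:CM:good}: writing $T=T_p(A)$, $\rho\colon G_K\to GL_{\mbb{Z}_p}(T)$ for the Galois action, and $c:=\mrm{Min}\{v_p(\det(\rho(\sigma)-E))\mid \sigma\in G_{Kk_{\pi}}\}$, a point of $A(Kk_{\pi})[p^{\infty}]$ is annihilated by $p^c$ as soon as $c$ is finite (the adjugate trick is purely formal and needs nothing about $A$), so it suffices to show $c\le 2gL_g(\mu d_{Kk/k}f_k)$.

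Next I would bring in the connected-\'etale sequence. Since $A$ has good ordinary reduction over $K$, hence over $Kk$, the $p$-divisible group of its N\'eron model yields an exact sequence $0\to T^0\to T_p(A)\to T^{\aet}\to 0$ of $G_K$-modules with $T^{\aet}$ unramified of rank $g$, and by Cartier duality $T^0\simeq \mrm{Hom}(T^{\aet}(A^{\vee}),\mbb{Z}_p(1))$, so that $T^0(-1)$ is unramified of rank $g$ as well. Passing to semisimplifications over $\overline{\mbb{Q}}_p$ therefore gives $(V_p(A)\otimes_{\mbb{Q}_p}\overline{\mbb{Q}}_p)^{\mrm{ss}}\simeq \bigoplus_{i=1}^{2g}\overline{\mbb{Q}}_p(\psi_i)$ for crystalline characters $\psi_i\colon G_K\to M^{\times}$ with values in a $p$-adic field $M$, where $\psi_1,\dots,\psi_g$ are unramified (Hodge-Tate weight set $\{0\}$) and $\psi_{g+1},\dots,\psi_{2g}$ have Hodge-Tate weight set $\{1\}$, and $\det(\rho(\sigma)-E)=\prod_{i=1}^{2g}(\psi_i(\sigma)-1)$ since the characteristic polynomial sees only the semisimplification. (Unlike in Proposition \ref{MT:CM:good}, neither the $K$-rationality of the endomorphisms nor the coprimality of $d_k$ with $(2g)!$ is needed here: the decomposition now comes from ordinariness rather than complex multiplication, and Theorem \ref{key:est2} is not invoked.)

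I would then estimate $c$ exactly as in the proof of Lemma \ref{key:Wei}. As the $\psi_i$ take values in $\cO_M^{\times}$ one has $v_p(\psi_i(\sigma)-1)\le v_p(\psi_i(\sigma)^{\mu}-1)$ for every $\sigma$, so by monotonicity of $\mrm{Min}$ and Proposition \ref{vplem} applied to $\psi_1^{\mu},\dots,\psi_{2g}^{\mu}$ with the element $\omega=\pi\in (Kk)^{\times}$ (note $\mrm{Nr}_{Kk/k}(\pi)=\pi^{d_{Kk/k}}\in\pi^{f_{Kk/k}\mbb{Z}}$) we get $c\le \sum_{i=1}^{2g}v_p(\psi_{i,Kk}(\pi)^{-\mu}-1)$. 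For each $i$, Conrad's Proposition \ref{LA2} gives $\psi_{i,Kk}(\pi)^{-1}=\alpha_i^{-e_{Kk/k}}\psi_{i,\mrm{alg}}(\pi)^{-1}$ with $\alpha_i:=\psi_{i,Kk}(\pi_{Kk})\psi_{i,\mrm{alg}}(\pi_{Kk})^{-1}$ for a uniformizer $\pi_{Kk}$ of $Kk$; then, just as in cases (I) ($\psi_{i,\mrm{alg}}$ trivial) and (II) ($\psi_{i,\mrm{alg}}=\mrm{Nr}_{Kk/\mbb{Q}_p}^{-1}$ on $\mbb{Q}_p$-points) of Lemma \ref{key:Wei}, Lemma 9 of \cite{Oz1} identifies the relevant power of $\alpha_i$ with a unit root of the characteristic polynomial of geometric Frobenius of $\overline{A}$, resp.\ $\overline{A^{\vee}}$, over $\mbb{F}_{L'}$, where $L'/Kk$ is unramified of degree $\mu e_{Kk/k}$, and the Weil bound yields $v_p(\psi_{i,Kk}(\pi)^{-\mu}-1)\le L_g(f_{L'})=L_g(\mu e_{Kk/k}f_{Kk})=L_g(\mu d_{Kk/k}f_k)$. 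The crucial simplification over Lemma \ref{key:Wei} occurs in case (II): because $\mu$ is here the \emph{exact} order of $\beta:=q_k^{-1}\mrm{Nr}_{k/\mbb{Q}_p}(\pi)$, one has $\beta^{\mu}=1$ on the nose, so the error term $\beta^{\mu d_{Kk/k}}-1$ appearing when one expands $\psi_{i,Kk}^{\mu}(\pi)^{-1}-1$ vanishes outright, and no analogue of hypothesis (i) or (i)$'$ is required. Since each $\psi_{i,Kk}(\pi)$ is a Weil number of nonzero weight, no power of it equals $1$, so the sum is finite, whence $c$ is finite; summing the $2g$ bounds gives $c\le 2gL_g(\mu d_{Kk/k}f_k)$, hence $A(Kk_{\pi})[p^{\infty}]\subset A[p^{2gL_g(\mu d_{Kk/k}f_k)}]$, and the two ``in particular'' estimates follow from $\sharp A[p^m]=p^{2gm}$ together with $L_g(m)<g(m+1+v_p(2))$ of Remark \ref{Lg}(1).

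The step I expect to be the main obstacle is the second one: securing, with a clean citation, that good ordinary reduction splits $V_p(A)$ (up to semisimplification) into an unramified part and a Tate-twisted unramified part, with the resulting characters crystalline of the single Hodge-Tate weights $0$ and $1$ claimed above, so that Conrad's Proposition \ref{LA2} and Lemma 9 of \cite{Oz1} apply to them verbatim as in Lemma \ref{key:Wei}. Once that structural input is in place, everything after it is a strict simplification of the computation already carried out for Proposition \ref{MT:CM:good}.
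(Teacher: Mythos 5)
Your proposal is correct and follows essentially the same route as the paper: reduce to bounding $c=\mrm{Min}\{v_p(\det(\rho(\sigma)-E))\mid\sigma\in G_{Kk_\pi}\}$, use the connected-\'etale sequence to split $V_p(A)^{\mrm{ss}}\otimes\overline{\mbb{Q}}_p$ into $2g$ crystalline characters with Hodge--Tate weight $\{0\}$ or $\{1\}$, pass through Proposition \ref{vplem} and the Lemma \ref{key:Wei} computation, and apply the Weil bound over the unramified extension of degree $\mu e_{Kk/k}$. You also correctly identify the two key simplifications over the CM case (no appeal to Theorem \ref{key:est2}, and $\beta^\mu=1$ exactly so the error term vanishes), which match the paper's reasoning.
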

\begin{proof}
Put $V=V_p(A)$, $T=T_p(A)$ and 
$
c=\mrm{Min}\{v_p(\det (\rho(\sigma)-E)))\mid \sigma\in {G_{Kk_{\pi}}}\}.
$
By the same argument as the beginning of the proof 
of Proposition \ref{MT:CM:good},
we obtain 
\begin{equation}
\label{order:det:ord}
A(Kk_{\pi})[p^{\infty}] \subset A[p^c]
\end{equation}
if $c$ is finite.
Since $A$ has good ordinary reduction, we have an exact sequence 
$0\to V_1\to V\to V_2\to 0$ of $\mbb{Q}_p[G_K]$-modules
with the following properties.
\begin{itemize} 
\item[(i)] $V_1\simeq W\otimes_{\mbb{Q}_p} \mbb{Q}_p(1)$ for some unramified representation $W$ of $G_K$, and 
\item[(ii)] $ V_2$ is unramified. 
\end{itemize} 
Hence, taking a $p$-adic field $M$ large enough,  
we have $(V\otimes_{\mbb{Q}_p} M)^{\mrm{ss}}\simeq 
\oplus^{2g}_{i=1} M(\psi_i)$
for some continuous crystalline characters 
$\psi_i\colon G_K\to M^{\times}$.
Furthermore, 
for every $i$, the set of  the Hodge-Tate weights of $M(\psi_i)$ 
is either $\{1\}$ or $\{0\}$.
By Proposition \ref{vplem},  we have 
$c\le \sum^{2g}_{i=1}v_p(\psi^{\mu}_{i,Kk}(\pi)^{-1}-1)$.
Let $K'$ be the unramified extension of $Kk$
of degree $\mu e_{Kk/k}$.
By a similar method of the proof of Lemma \ref{key:Wei}, 
we find that 
$\psi^{\mu}_{i,Kk}(\pi)^{-1}$ is 
a unit root of 
the characteristic polynomial  $f(T)$ of 
the geometric Frobenius endomorphism of  
$\overline{A}_{/\mbb{F}_{K'}}$,  
otherwise 
$\psi^{\mu}_{i,Kk}(\pi)^{-1}$ is  a unit root of 
the characteristic polynomial  $f^{\vee}(T)$ of 
the geometric Frobenius endomorphism of  
$\overline{A^{\vee}}_{/\mbb{F}_{K'}}$.
We know  $f(1)=\sharp \overline{A}(\mbb{F}_{q_{K'}})$ and
$f^{\vee}(1)=\sharp \overline{A^{\vee}}(\mbb{F}_{q_{K'}})$,
and their $p$-adic valuations  are bounded by 
$L_g(f_{K'})$ by the Weil bound.
Since 
we have 
$
f_{K'}  = f_{K'/Kk}f_{Kk}=\mu d_{Kk/k}f_k, 
$
we obtain 
$c\le \sum^{2g}_{i=1}v_p(\psi^{\mu}_{i,Kk}(\pi)^{-1}-1)\le 2g L_g(\mu d_{Kk/k}f_k)$.
Now the result follows from \eqref{order:det:ord}.
\end{proof}


\section{Abelian varieties  over number fields}

In this section, we suppose that $K$ is a number field.
The goal of this section is to give a proof of Theorem \ref{gsurf2} in Introduction.
The theorem is an immediate consequence of the following proposition. 
\begin{proposition}
Let $g,K,d$ and $h$ be as in Theorem \ref{gsurf2}.

\noindent
{\rm (1)} 
Let $A$ be a $g$-dimensional abelian variety over $K$ with semi-stable 
reduction everywhere.
Let $p_0$ be the smallest prime number such that 
$A$ has  good reduction at some finite place of $K$ above $p_0$.
Then $A(K(\mu_{\infty}))[p]$ is zero if 
$p>(1+\sqrt{p_0}^{dh})^{2g}$, $p$ is unramified in $K$
and $A$ has good reduction at some finite place of $K$ above $p$. 

\noindent
{\rm (2)} 
Let $A$ be a $g$-dimensional abelian variety over $K$ with complex multiplication
which has good reduction everywhere. 
Then, for any prime $p$, we have 
$$
 A(K(\mu_{\infty}))[p^{\infty}]\subset A[p^C]
$$
where $C:=2g^2\cdot (2g)!\cdot \Phi(g)H(g)\cdot dh
+12g^2-18g+10$.
\end{proposition}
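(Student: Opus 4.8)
The plan is to derive Theorem~\ref{gsurf2} from the Proposition, and to prove the two parts of the Proposition by localizing the problem at suitable primes and invoking, respectively, the Weil bound (part~(1)) and the local estimate Theorem~\ref{key:est2} together with the results of Section~\ref{Mainpart} (part~(2)). For the deduction: since $A$ has good reduction at every finite place of $K$, in particular at every place above $2$, the prime $p_0$ attached to $A$ in part~(1) equals $2$. Let $S$ be the finite set of primes $p$ with $p\le(1+\sqrt{2}^{\,dh})^{2g}$ or $p$ ramified in $K$. If $p\notin S$, then $p$ is unramified in $K$, $A$ has good reduction at the places above $p$, and $p>(1+\sqrt{2}^{\,dh})^{2g}$, so part~(1) gives $A(K(\mu_{\infty}))[p]=0$ and hence $A(K(\mu_{\infty}))[p^{\infty}]=0$. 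If $p\in S$, part~(2) gives $A(K(\mu_{\infty}))[p^{\infty}]\subset A[p^{C}]$ with $C$ the $p$-independent exponent in the statement. Writing $A(K(\mu_{\infty}))_{\mrm{tor}}=\bigoplus_{p}A(K(\mu_{\infty}))[p^{\infty}]$ one gets $A(K(\mu_{\infty}))_{\mrm{tor}}\subset A[N]$ with $N=\left(\prod_{p\in S}p\right)^{C}$, which is Theorem~\ref{gsurf2}.

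\emph{Part~(1).} Suppose $P\in A(K(\mu_{\infty}))[p]$ is nonzero and put $L:=K(P)$. Since $\mrm{Gal}(L/K)$ acts faithfully on the line $\langle P\rangle\cong\mbb{Z}/p$, it embeds into $(\mbb{Z}/p)^{\times}$, so $L/K$ is abelian of degree dividing $p-1$; and since $A$ has semi-stable reduction at every finite place, inertia at a finite place $\mathfrak{q}\nmid p$ acts unipotently on $A[p]$, hence trivially on the Galois-stable line $\langle P\rangle$, so $L/K$ is unramified at all finite places prime to $p$. I then invoke Ribet's argument of \cite{KL}: using in addition that $p$ is unramified in $K$ and that $A$ has good reduction at a place above $p$ (which pins down the ramification of $L/K$ above $p$), one shows that a reduction of $P$, at a place of $L$ lying over a place $\mathfrak{v}\mid p_0$ of good reduction, is a nonzero point of $\bar A$ rational over a finite field $\mbb{F}$ with $\sharp\mbb{F}\le p_0^{\,dh}$ — the factor $d$ being the residue degree of $\mathfrak{v}$ over $\mbb{Q}$ and the factor $h$ bounding the residue degree of $L$ over $K$ at that place. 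As $p\neq p_0$, reduction is injective on $p$-torsion, so $p\mid\sharp\bar A(\mbb{F})$, and the Weil bound forces $p\le\sharp\bar A(\mbb{F})\le(1+\sqrt{p_0}^{\,dh})^{2g}$, contradicting the hypothesis; hence $A(K(\mu_{\infty}))[p]=0$.

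\emph{Part~(2).} The argument parallels the proof of Theorem~\ref{MT:CM:refined} (hence ultimately Theorem~\ref{key:est2}), with the local Lubin--Tate extension replaced by the global cyclotomic one. By Theorem~\ref{ST-Sil}~(4) there is a finite extension $K_{1}/K$ of degree dividing $\Phi(g)H(g)$ over which all endomorphisms of $A$ are defined; $A$ still has good reduction everywhere over $K_{1}$, so the characters $\psi_{i}\colon G_{K_{1}}\to M^{\times}$ obtained as in the proof of Proposition~\ref{MT:CM:good} are crystalline at the places above $p$, and, being $p$-adic avatars of algebraic Hecke characters, are unramified outside $p$. For $P\in A(K(\mu_{\infty}))[p^{\infty}]\subset A(K_{1}(\mu_{\infty}))[p^{\infty}]$ the order of $P$ is bounded, exactly as in the discussion around \eqref{order:det}, by $\mrm{Min}\{v_{p}(\det(\rho(\sigma)-E))\mid\sigma\in G_{K_{1}(\mu_{\infty})}\}=\mrm{Min}\{\sum_{i}d_{F_{i}}v_{p}(\psi_{i}(\sigma)-1)\mid\sigma\in G_{K_{1}(\mu_{\infty})}\}$. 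The task is to produce inside $G_{K_{1}(\mu_{\infty})}$ — i.e.\ inside the kernel of the global cyclotomic character — an element playing the role of the $\hat\omega$ of Theorem~\ref{key:est2}. Global class field theory furnishes norm-$1$ elements of the required type, and evaluating their $p$-adic size reduces to a completion $K_{1,\mathfrak{v}}$ at a place $\mathfrak{v}\mid p$, where Theorem~\ref{key:est2} applied to the crystalline $\psi_{i}$ (with Lemma~\ref{key:Wei} handling the Hodge--Tate weight $\{0\}$ and $\{1\}$ pieces) gives the bound. Compared with Corollary~\ref{MT:CM:cycl}, the new feature is exactly this passage from $\mrm{Gal}(K(\mu_{\infty})/K)$ to the decomposition groups above $p$; its defect is measured by the narrow class group, so that the effective degree of the local base becomes $\le\Phi(g)H(g)\cdot d\cdot h$ (endomorphism field, degree of $K$ over $\mbb{Q}$, class-group defect), and feeding this into Theorem~\ref{key:est2} yields the constant $C=2g^{2}\cdot(2g)!\cdot\Phi(g)H(g)\cdot dh+12g^{2}-18g+10$.

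\emph{Main obstacle.} The local ingredients — Theorem~\ref{key:est2}, the Weil bound, and the structure theory of Theorem~\ref{ST-Sil} — are available off the shelf; the real content is the global-to-local reduction in both parts. For part~(1) it is Ribet's control, by the narrow class number, of the field of definition of a prime-order torsion point lying in a cyclotomic extension; for part~(2) it is the class-field-theoretic comparison showing that this same class number is the only extra cost of working over $K(\mu_{\infty})$ rather than over a $p$-adic cyclotomic tower. I expect the bookkeeping in this comparison — making the factor $h$, and not $p-1$ or $d_{K}!$, come out — to be the delicate point.
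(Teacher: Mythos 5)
Your overall architecture (deduce Theorem \ref{gsurf2} from the Proposition; handle large unramified primes of good reduction via reduction modulo a place over $p_0$ and the Weil bound; handle the remaining primes via the local CM machinery) matches the paper's, but both parts have genuine gaps. In part (1) you assume the line $\langle P\rangle$ is Galois-stable (it need not be; the paper instead takes a \emph{simple} $\mathrm{Gal}(\overline{K}/K')$-submodule $W$ of $A[p]^{G_{K(\mu_\infty)}}$ and cites Ribet for its one-dimensionality), and you then assert that Ribet's argument lets you reduce $P$ at a place over $p_0$ into a residue field of size at most $p_0^{dh}$. That is false as stated. The correct statement is that $G=\mathrm{Gal}(\overline{K}/K')$ acts on $W$ by $\overline{\chi}_p^{\,n}$ with $n\in\{0,1\}$ (Tate--Oort, using good reduction at a place over $p\ne 2$). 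In the case $W\cong\mathbb{F}_p(1)$ the point is \emph{not} defined over the everywhere-unramified extension $K'$: the residue degree of $K'(\mu_p)/K'$ at $p_0$ can be as large as $p-1$, so your bound $\sharp\mathbb{F}\le p_0^{dh}$ fails and the Weil bound yields nothing independent of $p$. The paper escapes by dualizing: the surjection $A^\vee[p]\twoheadrightarrow\mathbb{F}_p$ has kernel $C$ stable under $G$, the isogenous variety $A':=A^\vee/C$ acquires a $K'$-rational point of order $p$, and good reduction at $\mathfrak{p}_0'$ is preserved under isogeny by \cite[\S1, Corollary 2]{ST}. Your sketch omits this dichotomy and the isogeny trick, which is the entire content of the step.

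In part (2) you propose to rerun Theorem \ref{key:est2} globally, producing norm-one elements inside the kernel of the global cyclotomic character by class field theory and then localizing; you yourself flag the bookkeeping that makes $h$ (rather than $p-1$ or $d_K!$) appear as ``the delicate point'' and do not carry it out, so this part is not proved. The paper's argument is a short reduction to the already-established local statement: good reduction everywhere and N\'eron--Ogg--Shafarevich make the $G_K$-action on $A[p^\infty]$ unramified outside $p$, hence by the Katz--Lang lemma ($L_p=K'(\mu_{p^\infty})$) one has $A(K(\mu_\infty))[p^\infty]=A(K'(\mu_{p^\infty}))[p^\infty]$, and since $[K':\mathbb{Q}]\le dh$ one applies Corollary \ref{MT:CM:cycl} over a completion of $K'$ at a place above $p$. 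No global analogue of Theorem \ref{key:est2} is needed; the narrow class number enters only through $[K':K]\mid h$.
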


\begin{proof}
Let $A$ be a $g$-dimensional abelian variety over $K$ with semi-stable reduction everywhere.
Let $K'$ be the maximal extension of  $K$ contained in $K(\mu_{\infty})$
which is unramified at all finite places of $K$. 
Note that $K'$ is a finite abelian extension of $K$.
In particular, it follows from class field theory that $[K':K]$ 
is a divisor of the narrow class number $h$ of $K$.
If we denote by $L_p$ the maximal extension of  
$K$ contained in $K(\mu_{\infty})$
which is unramified at all places except for places dividing $p$ and the infinite places,
then it is shown in \cite[Appendix, Lemma]{KL} that 
$L_p=K'(\mu_{p^{\infty}})$. 

(1) We give a proof of the assertion (1). 
Here we mainly follow Ribet's arguments in \cite{KL}.
We suppose that $p$ is prime to $2p_0$ 
and  also suppose that $p$ is  unramified in $K$.
Assume that $A(K(\mu_{\infty}))[p]\not= O$. 
We claim that there exists a $g$-dimensional abelian variety $A'$ over $K'$
which is $K'$-isogenous to $A$ such that 
 $A'(K')[p]\not= O$. 
We denote by $G$ and $H$ the absolute Galois groups of $K'$ and $K(\mu_{\infty})$,
respectively.
The assumption $A(K(\mu_{\infty}))[p]\not=O$  
is equivalent to say that $A[p]^H\not =O$. 
Let $W$ be a simple $G$-submodule of $A[p]^H$. 
Ribet showed in the proof of Theorem 2 of \cite{KL} that, 
since $A$ has semi-stable reduction everywhere over $K'$,
$W$ is one-dimensional over $\mbb{F}_p$ and  
the action of $G$ on $W$ factors through $\mrm{Gal}(K'(\mu_p)/K')$.
Since $p$ is unramified at $K'$,
we find that the $G$-action on $W$ is given by 
$\overline{\chi}_p^n$ for some $0\le n\le p-1$, where $\overline{\chi}_p$ 
is the mod $p$ cyclotomic character.
Moreover, since $A$ has good reduction at
some finite place of $K'$ above $p\ (\not=2)$, 
it follows from the classification of Tate and  Oort 
that $n$ is equal to $0$ or $1$.
Thus $W$ is isomorphic to $\mbb{F}_p$ or $\mbb{F}_p(1)$. 
If we are in the former case, we have $A'(K')[p]\not= O$ for $A':=A$.
Suppose that we are in the latter case.
Then there exists a surjection $A^{\vee}[p]\to \mbb{F}_p$ of $G$-modules.
If we denote by $C$ the kernel of this surjection, then 
the $G$-action on $A^{\vee}[p]$ preserves $C$.
This implies that  
$A':=A^{\vee}/C$ is an abelian variety defined over $K'$
and we find that there exists a trivial $G$-submodule of $A'[p]$ of order $p$.
Thus we have $A'(K')[p]\not=O$.  
This finishes  the proof of the claim.

Now we take a prime $\mfrak{p}'_0$ of $K'$ above $p_0$
such that $A$ has good reduction at $\mfrak{p}'_0$.
Since $A'$ above is $K'$-isogenous to $A$, 
we know that $A'$ has good reduction at  $\mfrak{p}'_0$ by \cite[\S 1, Corollary 2]{ST}.
If we denote by $K'_{\mfrak{p}'_0}$ the completion of $K'$ at $\mfrak{p}'_0$
and also denote by $\mbb{F}_{\mfrak{p}'_0}$ the residue field of $K'_{\mfrak{p}'_0}$, 
then reduction modulo $\mfrak{p}'_0$ gives an injective homomorphism
$$
A'(K')[p]\subset A'(K'_{\mfrak{p}'_0})[p]\hookrightarrow \bar{A}'(\mbb{F}_{\mfrak{p}'_0}).
$$
We recall that  $A'(K')[p]\not= O$. 
Since the order of  $\mbb{F}_{\mfrak{p}'_0}$ is bounded by 
$p_0^{dh}$, it follows from the Weil bound that 
we have 
$p<(1+\sqrt{p_0}^{dh})^{2g}$. 
This finishes the proof.

(2) We give a proof of the assertion (2). 
Let $A$ be an abelian variety as in the statement.
Since $A$  has good reduction everywhere over $K$,
it follows from the criterion of N\'eron-Ogg-Shafarevich 
that the $G_K$-action on $A[p^{\infty}]$ is unramified outside $p$.
This gives the fact that 
the $G_K$-action on $A(K(\mu_{p^{\infty}}))[p^{\infty}]$ factors through 
$\mrm{Gal}(L_p/K)=\mrm{Gal}(K'(\mu_{p^{\infty}})/K)$.
Thus we have 
$$
A(K(\mu_{\infty}))[p^{\infty}]
=A(K'(\mu_{p^{\infty}}))[p^{\infty}].
$$
Since we have $[K':\mbb{Q}]\le dh$, the result follows from 
Corollary \ref{MT:CM:cycl}.
\end{proof}

\section{Bounds on $\Phi(n)$ and $H(n)$}
\label{Phi-H}
We recall the definitions of $\Phi(n)$ and $H(n)$:
\begin{align*}
& \Phi(n):=
\mrm{Max}\{m\in \mbb{Z} \mid \mbox{$\varphi(m)$ divides $2n$} \}, \\
& H(n):=
\mrm{gcd}\{\sharp \mrm{GSp}_{2n}(\mbb{Z}/N\mbb{Z}) \mid 
N\ge 3 \}. 
\end{align*}
Here, $\varphi$ is the Euler's totient function. 
The lists of $\Phi(n)$, $H(n)$ 
(and $G(n)$ with $p\not=3$ appeared in Remark \ref{MT:rem})
for small $n$ are given at the end of this paper.
In this section, we study some upper bounds of $\Phi$ and $H$.

\subsection{The function $H$}
For the function $H$,  we refer results of \cite[\S3 and \S4]{Si1}.
The exact formula for $H(n)$ is as follows:
\begin{align*}
H(n)=\frac{1}{2^{n-1}}\prod_{q} q^{r(q)}
\end{align*}
where the product is over primes $q\le 2n+1$, 
\begin{align*}
r(2) = \left[n \right]+\sum^{\infty}_{j=0} \left[ \frac{2n}{2^j}\right],\  
\mbox{and}\    
r(q) = \sum^{\infty}_{j=0} \left[ \frac{2n}{q^j(q-1)}\right] \ \mbox{if $q$ is odd}.
\end{align*}
Moreover,    we have
\begin{theorem}[{\cite[Corollary 3.3]{Si1}}]
\label{H:bound}
We have 
$$
H(n) < 2(9n)^{2n}
$$
 for any $n>0$.
\end{theorem}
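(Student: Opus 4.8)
The plan is to estimate $H(n)$ directly from the exact formula recalled just above, writing
\[
H(n)=\frac{1}{2^{n-1}}\prod_{q\le 2n+1}q^{r(q)}=2^{\,r(2)-n+1}\prod_{3\le q\le 2n+1}q^{r(q)},
\]
and bounding each exponent by replacing the floors in the definition of $r(q)$ with the corresponding fractions and summing the resulting geometric series. For $q=2$ this gives $r(2)=n+\sum_{j\ge 0}\lfloor 2n/2^{j}\rfloor\le n+4n=5n$, so the first factor is at most $2^{4n+1}$ — the subtraction of $n-1$ in the exponent is precisely what prevents it from being as large as $2^{5n}$. For an odd prime $q\le 2n+1$ it gives $r(q)\le\sum_{j\ge 0}\frac{2n}{q^{j}(q-1)}=\frac{2nq}{(q-1)^2}$. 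Taking $\log_2$ and using $\log_2\bigl(2(9n)^{2n}\bigr)=1+2n\log_2(9n)$, the asserted bound follows once $(4n+1)+2n\sum_{3\le q\le 2n+1}\tfrac{q\log_2 q}{(q-1)^2}<1+2n\log_2(9n)$, and after cancelling and dividing by $2n$ this reduces to the single inequality
\[
S(n):=\sum_{3\le q\le 2n+1}\frac{q\log_2 q}{(q-1)^2}\ <\ \log_2\!\Bigl(\tfrac{9n}{4}\Bigr).
\]

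To control $S(n)$ I would isolate the leading term $\tfrac1q$, writing $\frac{q}{(q-1)^2}=\frac1q+\frac{2q-1}{q(q-1)^2}$: the remainder $\frac{2q-1}{q(q-1)^2}$ is of order $q^{-2}$, so $\sum_{q\ge 3}\frac{(2q-1)\log_2 q}{q(q-1)^2}$ converges to an explicit constant $C$, whose value is well under $\tfrac1{\ln 2}+\tfrac12$ (it is dominated by the first few terms $q=3,5,7$, with a crude bound for the tail). For the main term $\sum_{3\le q\le 2n+1}\frac{\log_2 q}{q}$ I would invoke an effective form of Mertens' second theorem: from the Rosser--Schoenfeld estimates one has $\sum_{p\le x}\frac{\log p}{p}\le \log x-1$ for all $x$ beyond an explicit bound $x_0$ (one can take $x_0$ around $30$). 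These combine to give $S(n)\le \log_2(2n+1)-\bigl(\tfrac1{\ln 2}+\tfrac12-C\bigr)\le \log_2(2n+1)$ for every $n$ with $2n+1\ge x_0$; and since $\tfrac{9n}{4}>2n+1$ for all $n\ge 5$, this is strictly below $\log_2(9n/4)$, which is exactly what is needed. The finitely many remaining $n$ (those below $x_0/2$, a dozen or so values) are dispatched directly: compute $H(n)$ from the exact formula — the small values are tabulated at the end of the paper — and check $H(n)<2(9n)^{2n}$ by inspection, which is extremely comfortable in that range.

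I expect the real obstacle to be pinning down the constants rather than any structural difficulty. The inequality $H(n)<2(9n)^{2n}$ is tight on the logarithmic scale (both sides grow like $2n\log_2 n$), so the $O(n)$ terms must be compared with care; in particular the naive bound $\sum_{p\le x}\frac{\log p}{p}<\log x$ — or even the classical Mertens error term of size $2$ — is \emph{not} good enough, and one genuinely needs the sharper effective constant (near $1.33$), an explicit Mertens threshold $x_0$, and a finite verification of the small $n$ below it. Everything else is routine manipulation of the geometric series defining $r(q)$.
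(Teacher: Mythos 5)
The paper does not prove this statement — it cites \cite[Corollary 3.3]{Si1} — so there is no in-paper argument for you to match; I'll assess your self-contained proof on its own terms. The algebraic setup is correct: replacing floors by geometric series gives $r(2)\le 5n$ (hence $2^{r(2)-n+1}\le 2^{4n+1}$) and $r(q)\le 2nq/(q-1)^2$, and taking $\log_2$ of both sides and cancelling the common $1+\cdots$ does reduce the claim to $S(n)<\log_2(9n/4)$. The decomposition $\tfrac{q}{(q-1)^2}=\tfrac1q+\tfrac{2q-1}{q(q-1)^2}$ is correct, and I checked the ingredients numerically: the convergent remainder $C$ is roughly $1.4$ (comfortably below $\tfrac1{\ln 2}+\tfrac12\approx 1.94$), and $\sum_{p\le x}\tfrac{\ln p}{p}\le\ln x-1$ first holds at about $x=29$, consistent with your $x_0\approx 30$. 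Two things worth flagging: (a) the reduced inequality $S(n)<\log_2(9n/4)$ actually \emph{fails} at $n=1$ (one gets $S(1)=\tfrac{3\log_2 3}{4}\approx 1.19$ versus $\log_2(9/4)\approx 1.17$), so the ``direct inspection of small $n$'' step in your plan is not optional padding — it is genuinely load-bearing for $n=1$, and you should make that explicit; (b) the effective Mertens bound $\sum_{p\le x}\tfrac{\ln p}{p}\le\ln x-1$ is not one of the displayed inequalities in Rosser--Schoenfeld (their Theorem 6 gives the weaker one-sided bounds $\ln x-2<\sum\tfrac{\ln p}{p}<\ln x$ valid for all $x>1$); to get the sharper constant you need their asymptotic estimate with the explicit Mertens-type constant $E\approx -1.3326$, valid only above a threshold, plus a finite computation to bridge $29\le x$ below that threshold. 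You clearly anticipate both issues (the remark that $\log x-2$ is ``not good enough'' and the planned small-$n$ check), so the argument is sound; it just needs those two points spelled out to be watertight.
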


\subsection{The function $\Phi$}
Next we consider the function $\Phi$. 
At first, we remark that $\Phi(n)$ must be even
since $\vphi(x)=\vphi(2x)$ if $x$ is odd. 
Furthermore, $\Phi(n)$ is not a power of $2$.
(In fact, we have $\vphi(2^r)=\vphi(2^{r-1}\cdot 3)$ if $r\ge 2$.)
Thus it holds that 
\begin{equation}
 \Phi(n) = \mrm{Max}\left\{m\in \mbb{Z}  \middle|
  \begin{aligned} 
    \mbox{$\varphi(m)$ divides $2n$, and $m=2^rx$} \\
    \mbox{where  $r\ge 1$ and   $x\ge 3$ is odd}
  \end{aligned}
  \right\}.
\end{equation}
We show some elementary formulas.
\begin{proposition}
\label{P:bound1}
{\rm (1)}  We have $\Phi(1)=6$ and  $6\le \Phi(n)<6n\sqrt[3]{n}$ for $n>1$.

\noindent
{\rm (2)} Put $t=v_2(n)+2$ and let $p_1=2 < p_2< \cdots <p_t$ be the first $t$ prime numbers. 
Then we have 
$$
\Phi(n)\le 2n\prod^t_{i=1}\frac{p_i}{p_i-1}.
$$
In particular, we have $\Phi(n)\le 6n$ if $n$ is odd.

\noindent
{\rm (3)} If $n>3$ is an odd prime, we have\footnote{
A prime number $p$ is called a {\it Sophie German prime} 
if $2p+1$ is also prime. 
It is not known whether there exist infinitely many Sophie German prime or not.
On the other hand, there exist infinitely many prime which is not Sophie German prime.
In fact, every prime number $p$ with $p\equiv 1 \ \mrm{mod}\ 3$ 
is not Sophie German prime.}
\begin{align*}
\Phi(n)
=
\left\{
\begin{array}{cl}
6 &\quad 
\mbox{if $2n+1$ is not prime}, \cr
4n+2 
&\quad 
\mbox{if $2n+1$ is prime}.
\end{array}
\right.
\end{align*}

\end{proposition}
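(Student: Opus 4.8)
The plan is to extract, for a fixed $n$, structural constraints on $m:=\Phi(n)$ from the single divisibility $\varphi(m)\mid 2n$ and to feed them into the elementary identity $m/\varphi(m)=\prod_{q\mid m}q/(q-1)$. By the remarks preceding the proposition, $m$ is even and not a power of $2$, so we may write $m=2^{a}q_1^{b_1}\cdots q_s^{b_s}$ with $a\ge 1$, $s\ge 1$ and odd primes $q_1<\cdots<q_s$. Since $\varphi$ is multiplicative, $\varphi(m)=2^{a-1}\prod_i q_i^{b_i-1}(q_i-1)$ and each $q_i-1$ is even, so $v_2(\varphi(m))\ge(a-1)+s$. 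Comparing with $v_2(\varphi(m))\le v_2(2n)=v_2(n)+1$ gives $a+s\le v_2(n)+2=t$; in particular $m$ has at most $t$ distinct prime divisors. This observation is the engine for all three parts.

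For (2): the map $x\mapsto x/(x-1)$ is decreasing for $x>1$ and each of its prime values exceeds $1$, so among integers with at most $t$ distinct prime divisors the ratio $m/\varphi(m)=\prod_{q\mid m}q/(q-1)$ is maximised by taking the prime divisors to be the first $t$ primes; hence $m/\varphi(m)\le\prod_{i=1}^{t}p_i/(p_i-1)$. Multiplying by $\varphi(m)\le 2n$ gives $\Phi(n)\le 2n\prod_{i=1}^{t}p_i/(p_i-1)$ directly. For $n$ odd, $t=2$ and $\prod_{i=1}^{2}p_i/(p_i-1)=2\cdot\tfrac32=3$, whence $\Phi(n)\le 6n$.

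For (1): the value $\Phi(1)=6$ is a one-line check among the $m$ with $\varphi(m)\in\{1,2\}$, and $\Phi(n)\ge 6$ holds since $\varphi(6)=2\mid 2n$. For the upper bound, set $P(t):=\prod_{i=1}^{t}p_i/(p_i-1)$; one has $P(2)=3$, and for $t\ge 2$ the $(t+1)$-st prime is $\ge 5$, so $P(t+1)/P(t)=p_{t+1}/(p_{t+1}-1)\le\tfrac54<2^{1/3}$. By induction $P(t)\le 3\cdot 2^{(t-2)/3}$ for all $t\ge 2$, with strict inequality once $t\ge 3$. Since $2^{t-2}=2^{v_2(n)}$ divides $n$, we get $2^{(t-2)/3}\le n^{1/3}$, so $\Phi(n)\le 2nP(t)\le 6n\cdot 2^{(t-2)/3}\le 6n\sqrt[3]{n}$; strictness holds because either $t\ge 3$ (strict induction step) or $t=2$, in which case $n$ is odd and $>1$, so $\Phi(n)\le 6n<6n\sqrt[3]{n}$. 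I expect this estimate to be the one delicate point: the factor $P(t)$, a priori of Mertens size $\sim e^{\gamma}\log p_t$, is controlled against $n^{1/3}$ only through $2^{v_2(n)}\mid n$ together with the clean numerical inequality $\tfrac54<2^{1/3}$ that forces $P(t)$ to grow slowly, and one must keep an eye on strictness in the boundary case $t=2$.

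For (3): when $n>3$ is an odd prime the divisors of $2n$ are $1,2,n,2n$, so $\varphi(m)\mid 2n$ means $\varphi(m)\in\{1,2,n,2n\}$; the value $n$ is impossible since $\varphi$ takes no odd value exceeding $1$, and $\varphi(m)\le 2$ forces $m\le 6$. Thus it remains to classify the solutions of $\varphi(m)=2n$. If an odd prime $q$ satisfied $q^2\mid m$, then $q\mid\varphi(m)=2n$ would force $q=n$, hence $n-1\mid 2n$, i.e. $n-1\mid 2$, contradicting $n>3$ — this is the single place where the hypothesis $n>3$ is essential. So $m=2^{a}q_1\cdots q_s$ with distinct odd primes $q_i$; the $2$-adic bookkeeping of the first paragraph, together with $v_2(2n)=1$ and $\varphi(m)=2n>2$, forces $s=1$ and $a\in\{0,1\}$, so the only candidates are $m=2n+1$ and $m=2(2n+1)$, each realised precisely when $2n+1$ is prime (both then give $\varphi(m)=2n$). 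Hence $\Phi(n)=6$ when $2n+1$ is not prime, and $\Phi(n)=\max\{6,\,2(2n+1)\}=4n+2$ when it is, using $n>3$ to ensure $4n+2>6$.
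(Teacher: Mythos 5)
Your proof is correct, and it follows essentially the same route as the paper: the $2$-adic constraint $a+s\le v_2(n)+2$ extracted from $\varphi(m)\mid 2n$, the comparison of $\prod_{q\mid m}q/(q-1)$ with the product over the first $t$ primes for (2), the numerical inequality $5/4<2^{1/3}$ combined with $2^{v_2(n)}\le n$ for (1), and the reduction to $m\in\{2n+1,\,2(2n+1)\}$ for (3). The only cosmetic difference is in (3), where you rule out higher prime powers via $\varphi(n^2)=n(n-1)\mid 2n$ rather than the paper's divisor analysis of $q^{e-1}(q-1)/2\mid n$; both are sound.
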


\begin{proof}
To check $\Phi(1)=6$ is an easy exercise.
Since $\vphi(6)=2\mid 2n$,
we have $\Phi(n)\ge 6$ for any $n$. 
Suppose $n>1$. 
We take an even integer $m>0$, which  is  of the form $2^rx$  
where  $r\ge 1$ and  $x\ge 3$ is odd, 
such that $\vphi(m)\mid 2n$. 
Let $m=2^r \prod^s_{i=1}q_i^{e_i}$ 
be the prime factorization of $m$ with $r,s,e_1, \dots ,e_s\ge 1$.
Since $\vphi(m)=2^{r-1} \prod^s_{i=1}q_i^{e_i-1}(q_i-1)$ and $\vphi(m)\mid 2n$,
we have $v_2(2n)\ge r-1+s$ and thus 
\begin{equation}
\label{pf}
r+s\le v_2(n)+2.
\end{equation}
Then we find 
$$
2n\ge \vphi(m)=m \left(1-\frac{1}{2}\right)\prod^s_{i=1}\left(1-\frac{1}{q_i}\right)
\ge m \prod^{s+1}_{i=1}\left(1-\frac{1}{p_i}\right)
\ge m \prod^{t}_{i=1}\left(1-\frac{1}{p_i}\right).
$$
This shows (2).  Furthermore, we have
\begin{align*}
\Phi(n) & \le 2n\prod^t_{i=1}\frac{p_i}{p_i-1}
=6n\prod^t_{i=3}\frac{p_i}{p_i-1}
\le 6n \left(\frac{5}{5-1}\right)^{v_2(n)} \\
& \le 6n \cdot   \left(\frac{5}{4}\right)^{\log_2(n)}
< 6n \cdot   2^{\frac{1}{3}\log_2(n)}.
\end{align*}
Thus we obtain (1).
Let us show (3).  From now on we assume that  $n>3$ is an odd prime.
Assume that $m\not=6$.
Since $n$ is odd, it follows from \eqref{pf} that the prime factorization of $m$ is 
of the form $m=2 q^e$ for some odd prime $q$. 
Then $\frac{1}{2}\vphi(m)=q^{e-1}\frac{q-1}{2}$ divides $n$.
Since $n>3$ is a prime and $m\not=6$, we find  $e=1$ and  $\frac{q-1}{2}=n$.
This implies  $2n+1$ must be prime and $m=4n+2$.
Now the result follows.
\end{proof}

Let us consider an upper bound  of $\Phi$ 
by using an "analytic" lower bound  function of $\vphi$ given by Rosser and Schoenfeld.  
If we denote by $\gamma$  the Euler's constant\footnote{
$\displaystyle \gamma=\int^{\infty}_{1}\left(\frac{1}{[x]}-\frac{1}{x} \right)  dx
=0.57721\cdots $. Note also 
$\displaystyle e^{\gamma}=1.78107\cdots$.}, 
it is shown in \cite[Theorem 15]{RS}  that 
we have\footnote{
More precisely, Theorem 15 of \cite{RS} states that 
\begin{equation}
\label{phi:lower}
\varphi(m)>\frac{m}{e^{\gamma}\log \log m+\frac{5}{2\log \log m}}
\end{equation}
for $m\ge 3$ 
except
when $m$ is 
the product of the first nine primes 
$m=223092870=2\cdot 3\cdot 5\cdot \cdot 7
\cdot 11\cdot 13\cdot 17\cdot 19\cdot 23$.}
\begin{equation*}
\varphi(m)>\frac{m}{e^{\gamma}\log \log m+\frac{3}{\log \log m}}
\end{equation*}
for $m\ge 3$.   
We set
$$
\Psi(n):=\mrm{Max}\{m\in \mbb{Z} \mid \mbox{$\varphi(m)\le 2n$} \}.
$$
We clearly have 
$\Phi(n)\le \Psi(n)$ for all $n>0$.
\begin{proposition}
\label{P:bound2}
For any real number $C>2e^{\gamma}$,
we have 
$$
\Psi(n)<Cn\log \log n
$$ 
for  any $n$ large enough.
\end{proposition}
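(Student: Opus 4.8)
The plan is to invert the Rosser--Schoenfeld lower bound for $\varphi$ in the obvious way; the only genuine point is to control the size of $\log\log m$ for $m=\Psi(n)$, which reduces to a crude a priori polynomial bound on $\Psi$.

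First I would record two trivial facts about $\Psi$. Since $\varphi(2^k)=2^{k-1}$, taking $k$ maximal with $2^{k-1}\le 2n$ gives $\Psi(n)\ge 2^k>2n$; in particular $\Psi(n)\to\infty$ and $\log\log\Psi(n)\ge\log\log n$ for $n\ge 3$. For the matching upper bound, set $m:=\Psi(n)$; for $n$ large we have $m\ge 16$, so $\log\log m\ge 1$, and the Rosser--Schoenfeld estimate $\varphi(m)>m\big/\big(e^{\gamma}\log\log m+\tfrac{3}{\log\log m}\big)$ (valid for all $m\ge 3$, as recalled above) gives $\varphi(m)>\dfrac{m}{(e^{\gamma}+3)\log\log m}>\dfrac{\sqrt m}{e^{\gamma}+3}$, using the trivial inequality $\log\log m\le\sqrt m$. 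Combined with $\varphi(m)\le 2n$ this yields $\sqrt m<2n(e^{\gamma}+3)$, hence $\Psi(n)=O(n^{2})$. Consequently $\log\log\Psi(n)=\log\log n+O(1)$, so
\[
\frac{\log\log\Psi(n)}{\log\log n}\longrightarrow 1 \qquad\text{and}\qquad \frac{1}{\log\log\Psi(n)}\longrightarrow 0 \qquad (n\to\infty).
\]

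Next I would simply feed $m=\Psi(n)$ into the Rosser--Schoenfeld inequality once more and use $\varphi(m)\le 2n$ to get
\[
\Psi(n)=m<2n\left(e^{\gamma}\log\log m+\frac{3}{\log\log m}\right).
\]
Dividing by $n\log\log n$ gives $\dfrac{\Psi(n)}{n\log\log n}<2e^{\gamma}\dfrac{\log\log m}{\log\log n}+\dfrac{6}{(\log\log m)(\log\log n)}$, whose right-hand side tends to $2e^{\gamma}$ by the two limits above. Hence $\limsup_{n\to\infty}\Psi(n)/(n\log\log n)\le 2e^{\gamma}$, and since $C>2e^{\gamma}$ we conclude $\Psi(n)<Cn\log\log n$ for all sufficiently large $n$.

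The argument is essentially routine given the Rosser--Schoenfeld bound; the one place that needs a little care is the a priori estimate $\Psi(n)=O(n^{2})$ (equivalently, any elementary bound of the form $\varphi(m)\gg\sqrt m$ would do here), since it is exactly this that forces $\log\log\Psi(n)$ and $\log\log n$ to be asymptotically equal and thereby lets the spurious $\log\log m$ be absorbed into $\log\log n$ in the final estimate. No information about $H$ or about the precise shape of $\Phi$ enters.
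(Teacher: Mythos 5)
Your proof is correct. Both you and the paper ultimately invert the Rosser--Schoenfeld lower bound $\varphi(m)>m/(e^{\gamma}\log\log m+3/\log\log m)$, but you resolve the one genuine difficulty --- that the bound involves $\log\log m$ with $m=\Psi(n)$ unknown, rather than $\log\log n$ --- in a different way. The paper argues by contradiction: assuming some $m\ge nf(n)$ with $f(x)=C\log\log x$ satisfies $\varphi(m)\le 2n$, it uses the monotonicity of $x/(\log\log x+1)$ to replace $m$ by $nf(n)$ inside the Rosser--Schoenfeld estimate and contradicts an explicit condition imposed on the threshold $N$. You instead bootstrap: a crude a priori estimate $\varphi(m)\gg\sqrt m$ gives $\Psi(n)=O(n^2)$, hence $\log\log\Psi(n)/\log\log n\to 1$, after which a direct $\limsup$ computation yields $\limsup_n\Psi(n)/(n\log\log n)\le 2e^{\gamma}<C$. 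Your two-step version is arguably cleaner and also makes explicit the finiteness of the set $\{m:\varphi(m)\le 2n\}$ (so that $\Psi(n)$ is well defined), which the paper leaves implicit. The trade-off is effectivity: the paper's formulation with explicit conditions (i), (ii) on $N$ is what the subsequent Remark exploits to extract the concrete threshold $n>e^{(1.001e)^9}$ for $C=4$; your purely asymptotic argument would need the $O(1)$ and $o(1)$ terms tracked explicitly to recover such a bound, though nothing obstructs doing so.
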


\begin{proof}
Put $f(x)=C\log \log x$.
Take any integer $N>0$ which satisfies the following properties: 
For all $x>N$, it holds 
\begin{itemize}
\item[(i)] $f(x)>\frac{1}{x}e^{e^2}$ and 
\item[(ii)] $f(x)> 2e^{\gamma} (\log \log (xf(x))+1)$.
\end{itemize}
(The assumption $C>2e^{\gamma}$ asserts the existence of such $N$.)
Take any integer $n>N$. 
It suffices to show that $n$ satisfies the desired inequality. 
Assume that there exists an integer $m$ such that 
both $\vphi(m)\le 2n$ and $m\ge nf(n)$ hold.
Since we have $e^{\gamma} >\frac{3}{\log \log x}$ for $x>e^{e^2}$
and $m(\ge nf(n))>e^{e^2}$,
we find
\begin{equation*}
\frac{1}{e^{\gamma}} \cdot \frac{m}{\log \log m + 1}< 
\frac{m}{e^{\gamma}\log \log m+\frac{3}{\log \log m}}<\varphi(m)\le 2n.
\end{equation*}
We also have $\frac{nf(n)}{\log \log (nf(n))+1} 
\le \frac{m}{\log \log m+1}$
since the function $\frac{x}{\log \log x+1}$ is strictly increasing for $x>e$
and $m\ge nf(n) (>e^{e^2})>e$.
Hence we obtain 
\begin{equation*}
\frac{1}{e^{\gamma}} \cdot \frac{nf(n)}{\log \log (nf(n))+1}< 2n,
\end{equation*}
which gives $f(n)< 2e^{\gamma} (\log \log (nf(n))+1)$.
This contradicts the condition (ii).
Therefore, we conclude that, if $\vphi(m)\le 2n$,
then it holds $m< nf(n)$.
This implies $\Psi(n)<nf(n)=Cn\log \log n$.
\end{proof}

\begin{remark}
Consider the case $C =4$. 
By studying  (i) and (ii) in the above proof more carefully, 
we can show 
$$
\Psi(n)<4n\log \log n
$$ 
for  any $n>e^{(1.001e)^9}$. 

Here we check the above inequality. 
The condition (ii) is equivalent to say that 
$$ 
(\log x)^{\frac{C}{2e^{\gamma}}-1}> e \left(1+\frac{\log(C \log \log x)}{\log x}\right).
$$
We assume $x>e^{e^9}$.
Since $\frac{C}{2e^{\gamma}}-1>\frac{4}{3.6}-1=\frac{1}{9}$
and $\frac{\log(C \log \log x)}{\log x}<0.001$, the inequality (ii) holds
if $(\log x)^{\frac{1}{9}}> 1.001e$, that is, $x>e^{(1.001e)^9}$.
Note that (i) clearly holds for such $x$.
\end{remark}

{\footnotesize
\begin{table}[htb]
\centering
\caption{$\Phi(n)$}
\label{List1}
  \begin{tabular}{|ll|ll|ll|}  \hline
$n$	&		\multicolumn{1}{c|}{$\Phi(n)$}					& $n$	&		\multicolumn{1}{c|}{$\Phi(n)$}				&	$n$	&		\multicolumn{1}{c|}{$\Phi(n)$}					\\\hline\hline	
1	&	$	2^1\cdot 3^1	$	&	41	&	$	2^1\cdot 83^1	$	&	81	&	$	2^1\cdot 3^5	$	\\	\hline
2	&	$	2^2\cdot 3^1	$	&	42	&	$	2^1\cdot 3^1\cdot 7^2	$	&	82	&	$	2^1\cdot 3^1\cdot 83^1	$	\\	\hline
3	&	$	2^1\cdot 3^2	$	&	43	&	$	2^1\cdot 3^1	$	&	83	&	$	2^1\cdot 167^1	$	\\	\hline
4	&	$	2^1\cdot 3^1\cdot 5^1	$	&	44	&	$	2^2\cdot 3^1\cdot 23^1	$	&	84	&	$	2^2\cdot 3^1\cdot 7^2	$	\\	\hline
5	&	$	2^1\cdot 11^1	$	&	45	&	$	2^1\cdot 31^1	$	&	85	&	$	2^1\cdot 11^1	$	\\	\hline
6	&	$	2^1\cdot 3^1\cdot 7^1	$	&	46	&	$	2^1\cdot 3^1\cdot 47^1	$	&	86	&	$	2^1\cdot 173^1	$	\\	\hline
7	&	$	2^1\cdot 3^1	$	&	47	&	$	2^1\cdot 3^1	$	&	87	&	$	2^1\cdot 59^1	$	\\	\hline
8	&	$	2^2\cdot 3^1\cdot 5^1	$	&	48	&	$	2^2\cdot 3^1\cdot 5^1\cdot 7^1	$	&	88	&	$	2^1\cdot 3^1\cdot 5^1\cdot 23^1	$	\\	\hline
9	&	$	2^1\cdot 3^3	$	&	49	&	$	2^1\cdot 3^1	$	&	89	&	$	2^1\cdot 179^1	$	\\	\hline
10	&	$	2^1\cdot 3^1\cdot 11^1	$	&	50	&	$	2^1\cdot 5^3	$	&	90	&	$	2^1\cdot 3^3\cdot 11^1	$	\\	\hline
11	&	$	2^1\cdot 23^1	$	&	51	&	$	2^1\cdot 103^1	$	&	91	&	$	2^1\cdot 3^1	$	\\	\hline
12	&	$	2^1\cdot 3^2\cdot 5^1	$	&	52	&	$	2^1\cdot 3^1\cdot 53^1	$	&	92	&	$	2^2\cdot 3^1\cdot 47^1	$	\\	\hline
13	&	$	2^1\cdot 3^1	$	&	53	&	$	2^1\cdot 107^1	$	&	93	&	$	2^1\cdot 3^2	$	\\	\hline
14	&	$	2^1\cdot 29^1	$	&	54	&	$	2^1\cdot 3^3\cdot 7^1	$	&	94	&	$	2^2\cdot 3^1	$	\\	\hline
15	&	$	2^1\cdot 31^1	$	&	55	&	$	2^1\cdot 11^2	$	&	95	&	$	2^1\cdot 191^1	$	\\	\hline
16	&	$	2^3\cdot 3^1\cdot 5^1	$	&	56	&	$	2^2\cdot 3^1\cdot 29^1	$	&	96	&	$	2^3\cdot 3^1\cdot 5^1\cdot 7^1	$	\\	\hline
17	&	$	2^1\cdot 3^1	$	&	57	&	$	2^1\cdot 3^2	$	&	97	&	$	2^1\cdot 3^1	$	\\	\hline
18	&	$	2^1\cdot 3^2\cdot 7^1	$	&	58	&	$	2^1\cdot 3^1\cdot 59^1	$	&	98	&	$	2^1\cdot 197^1	$	\\	\hline
19	&	$	2^1\cdot 3^1	$	&	59	&	$	2^1\cdot 3^1	$	&	99	&	$	2^1\cdot 199^1	$	\\	\hline
20	&	$	2^1\cdot 3^1\cdot 5^2	$	&	60	&	$	2^1\cdot 3^1\cdot 7^1\cdot 11^1	$	&	100	&	$	2^1\cdot 3^1\cdot 5^3	$	\\	\hline
21	&	$	2^1\cdot 7^2	$	&	61	&	$	2^1\cdot 3^1	$	&	101	&	$	2^1\cdot 3^1	$	\\	\hline
22	&	$	2^1\cdot 3^1\cdot 23^1	$	&	62	&	$	2^2\cdot 3^1	$	&	102	&	$	2^1\cdot 3^1\cdot 103^1	$	\\	\hline
23	&	$	2^1\cdot 47^1	$	&	63	&	$	2^1\cdot 127^1	$	&	103	&	$	2^1\cdot 3^1	$	\\	\hline
24	&	$	2^1\cdot 3^1\cdot 5^1\cdot 7^1	$	&	64	&	$	2^1\cdot 3^1\cdot 5^1\cdot 17^1	$	&	104	&	$	2^2\cdot 3^1\cdot 53^1	$	\\	\hline
25	&	$	2^1\cdot 11^1	$	&	65	&	$	2^1\cdot 131^1	$	&	105	&	$	2^1\cdot 211^1	$	\\	\hline
26	&	$	2^1\cdot 53^1	$	&	66	&	$	2^1\cdot 3^2\cdot 23^1	$	&	106	&	$	2^1\cdot 3^1\cdot 107^1	$	\\	\hline
27	&	$	2^1\cdot 3^4	$	&	67	&	$	2^1\cdot 3^1	$	&	107	&	$	2^1\cdot 3^1	$	\\	\hline
28	&	$	2^1\cdot 3^1\cdot 29^1	$	&	68	&	$	2^1\cdot 137^1	$	&	108	&	$	2^1\cdot 3^4\cdot 5^1	$	\\	\hline
29	&	$	2^1\cdot 59^1	$	&	69	&	$	2^1\cdot 139^1	$	&	109	&	$	2^1\cdot 3^1	$	\\	\hline
30	&	$	2^1\cdot 3^2\cdot 11^1	$	&	70	&	$	2^1\cdot 3^1\cdot 71^1	$	&	110	&	$	2^1\cdot 3^1\cdot 11^2	$	\\	\hline
31	&	$	2^1\cdot 3^1	$	&	71	&	$	2^1\cdot 3^1	$	&	111	&	$	2^1\cdot 223^1	$	\\	\hline
32	&	$	2^4\cdot 3^1\cdot 5^1	$	&	72	&	$	2^1\cdot 3^2\cdot 5^1\cdot 7^1	$	&	112	&	$	2^1\cdot 3^1\cdot 5^1\cdot 29^1	$	\\	\hline
33	&	$	2^1\cdot 67^1	$	&	73	&	$	2^1\cdot 3^1	$	&	113	&	$	2^1\cdot 227^1	$	\\	\hline
34	&	$	2^2\cdot 3^1	$	&	74	&	$	2^1\cdot 149^1	$	&	114	&	$	2^1\cdot 229^1	$	\\	\hline
35	&	$	2^1\cdot 71^1	$	&	75	&	$	2^1\cdot 151^1	$	&	115	&	$	2^1\cdot 47^1	$	\\	\hline
36	&	$	2^1\cdot 3^3\cdot 5^1	$	&	76	&	$	2^1\cdot 3^1\cdot 5^1	$	&	116	&	$	2^2\cdot 3^1\cdot 59^1	$	\\	\hline
37	&	$	2^1\cdot 3^1	$	&	77	&	$	2^1\cdot 23^1	$	&	117	&	$	2^1\cdot 79^1	$	\\	\hline
38	&	$	2^2\cdot 3^1	$	&	78	&	$	2^1\cdot 3^1\cdot 79^1	$	&	118	&	$	2^2\cdot 3^1	$	\\	\hline
39	&	$	2^1\cdot 79^1	$	&	79	&	$	2^1\cdot 3^1	$	&	119	&	$	2^1\cdot 239^1	$	\\	\hline
40	&	$	2^1\cdot 3^1\cdot 5^1\cdot 11^1	$	&	80	&	$	2^2\cdot 3^1\cdot 5^1\cdot 11^1	$	&	120	&	$	2^1\cdot 3^1\cdot 5^2\cdot 7^1	$	\\	\hline
		\end{tabular} 
\end{table}
}
\clearpage

{\footnotesize
\begin{table}[htb]
\centering
\caption{$H(n)$}
\label{List2}
  \begin{tabular}{|l l|}  \hline
$n$	&	\multicolumn{1}{c|}{$H(n)$}			\\\hline\hline	
1	&	$	2^4\cdot 3^1	$	\\	\hline
2	&	$	2^8\cdot 3^2\cdot 5^1	$	\\	\hline
3	&	$	2^{11}\cdot 3^4\cdot 5^1\cdot 7^1	$	\\	\hline
4	&	$	2^{16}\cdot 3^5\cdot 5^2\cdot 7^1	$	\\	\hline
5	&	$	2^{19}\cdot 3^6\cdot 5^2\cdot 7^1\cdot 11^1	$	\\	\hline
6	&	$	2^{23}\cdot 3^8\cdot 5^3\cdot 7^2\cdot 11^1\cdot 13^1	$	\\	\hline
7	&	$	2^{26}\cdot 3^9\cdot 5^3\cdot 7^2\cdot 11^1\cdot 13^1	$	\\	\hline
8	&	$	2^{32}\cdot 3^{10}\cdot 5^4\cdot 7^2\cdot 11^1\cdot 13^1\cdot 17^1	$	\\	\hline
9	&	$	2^{35}\cdot 3^{13}\cdot 5^4\cdot 7^3\cdot 11^1\cdot 13^1\cdot 17^1\cdot 19^1	$	\\	\hline
10	&	$	2^{39}\cdot 3^{14}\cdot 5^6\cdot 7^3\cdot 11^2\cdot 13^1\cdot 17^1\cdot 19^1	$	\\	\hline
11	&	$	2^{42}\cdot 3^{15}\cdot 5^6\cdot 7^3\cdot 11^2\cdot 13^1\cdot 17^1\cdot 19^1\cdot 23^1	$	\\	\hline
12	&	$	2^{47}\cdot 3^{17}\cdot 5^7\cdot 7^4\cdot 11^2\cdot 13^2\cdot 17^1\cdot 19^1\cdot 23^1	$	\\	\hline
13	&	$	2^{50}\cdot 3^{18}\cdot 5^7\cdot 7^4\cdot 11^2\cdot 13^2\cdot 17^1\cdot 19^1\cdot 23^1	$	\\	\hline
14	&	$	2^{54}\cdot 3^{19}\cdot 5^8\cdot 7^4\cdot 11^2\cdot 13^2\cdot 17^1\cdot 19^1\cdot 23^1\cdot 29^1	$	\\	\hline
15	&	$	2^{57}\cdot 3^{21}\cdot 5^8\cdot 7^5\cdot 11^3\cdot 13^2\cdot 17^1\cdot 19^1\cdot 23^1\cdot 29^1\cdot 31^1	$	\\	\hline
16	&	$	2^{64}\cdot 3^{22}\cdot 5^9\cdot 7^5\cdot 11^3\cdot 13^2\cdot 17^2\cdot 19^1\cdot 23^1\cdot 29^1\cdot 31^1	$	\\	\hline
17	&	$	2^{67}\cdot 3^{23}\cdot 5^9\cdot 7^5\cdot 11^3\cdot 13^2\cdot 17^2\cdot 19^1\cdot 23^1\cdot 29^1\cdot 31^1	$	\\	\hline
18	&	$	2^{71}\cdot 3^{26}\cdot 5^{10}\cdot 7^6\cdot 11^3\cdot 13^3\cdot 17^2\cdot 19^2\cdot 23^1\cdot 29^1\cdot 31^1\cdot 37^1	$	\\	\hline
19	&	$	2^{74}\cdot 3^{27}\cdot 5^{10}\cdot 7^6\cdot 11^3\cdot 13^3\cdot 17^2\cdot 19^2\cdot 23^1\cdot 29^1\cdot 31^1\cdot 37^1	$	\\	\hline
20	&	$	2^{79}\cdot 3^{28}\cdot 5^{12}\cdot 7^6\cdot 11^4\cdot 13^3\cdot 17^2\cdot 19^2\cdot 23^1\cdot 29^1\cdot 31^1\cdot 37^1\cdot 41^1	$	\\	\hline
21	&	$	2^{82}\cdot 3^{30}\cdot 5^{12}\cdot 7^8\cdot 11^4\cdot 13^3\cdot 17^2\cdot 19^2\cdot 23^1\cdot 29^1\cdot 31^1\cdot 37^1\cdot 41^1\cdot 43^1	$	\\	\hline
22	&	$	2^{86}\cdot 3^{31}\cdot 5^{13}\cdot 7^8\cdot 11^4\cdot 13^3\cdot 17^2\cdot 19^2\cdot 23^2\cdot 29^1\cdot 31^1\cdot 37^1\cdot 41^1\cdot 43^1	$	\\	\hline
23	&	$	2^{89}\cdot 3^{32}\cdot 5^{13}\cdot 7^8\cdot 11^4\cdot 13^3\cdot 17^2\cdot 19^2\cdot 23^2\cdot 29^1\cdot 31^1\cdot 37^1\cdot 41^1\cdot 43^1\cdot 47^1	$	\\	\hline
24	&	$	2^{95}\cdot 3^{34}\cdot 5^{14}\cdot 7^9\cdot 11^4\cdot 13^4\cdot 17^3\cdot 19^2\cdot 23^2\cdot 29^1\cdot 31^1\cdot 37^1\cdot 41^1\cdot 43^1\cdot 47^1	$	\\	\hline
25	&	$	2^{98}\cdot 3^{35}\cdot 5^{14}\cdot 7^9\cdot 11^5\cdot 13^4\cdot 17^3\cdot 19^2\cdot 23^2\cdot 29^1\cdot 31^1\cdot 37^1\cdot 41^1\cdot 43^1\cdot 47^1	$	\\	\hline
		\end{tabular} 
\end{table}
}

{\footnotesize
\begin{table}[htb]
\centering
\caption{$G(n)$ (for $p\not= 3$)} 
\label{List3}
  \begin{tabular}{|l l|}  \hline
$n$	&	\multicolumn{1}{c|}{$G(n)$}			\\\hline\hline	
1	&	$	2^4\cdot 3^1	$	\\	\hline
2	&	$	2^9\cdot 3^6\cdot 5^1\cdot 13^1	$	\\	\hline
3	&	$	2^{13}\cdot 3^{15}\cdot 5^1\cdot 7^1\cdot 11^2\cdot 13^2	$	\\	\hline
4	&	$	2^{19}\cdot 3^{28}\cdot 5^2\cdot 7^1\cdot 11^2\cdot 13^2\cdot 41^1\cdot 1093^1	$	\\	\hline
5	&	$	2^{23}\cdot 3^{45}\cdot 5^2\cdot 7^1\cdot 11^4\cdot 13^3\cdot 41^1\cdot 61^1\cdot 757^1\cdot 1093^1	$	\\	\hline
6	&	$	2^{28}\cdot 3^{66}\cdot 5^3\cdot 7^2\cdot 11^4\cdot 13^4\cdot 23^1\cdot 41^1\cdot 61^1\cdot 73^1\cdot 757^1\cdot 1093^1\cdot 3851^1	$	\\	\hline
7	&	$	2^{32}\cdot 3^{91}\cdot 5^3\cdot 7^2\cdot 11^4\cdot 13^4\cdot 23^1\cdot 41^1\cdot 61^1\cdot 73^1\cdot 547^1\cdot 757^1\cdot 1093^2\cdot 3851^1\cdot 797161^1	$	\\	\hline
		\end{tabular} 
\end{table}
}
\clearpage

\end{document}